\newcolumntype{P}[1]{>{\centering\arraybackslash}p{#1}}
\newcolumntype{M}[1]{>{\centering\arraybackslash}m{#1}}
\newtheorem{theorem}{Theorem}[section]
\newtheorem{proposition}[theorem]{Proposition}
\newtheorem{lemma}[theorem]{Lemma}
\theoremstyle{definition}
\newtheorem{example}[theorem]{Example}
\newtheorem{remark}[theorem]{Remark}
\newtheorem{notation}[theorem]{Notation}
\newcommand{\PP}{\mathbb{P}}
\newcommand{\CC}{\mathbb{C}}
\newcommand{\FF}{\mathbb{F}}
\newcommand{\ZZ}{\mathbb{Z}}
\newcommand{\cO}{\mathcal{O} }
\newcommand{\cC}{\mathcal{C} }
\newcommand{\cE}{\mathcal{E} }
\newcommand{\cF}{\mathcal{F} }
\newcommand{\cH}{\mathcal{H} }
\newcommand{\cI}{\mathcal{I} }
\newcommand{\cK}{\mathcal{K} }
\newcommand{\cM}{\mathcal{M} }
\newcommand{\cR}{\mathcal{R} }
\newcommand{\cS}{\mathcal{S} }
\newcommand{\cU}{\mathcal{U} }
\newcommand{\enm}[1]{\ensuremath{#1}}
\newcommand{\cal}[1]{\mathcal{#1}}
\renewcommand{\bar}[1]{\overline{#1}}
\newcommand{\Ee}{\enm{\cal{E}}}
\newcommand{\Oo}{\enm{\cal{O}}}
\newcommand{\Rr}{\enm{\cal{R}}}
\newcommand{\rH}{\mathrm{H} }
\newcommand\bH{\mathbf{H}}
\newcommand\bK{\mathbf{K}}
\newcommand\bM{\mathbf{M}}
\newcommand\dual{^{\vee}}
\newcommand{\fsl}{\mathfrak{sl}}
\def\Sym{\mathrm{Sym} }
\def\Hom{\mathrm{Hom} }
\def\Ext{\mathrm{Ext} }
\def\Gr{\mathrm{Gr} }
\def\SL{\mathrm{SL}}
\def\lr{\rightarrow}
\newcommand{\rank}{\mathrm{rank}\, }
\newcommand{\ses}[3]{0\lr{#1}\lr{#2}\lr{#3}\lr 0}
\begin{document}

\title{Rational quartic curves in the Mukai-Umemura variety}
\date{\today}

\author{Kiryong Chung}
\address{Department of Mathematics Education, Kyungpook National University, 80 Daehakro, Bukgu, Daegu 41566, Korea}
\email{krchung@knu.ac.kr}

\author{Jaehyun Kim}
\address{Department of Mathematics, Ewha Womans University, 52 Ewhayeodae-gil, Seodaemun-gu, Seoul, Korea}
\email{kjh6691@ewha.ac.kr}

\author{Jeong-Seop Kim}
\address{School of Mathematics, Korea Institute for Advanced Study, 85 Hoegiro Dongdaemun-gu, Seoul 02455, Republic of Korea}
\email{jeongseop@kias.re.kr}

\keywords{Mukai-Umemura variety, Rational curve, Torus fixed curve, Poincar\'e polynomial}
\subjclass[2020]{14E05, 14E15, 14M15.}

\begin{abstract}
Let $X$ be the Fano threefold of index one, degree $22$, and $\mathrm{Pic}(X)\cong\mathbb{Z}$. Such a threefold $X$ can be realized by a regular zero section $\mathbf{s}$ of $(\bigwedge^2\mathcal{F}^{*})^{\oplus 3}$ over Grassmannian variety $\mathrm{Gr}(3,V)$, $\dim V=7$ with the universal subbundle $\mathcal{F}$. When the section $\mathbf{s}$ is given by the net of the $\mathrm{SL}_2$-invariant skew forms, we call it by the Mukai-Umemura (MU) variety. In this paper, we prove that the Hilbert scheme of rational quartic curves in the MU-variety is smooth and compute its Poincar\'e polynomial by applying the Bia{\l}ynicki-Birula's theorem.
\end{abstract}

\maketitle

\section{Motivation and results}
We work on the field $\CC$ of complex numbers.

\subsection{Motivation}
Let $X$ be the Fano threefold of index one, degree $22$, and $\text{Pic}(X)\cong\mathbb{Z}$. It is one of the minimal compactifications of the space $\CC^3$ such that the second Betti number is one (Hirzebruch’s question). While the moduli spaces of others (the proejctive space $\PP^3$, quadric threefold $Q_3$, and quintic del-Pezzo threefold $V_5$) are trivial, the variety $X$ has a six-dimensional moduli space. Among the automorphism groups of the varieties $X$, the largest dimensional one is $\mathrm{Aut}(X)\cong \mathrm{PGL}_2(\CC)$.
In this case, the variety $X$ has an $\SL_2$-representation theoretic description. In fact, through the result of Mukai and Umemura (\cite{MU83}), the variety $X$ fills up the missing one of Iskovskikh's classification (\cite{IP99}). On the other hand, it is very well-known that the threefold $X$ has been studied in various aspects: Derived category of algebraic varieties, Birational geometry, and K-stability of manifolds (\cite{Kuz97, KPS18, Don08}).

In the previous work of the first named author, the moduli space of rational curves was investigated in the spaces: $\PP^3$, $Q_3$, and $V_5$ (\cite{CK11, CCM16, Chu22, CHY21}).
He studied the birational relation among various compactifications of rational curves of low degrees in those spaces. To do such comparisons, it is a starting point to understand the geometry of the compactifications. So, in this paper, we will explicitly compute the weights of deformation space of the rational curves in $X$ to apply Bia{\l}ynicki-Birula's theorem (Section \ref{bbthm}). For such a purpose, we will use the descriptions of $X$ in terms of sub-locus of twisted cubics in $\PP^3$ as an $\SL_2$-equivariant setting which has been well-studied in \cite{Kuz97} and \cite{Sch91}.

\subsection{Results}
Let us fix a projective embedding $X\subset \PP^r$. Let $\bH_d(X)$ be the Hilbert scheme parameterizing curves $C$ in $X$ with Hilbert polynomial $\chi(\cO_C(m))=dm+1$. When $X$ is the MU-variety, the Hilbert scheme $\bH_1(X)$ is isomorphic to a non-reduced plane curve (Proposition \ref{linesinx}). One interesting point is that any two different lines in $X$ are disjoint. Hence the Hilbert scheme $\bH_2(X)$ of conics only parameterizes smooth conics or double lines (Proposition \ref{conicsinx}). For the detailed description of lines and conics for general $X$, see Section \ref{lowrat}. In this paper, we confirm the Hilbert scheme $\bH_3(X)$ of twisted cubics is smooth (Proposition \ref{h3sm}) and find the generators of the deformation space of a twisted cubics in $X$ (Section \ref{31}). It will help to study the moduli points of the space $\bH_4(X)$ (Section \ref{trofixpoin}). On the other hand, by the result of Kuznetsov (\cite{Kuz97}), there exists a flop diagram among two $\PP^1$-bundle over $\PP^3$ and $X$ respectively (Proposition \ref{flopd}). Using this diagram, we find the torus fixed \emph{irreducible} rational quartic curves in $X$. Since the reducible curve parameterized by $\bH_4(X)$ is a union of lower degree curves, it is not hard to find the torus fixed curve. Lastly, for the non-reduced curve, we will use the general result about a multiple structure of a Cohen-Macaulay curve in a smooth projective threefold. See the paper \cite{BF87} and \cite{NS03} for the general result and the computation in the case of the projective space $\PP^3$. Unlike the case $\PP^3$, it is known that the MU-variety does not contain any elliptic quartic curves (\cite[Corollary 3.9]{CFK23}). Hence we can conclude that all curves of degree $4$ in $X$ are Cohen-Macaulay (Proposition \ref{cucm}). After all, we have the following result.
\begin{theorem}[\protect{Proposition \ref{mainpr} }]\label{mainthm}
Let $X$ be the Mukai-Umemura threefold and $\bH_4(X)$ be the Hilbert scheme of degree $4$ and genus $0$ curves in $X$. Then 
\begin{enumerate}
\item $\bH_4(X)$ is smooth, $4$-dimensional variety and
\item the Poincar\'e polynomial of $\bH_4(X)$ is given by $P(\bH_4(X))=1+p+2p^2+p^3+p^4$.
\end{enumerate}
\end{theorem}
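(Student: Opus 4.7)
The plan is to apply the Bia{\l}ynicki-Birula theorem to the action of a maximal torus $T\subset\SL_2$ on $\bH_4(X)$ induced by the natural $\SL_2$-action on $X$. First I would enumerate the $T$-fixed locus $\bH_4(X)^T$. By Proposition \ref{cucm} every quartic rational curve in $X$ is Cohen-Macaulay, so each $T$-fixed point falls into one of three classes: (a) an irreducible rational quartic, (b) a reducible reduced curve assembled from $T$-fixed rational curves of lower degree, or (c) a non-reduced multiple structure on a $T$-fixed line or conic. For class (a) I would exploit the $\SL_2$-equivariant flop diagram of Proposition \ref{flopd} to transfer the enumeration to the $\PP^1$-bundle over $\PP^3$, where $T$-fixed rational curves are easier to list. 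For class (b) I would combine the explicit descriptions of lines (Proposition \ref{linesinx}), conics (Proposition \ref{conicsinx}), and twisted cubics (Section \ref{31}) in $X$ with the connectedness and arithmetic genus~$0$ conditions to obtain the finite list of admissible incidence configurations. For class (c) I would invoke the structure theorem of \cite{BF87, NS03} on Cohen-Macaulay multiple structures in a smooth threefold, specialize to the $T$-fixed lines and conics in $X$, and select the $T$-fixed multiple structures of degree $4$ and arithmetic genus $0$.

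For each fixed point $[C]\in\bH_4(X)$ I would then compute the normal sheaf $N_{C/X}$ together with the induced $T$-module structure on $H^0(N_{C/X})$. The vanishings $H^1(N_{C/X})=0$ and $\dim H^0(N_{C/X})=4$ at every fixed point imply smoothness and the dimension claim of part (1), since smoothness away from the fixed locus follows by $T$-equivariance. For part (2), I would choose a generic cocharacter $\lambda\colon\mathbb{G}_m\to T$, count the number $d(C)$ of $\lambda$-positive weights on $H^0(N_{C/X})$ at each fixed point, and identify $d(C)$ with the dimension of the corresponding Bia{\l}ynicki-Birula cell. The Poincar\'e polynomial would then be $\sum_{[C]\in\bH_4(X)^T}p^{d(C)}$.

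The main obstacle will be class (c): parameterizing all Cohen-Macaulay multiple structures of degree $4$ and arithmetic genus $0$ on a $T$-fixed line or conic inside $X$, isolating those that are themselves $T$-fixed, and determining their normal sheaves with exact $T$-weights. A secondary difficulty in class (b) is verifying that the deformations smoothing a reducible configuration stay inside $X$ rather than merely inside the ambient Grassmannian, which reduces to an $\Ext^1$ comparison between normal bundles twisted by the defining sections. Finally, the target polynomial $1+p+2p^2+p^3+p^4$ is symmetric, consistent with Poincar\'e duality on a smooth $4$-fold with total Betti number $6$, providing a useful sanity check on the total count of $T$-fixed points.
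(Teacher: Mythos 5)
Your proposal follows essentially the same route as the paper: classify the $\CC^*$-fixed quartics into irreducible, reducible reduced, and non-reduced Cohen--Macaulay curves (using Proposition \ref{cucm} and the flop diagram of Proposition \ref{flopd} for the irreducible case, and the multiple-structure theory of \cite{BF87, NS03} for the non-reduced case), then compute the $\CC^*$-weights on $H^0(N_{C/X})$ at each fixed point and apply Bia{\l}ynicki-Birula. The paper carries this out via explicit local charts and Macaulay2 computations, arriving at six fixed points with cell dimensions $0,1,2,2,3,4$, exactly as your outline anticipates.
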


\subsection{Comments on Theorem \ref{mainthm}}
As mentioned in \cite[Remark 10]{Kuz97}, the space $\bH_4(X)$ is birational to the Grassmannian variety $\Gr(2,4)$. Our result may provide a clue to concretely describe the explicit global description of the Hilbert scheme $\bH_4(X)$. Also, in \cite[Proposition 4.12]{CS23}, the authors found $\CC^*\rtimes \ZZ_2$-fixed irreducible rational curves of degree $d\leq 12$ in $X$. On the other hand, from \cite[Corollary 3.9]{CFK23}, one can expect that BPS-invariant of genus $1$ over $X$ is $n_{1,0}=0$. So it may be a direct computation of the Donaldson-Thomas invariants on this local Calabi-Yau $4$-fold as did in \cite{CLW21}. To establish this one, we need to find a global description of $\bH_4(X)$ and its birational relation with the variety $\Gr(2,4)$. One of the natural approaches is to use the \emph{Sarkisov link} among $X$ and quintic del-Pezzo threefold $V_5$ (\cite{KPS18}).

\subsection{Streams of paper}
In Section 2, we review well-known results of the construction of $X$ or rational curves of degree $d\leq 3$ in $X$. In Section 3, we find a local chart of $X$ as a subvariety of Grassmannian variety and compute the $\CC^*$-fixed irreducible curve of degree $d=4$. In Section 4, with the help of Macaulay2 (\cite{M2}), we find the deformation space of quartic curves in the MU-variety and compute the Poincar\'e polynomial of $\bH_4(X)$. 

\subsection*{Notations and conventions}
\begin{itemize}
\item Let us denote by $\Gr(k,n)$ the Grassmannian variety parameterizing $k$-dimensional subspaces in a fixed vector space $V$ with $\dim V=n$.
\item When no confusion can arise, we sometimes do not distinguish the moduli point $[x]\in \cM$ from the object $x$ parameterized by $[x]$.
\item The set of fixed points of $X$ is denoted by $X^{\CC^*}$ under the $\CC^*$-action.
\end{itemize}

\subsection*{Acknowledgements}
The authors gratefully acknowledge the many helpful suggestions and comments of Jinwon Choi, Atanas Iliev, Shigeru Mukai, and Kyeong-Dong Park during the preparation of the paper.

\section{Preliminary}

\subsection{Rational curves of degree $\leq 2$ in $X$}\label{lowrat}
In this subsection, we recall the known result about the rational curves in the Fano threefold $X$ of index one, degree $22$, and $\text{Pic}(X)\cong\mathbb{Z}$.

\begin{proposition}[\protect{\cite[Corollary 5.2.14]{IP99}}]\label{linesinx}
The normal bundle of a line $L$ in $X$ is isomorphic to
\[
 N_{L/X}\cong \cO_L\oplus \cO_L(-1) \;\text{or}\;\cO_L(1)\oplus \cO_L(-2).
\]
Furthermore, 
\begin{enumerate}
\item for general $X$, the normal bundle of each line in $X$ is of the first type and $\bH_1(X)$ is isomorphic to a smooth plane quartic curve.
\item When $X$ is the MU-variety, the normal bundle of each line in $X$ is of the second type and $\bH_1(X)$ is isomorphic to a planar double conic (i.e., the square of a smooth conic).
\end{enumerate}
\end{proposition}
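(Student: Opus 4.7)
Since the statement is attributed to \cite[Corollary 5.2.14]{IP99}, the plan is to reconstruct the key steps rather than grind through the full classification of lines on index-one Fano threefolds. First I would pin down the degree of $N_{L/X}$ by adjunction: as $X$ has index one, $-K_X\cdot L=1$, and $\deg T_{\PP^1}=2$, so $\deg N_{L/X}=-1$. By Grothendieck's splitting theorem on $\PP^1$, write $N_{L/X}\cong \cO_L(a)\oplus \cO_L(-1-a)$ with $a\geq 0$; the numerical task reduces to showing $a\in\{0,1\}$.

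To exclude $a\geq 2$, I would argue that $X$ is not covered by a positive-dimensional family of lines through any fixed point (a standard consequence of $\mathrm{Pic}(X)\cong\ZZ$ together with $X$ containing no quadric surface), so the locus of lines through a general point of $L$ is finite; this forces $h^0(N_{L/X})\leq 2$, whence $a\leq 1$. The resulting dichotomy reads cleanly at the level of the Hilbert scheme: $a=0$ gives $h^0=1$, $h^1=0$, so $[L]$ is a smooth one-dimensional point of $\bH_1(X)$, while $a=1$ gives $h^0=2$, $h^1=1$, so $[L]$ is necessarily non-reduced.

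For part (1), I would invoke the classical Iskovskikh--Mukai description of a general $V_{22}$: its Hilbert scheme of lines is known to be a smooth plane quartic in the natural $\PP^2$-embedding coming from the conic bundle (or equivalently the two-ray game) on $V_{22}$, so $h^0(N_{L/X})=1$ at every point and the first split type occurs throughout. Smoothness of the quartic is equivalent to the vanishing of $h^1(N_{L/X})$ at each moduli point, which is immediate for $\cO_L\oplus\cO_L(-1)$.

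For part (2), the extra ingredient is the $\SL_2$-equivariance of the MU variety. The ambient $\PP^2$ of $\bH_1(X)$ is naturally the projectivization of the unique three-dimensional irreducible $\SL_2$-representation $\mathrm{Sym}^2(\CC^2)$, in which the only $\SL_2$-invariant curve of degree at most $2$ is the smooth conic $C$ of rank-one quadratic forms. Hence the reduced scheme $\bH_1(X)_\mathrm{red}\subset\PP^2$ must be supported on $C$, and since $\bH_1(X)$ has degree $4$, the scheme equals the thickening $2C$. Consequently every line $L\subset X$ is a non-reduced point of $\bH_1(X)$, which by the dichotomy above forces $N_{L/X}\cong \cO_L(1)\oplus \cO_L(-2)$ uniformly. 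The most delicate step is verifying that the thickening stops at $2C$ rather than $3C$ or $4C$; I would confirm this by an explicit obstruction computation on the $\SL_2$-invariant deformation space using the Kuznetsov description recalled in the introduction, precisely the sort of infinitesimal analysis exploited later in the paper.
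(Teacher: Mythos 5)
The paper gives no argument for this proposition at all: it is quoted from \cite[Corollary 5.2.14]{IP99}, and the only place the double conic is actually exhibited is Proposition \ref{lacx}, where $\bH_1(X)\cong\{(y_2y_{-2}+\tfrac14y_0^2)^2=0\}$ is read off from the Pfaffians of the explicit $\SL_2$-invariant net of alternating forms. So any reconstruction is necessarily a different route; yours follows the standard line of the literature, but one step has a genuine gap. You exclude $a\geq 2$ by arguing that finiteness of the set of lines through a general point $p\in L$ forces $h^0(N_{L/X})\leq 2$. It does not: the relevant tangent space is $\rH^0(N_{L/X}(-p))$, of dimension $a$ when $N_{L/X}\cong\cO_L(a)\oplus\cO_L(-1-a)$, and a finite scheme can have a large Zariski tangent space (obstructed deformations), so finiteness of the actual family bounds the dimension, not $h^0$. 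The standard fix is cleaner anyway: in the anticanonical embedding $X\subset\PP^{13}$ a line of $X$ is a genuine line, hence $N_{L/X}$ is a subbundle of $N_{L/\PP^{13}}\cong\cO_L(1)^{\oplus 12}$, every line subbundle of which has degree at most $1$; combined with $\deg N_{L/X}=-1$ this gives exactly the stated dichotomy.

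Beyond that, part (1) is circular as written --- you deduce the splitting type from smoothness of the plane quartic, but that smoothness is normally \emph{established} by showing $h^0(N_{L/X})=1$ --- so in the end you, like the paper, are simply citing the classification. Your representation-theoretic argument for (2) is correct and attractive: $\Sym^d(\Sym^2\CC^2)$ contains a trivial summand only for even $d$, so the unique $\SL_2$-invariant quartic in $\PP(\Sym^2\CC^2)$ is the square of the invariant conic, and non-reducedness of $\bH_1(X)$ at every point gives $h^0(N_{L/X})=2$, forcing the second type. But it presupposes the nontrivial input that $\bH_1(X)$ is a quartic divisor in a $\PP^2$ carrying the $\Sym^2\CC^2$-action (again the content of the cited result), and once the degree is known to be $4$ your closing worry about the thickening being $3C$ or $4C$ is vacuous, since those have degrees $6$ and $8$. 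The paper's own verification of the double conic, by contrast, is the explicit apolarity/Pfaffian computation of Proposition \ref{lacx} following \cite{Sch01, Fae14}.
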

Let us define the \emph{planar double line} $L^2$ as the non-split extension sheaf $F$ ($\cong\cO_{L^2}$)
\[
\ses{\cO_L(-1)}{F}{\cO_L},
\]
where $L$ is a line. A double line $L^2$ in $X$ supported on $L$ is classified by $\Ext^1(\cO_L, \cO_L(-1))\cong \rH^0(N_{L/Y}(-1))$.
\begin{proposition}[\protect{\cite[Remark 5.2.15]{IP99}}]\label{conicsinx}
The Hilbert scheme $\bH_2(X)$ of conics is isomorphic to $\bH_2(X)\cong \PP(W_2)$, $\dim W_2=3$. For the MU-variety $X$, the vector space $W_2$ is isomorphic to the standard $\SL_2$-representation $W_2\cong \Sym^2\CC^2$ . Also, let $Q\subset \bH_2(X)$ be the $\CC^*$-invariant smooth conic. Then each point $[C]$ in $\bH_2(X)\setminus Q$ parameterizes a smooth conic $C$ with normal bundle $N_{C|X}=\cO_C\oplus \cO_C$. Each point $[C]\in Q$ parameterizes a planar double line $C=L^2$ in $X$.
\end{proposition}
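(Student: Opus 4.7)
The plan is to establish the four assertions separately and deduce them from (i) the standard dimension/smoothness analysis of $\bH_2(X)$ for any member of the family, (ii) $\SL_2$-equivariance in the Mukai-Umemura case, and (iii) comparison with Proposition \ref{linesinx}.

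First, I would show that $\bH_2(X)$ is a smooth surface that is abstractly $\PP^2$ for every Fano threefold $X$ of index one, degree $22$, $\Pic(X)=\ZZ$. By adjunction, a smooth conic $C\subset X$ has $\deg N_{C/X}=0$, so $N_{C/X}=\cO_C(a)\oplus\cO_C(-a)$ for some $a\geq 0$, and for the generic member one has $h^0(N_{C/X})=2$, $h^1=0$. This gives smoothness of $\bH_2(X)$ at a general point and computes the dimension as $2$. That the whole component is $\PP^2$ is the classical statement in \cite[Ch.~5]{IP99}, which follows from the Grassmannian description: writing $X\subset\Gr(3,V)$ as a zero locus of $(\bigwedge^2\cF^*)^{\oplus 3}$, a smooth conic on $X$ extends to a $\sigma_{2,1}$-line in $\Gr(3,V)$, and the resulting projective embedding identifies $\bH_2(X)$ with a $\PP(W_2)$, $\dim W_2=3$.

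Second, I would identify $W_2$ as an $\SL_2$-module when $X$ is the Mukai-Umemura variety. The group $\Aut(X)=\PGL_2(\CC)$ acts on $\bH_2(X)=\PP(W_2)=\PP^2$, so $W_2$ is a three-dimensional $\SL_2$-representation defined up to a character. Since $\PGL_2$ acts effectively on $X$ and therefore on $\bH_2(X)$, the representation cannot be trivial; the only faithful three-dimensional $\SL_2$-representation (up to twist) is the irreducible one $\Sym^2\CC^2$, so $W_2\cong\Sym^2\CC^2$.

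Third, I would pin down the locus $Q$. Under the $\SL_2$-action on $\PP(\Sym^2\CC^2)$, the unique invariant curve is the discriminant conic, i.e.\ the image of the $2$-uple Veronese $\PP^1\hookrightarrow\PP(\Sym^2\CC^2)$ parameterizing squares of linear forms; call it $Q$. It is automatically $\CC^*$-invariant for the diagonal $\CC^*\subset\SL_2$. Now I compare with Proposition \ref{linesinx}: in the MU case every line $L\subset X$ has $N_{L/X}=\cO_L(1)\oplus\cO_L(-2)$, so the space of non-split extensions $0\to\cO_L(-1)\to F\to\cO_L\to 0$ is parameterized by $\Ext^1(\cO_L,\cO_L(-1))=\rH^0(N_{L/X}(-1))=\rH^0(\cO_L\oplus\cO_L(-3))=\CC$, giving exactly one planar double line on each $L$. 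Since the reduced locus of $\bH_1(X)$ is a smooth conic $\cong\PP^1$, the resulting family of double lines sweeps out a $\PP^1$-family inside $\bH_2(X)$; being $\SL_2$-invariant and irreducible of dimension $1$, it must equal $Q$. This identifies every point of $Q$ with a planar double line.

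Finally, for the normal bundles off $Q$: for any smooth conic $C$ write $N_{C/X}=\cO_C(a)\oplus\cO_C(-a)$, $a\geq 0$. The locus $Z:=\{[C]\in\bH_2(X):a\geq 1\}$ is closed by semicontinuity and $\SL_2$-invariant. If $Z=\bH_2(X)$, then $h^0(N_{C/X})\geq 3$ everywhere, contradicting the smoothness and $2$-dimensionality of $\bH_2(X)$ at a general point. Hence $Z$ is a proper closed invariant subset of $\PP(\Sym^2\CC^2)$, so it is either a finite $\SL_2$-invariant set (which must be empty, as $\SL_2$ has no fixed points on $\PP(\Sym^2\CC^2)$) or contained in the unique invariant curve $Q$. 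Combining with the previous step, every $[C]\notin Q$ is a smooth conic with $N_{C/X}=\cO_C\oplus\cO_C$, and the proposition follows.

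The main obstacle I anticipate is the global statement $\bH_2(X)\cong\PP(W_2)$ (as opposed to smoothness plus two-dimensionality at a generic point): I would rely on the Grassmannian embedding of $X$, since extending the analysis to non-reduced conics in $X$ requires checking that double lines deform inside $\bH_2(X)$ with the correct tangent directions, which one handles by computing $\Ext^1(\cO_{L^2},\cO_{L^2})$ via the defining extension and relating it to the unique tangent direction at $[L^2]\in Q$ picked out by $\SL_2$-equivariance.
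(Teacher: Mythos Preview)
The paper does not supply its own proof of this proposition; it is quoted from \cite[Remark 5.2.15]{IP99}, and the explicit identification $\bH_2(X)\cong\PP(N)$ in the MU case is later cited again in Proposition \ref{lacx} from \cite{Sch01, Fae14}. So there is no argument in the paper to compare against beyond the citation.

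Your sketch is a sound independent route, and the logic of the $\SL_2$-equivariance steps is correct. A few points deserve tightening. First, in the representation-theoretic identification of $W_2$: there \emph{are} faithful reducible three-dimensional $\SL_2$-representations (e.g.\ $\CC\oplus\CC^2$), so the correct statement is that a three-dimensional representation on which $-I$ acts by a scalar (as it must, since the action on $\bH_2(X)$ factors through $\PGL_2$) is either trivial or $\Sym^2\CC^2$; your conclusion stands once phrased this way. Second, in identifying the complement of $Q$ with smooth conics you should also rule out reducible reduced conics, i.e.\ pairs of intersecting lines; in the MU case any two distinct lines are disjoint (as noted in the paper's introduction), so no such pairs occur, but this is not addressed in your sketch. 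Third, your semicontinuity step is more elaborate than necessary: once you grant that $\bH_2(X)\cong\PP^2$ is everywhere smooth of dimension $2$, the tangent space $\rH^0(N_{C/X})$ has dimension exactly $2$ at every smooth conic $C$, which forces $N_{C/X}=\cO_C\oplus\cO_C$ directly, without any stratification or invariant-subset argument.

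The point you flag as the main obstacle --- that $\bH_2(X)$ is globally $\PP^2$ and that the double-line family lands in the same component --- is indeed where the substance lies, and is precisely what the paper outsources to the references.
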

For general $X$ (for example, (1) of Proposition \ref{linesinx}), since $N_{L/X}=\cO_L\oplus \cO_L(-1)$ there does not exist any planar double line in $X$. Hence the Hilbert scheme $\bH_2(X)$ for general $X$ parameterizes a smooth conic or a pair of lines (\cite[Corollary 4.3]{BF11}). Finally, let us mention a property of a rational Cohen-Macaulay (shortly, CM) curve which we will use later.
\begin{proposition}[\protect{\cite[Lemma 3.17]{CC11}}]\label{stabr}
Let $C$ be a CM-curve of genus $0$ in $\PP^r$. Then the structure sheaf $\cO_C$ is stable.
\end{proposition}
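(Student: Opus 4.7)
The plan is to verify the Gieseker-stability inequality directly: for every nonzero proper coherent subsheaf $\Ff \subsetneq \cO_C$, the reduced Hilbert polynomials should satisfy $p(\Ff) < p(\cO_C) = m + 1/d$, where $d$ denotes the degree of $C$ in $\PP^r$.

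First I would exploit purity: since $\cO_C$ is Cohen--Macaulay of pure dimension one, any nonzero subsheaf $\Ff$ inherits purity of dimension one (a hypothetical zero-dimensional torsion subsheaf would lie inside the vanishing torsion of $\cO_C$), so the leading coefficient $d_{\Ff}$ of $\chi(\Ff(m)) = d_{\Ff}m + \chi_{\Ff}$ is strictly positive, and $\Ff$ identifies with the ideal sheaf $\cI_{Y/C}$ of a proper nonempty closed subscheme $Y \subsetneq C$. The crucial step is then to bound $\chi_{\Ff}$ above by zero, for which it suffices to prove $\rH^0(\Ff) = 0$. Here I would invoke $\rH^0(\cO_C) \cong \CC$ for any connected CM curve of arithmetic genus zero---a consequence of $\chi(\cO_C) = 1$ together with $h^1(\cO_C) = 0$, the latter extracted from Serre duality on the CM curve $C$ using that the dualizing sheaf $\omega_C$ has no nonzero global sections when $p_a(C) = 0$. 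Any section of $\Ff \hookrightarrow \cO_C$ then lifts to a constant on $C$ that must vanish on the nonempty $Y$, and is hence zero.

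Granted the bound $\chi_{\Ff} = -h^1(\Ff) \leq 0$, the stability inequality is immediate: $\chi_{\Ff}/d_{\Ff} \leq 0 < 1/d$, so $p(\Ff) < p(\cO_C)$, which is precisely Gieseker stability of $\cO_C$. The main technical obstacle is justifying $\rH^0(\cO_C) = \CC$ in the full generality of possibly non-reduced or reducible connected CM curves of genus zero; this amounts to ruling out nilpotent global sections. It can be handled uniformly by the Serre-duality route sketched above, or---for the specific curves of degree $\leq 4$ relevant later in the paper---by a direct case analysis on the allowable multiple structures, such as the planar double-line extension already recorded in Section~\ref{lowrat}.
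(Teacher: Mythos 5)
The paper offers no internal proof of this proposition---it is imported verbatim from \cite[Lemma 3.17]{CC11}---so there is nothing to compare your argument against except the statement itself. Your reduction is the standard one and is sound as far as it goes: a nonzero proper subsheaf of $\cO_C$ is the ideal sheaf $\cI_{Y/C}$ of a closed subscheme $\emptyset\neq Y\subsetneq C$, purity of $\cO_C$ forces it to have positive leading coefficient $d_{\cI}$, and once one knows $\rH^0(\cI_{Y/C})=0$ one gets $\chi(\cI_{Y/C})=-h^1(\cI_{Y/C})\le 0$, hence $\chi(\cI_{Y/C})/d_{\cI}\le 0<1/d$ and Gieseker stability follows.

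The genuine gap is exactly the step you flag as the ``main technical obstacle,'' namely $\rH^0(\cO_C)=\CC$, and the ``Serre-duality route'' you offer to close it is circular: duality gives $h^0(\omega_C)=h^1(\cO_C)$, so the assertion that $\omega_C$ has no global sections when $p_a(C)=0$ \emph{is} the claim $h^1(\cO_C)=0$; arithmetic genus zero by itself only yields $h^0(\cO_C)-h^1(\cO_C)=1$. Worse, no uniform argument can exist at this level of generality, because the claim fails for some connected CM curves of arithmetic genus $0$: let $C_0=L_1\cup_p L_2$ be two incident lines, let $\cL$ be a line bundle on $C_0$ of bidegree $(1,-2)$, and let $C$ be the split ribbon with $\cO_C=\cO_{C_0}\oplus\cL$, $\cL^2=0$. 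Then $C$ is connected and Cohen--Macaulay with $\chi(\cO_C)=1+\chi(\cL)=1$, yet $h^0(\cO_C)=1+h^0(\cL)=2$; the nonzero nilpotent section $s\in\rH^0(\cL)$ generates a subsheaf $s\cdot\cO_C\cong\cO_{L_1}$ with reduced Hilbert polynomial $m+1>m+1/4$, so $\cO_C$ is not even simple. Hence the hypothesis $h^0(\cO_C)=\CC$ (equivalently $h^1(\cO_C)=0$) must either be built into the meaning of ``genus $0$'' or be verified for the curves at hand. Your fallback suggestion---a direct check on the multiple structures that actually occur in the paper, where the graded pieces of the CM filtration beyond $\cO_{C_{\mathrm{red}}}$ all have negative degree and thus contribute no sections---is the route that works for the application; the uniform duality argument does not.
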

\subsection{A Bia{\l}ynicki-Birula's theorem}\label{bbthm}
Let $X$ be a smooth projective variety with $\CC^*$-action. Then the $\CC^*$-invariant loci of $X$ decompose into $X^{\CC^*}=\bigsqcup\limits_{i} Y_i$ such that each component $Y_i$ is connected. For each $Y_i$ the $\CC^*$-action on the tangent bundle $TX|_{Y_i}$ provides a decomposition as $TX|_{Y_i}=T^+\oplus T^{0}\oplus T^{-}$ where $T^+$, $T^{0}$ and $T^{-}$ are the subbundles of $TX|_{Y_i}$ such that the group $\CC^*$ acts with positive, zero and negative respectively. Under the local linearization, $T^0\cong TY_i$ and
\[
T^+\oplus T^{-}=N_{Y_i/X}=N^{+}(Y_i)\oplus N^{-}(Y_i)
\]
is the decomposition of the normal bundle of $Y_i$ in $X$. Let  $\mu^{\pm}(Y_i):=\rank N^{\pm}(Y_i)$. A fundamental result in the theory of $\CC^*$-action on $X$ has been provided by A. Bia{\l}ynicki-Birula. Let
\[
X^{+}(Y_i)=\{x\in X \mid \lim_{t\to 0} t\cdot x \in Y_i\} \; \text{and}\; X^{-}(Y_i)=\{x\in X \mid \lim_{t\to \infty} t\cdot x \in Y_i\}.
\]
\begin{theorem}[\protect{\cite{Bir73}}]\label{prop:onetoone}
Under the above assumptions and notations,
\begin{enumerate}
\item $X=\bigsqcup\limits_{i} X^{\pm}(Y_i)$ and
\item For each connected component $Y_i$, there are $\CC^*$-equivariant isomorphism $X^{\pm}(Y_i)\cong N^{\pm}(Y_i)$ over $Y_i$ where $N^{\pm}(Y_i)\lr Y_i$ is a Zariski locally trivial fiberation with the affine space $\CC^{\mu^{\pm}(Y)}$.
\end{enumerate}
\end{theorem}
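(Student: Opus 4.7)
The plan is to reduce to the case of a linear $\CC^*$-action on affine space via Sumihiro's equivariant embedding theorem, and then to glue the resulting local pieces. I focus throughout on $X^+(Y_i)$; the case of $X^-(Y_i)$ is symmetric under inverting the $\CC^*$-action.

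First, I build a set-theoretic retraction $p^+\colon X\to X^{\CC^*}$. Since $X$ is projective, for each $x\in X$ the orbit map $\sigma_x\colon\CC^*\to X$, $t\mapsto t\cdot x$, extends uniquely to a $\CC^*$-equivariant morphism $\PP^1\to X$ by the valuative criterion of properness; the image of $0\in\PP^1$ is the well-defined limit $\lim_{t\to 0} t\cdot x$, and it is $\CC^*$-fixed, hence lies in a unique component $Y_{i(x)}$. Setting $p^+(x):=\lim_{t\to 0} t\cdot x$ yields the set-theoretic disjoint decomposition $X=\bigsqcup_i X^+(Y_i)$ of part (1).

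For (2), I linearize around each $y\in Y_i$. By Sumihiro's theorem there is a $\CC^*$-invariant affine open $U\subset X$ with a $\CC^*$-equivariant closed embedding $\iota\colon U\hookrightarrow \AA^N$ into a representation with diagonal weights. Writing $\AA^N=V^+\oplus V^0\oplus V^-$ according to the sign of the weights, the subset of $\AA^N$ whose $t\to 0$ limit exists and lies in $V^0$ is exactly $V^+\oplus V^0$, and the limit map is the linear projection onto $V^0$. Thus $U\cap X^+(Y_i)$ is closed in $U$ and the restriction of $p^+$ is a genuine morphism. Using smoothness of $X$ at $y$ together with smoothness of the fixed locus $X^{\CC^*}$ (a classical consequence of reductivity), one can arrange $\iota$ so that $d\iota_y$ realizes the canonical weight decomposition $T_y X=T^+_y\oplus T^0_y\oplus T^-_y$ as the inclusion into $V^+\oplus V^0\oplus V^-$; then $\iota(Y_i\cap U)=\iota(U)\cap V^0$ and the fiber of $p^+$ over $y$ is identified with $\iota(U)\cap V^+$, an affine space of dimension $\mu^+(Y_i)$ tangent to $N^+(Y_i)_y$.

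To promote this to a Zariski-locally trivial bundle, I cover $Y_i$ by such affine opens $\{U_\alpha\}$ and observe that, on overlaps, the $\CC^*$-equivariant transition maps respect the weight decompositions, so the $V^+$-factors patch consistently into the subbundle $N^+(Y_i)\subset N_{Y_i/X}$. The resulting $\CC^*$-equivariant isomorphism $X^+(Y_i)\cong N^+(Y_i)$ over $Y_i$ realizes $X^+(Y_i)\to Y_i$ as an affine-space fibration with fiber $\CC^{\mu^+(Y_i)}$, proving (2). The main obstacle is the linearization step: upgrading the infinitesimal weight decomposition of $T_y X$ to a Zariski-local coordinate system on a neighborhood of $y$ in which projection onto $V^0$ recovers $Y_i$ exactly (not a larger fixed component or a thickening) and in which the positive-weight locus becomes an affine bundle over $Y_i$ on the nose. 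This is where the smoothness hypothesis on $X$ is used essentially, together with the reductivity of $\CC^*$ which ensures a functorial splitting into weight spaces at every stage.
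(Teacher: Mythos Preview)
The paper does not prove this theorem at all: it is stated as a background result with a citation to Bia{\l}ynicki-Birula's original paper \cite{Bir73}, and no proof is given. So there is nothing in the paper to compare your proposal against.

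That said, your sketch follows one of the standard modern routes to the Bia{\l}ynicki-Birula decomposition (via Sumihiro's equivariant linearization rather than Bia{\l}ynicki-Birula's original argument). The set-theoretic decomposition in (1) is fine. For (2), the outline is correct in spirit, but the passage ``one can arrange $\iota$ so that $d\iota_y$ realizes the canonical weight decomposition \ldots; then $\iota(Y_i\cap U)=\iota(U)\cap V^0$'' is doing real work that you have only asserted: you need to actually produce \'etale (or Zariski) coordinates adapted to the weight filtration and check that the positive-weight slice through $y$ is an affine space, not merely smooth of the right dimension. Likewise, the gluing step---that the local affine fibrations patch to an affine bundle isomorphic to $N^+(Y_i)$ rather than a possibly nontrivial torsor---requires more than ``transition maps respect the weight decompositions.'' These are the genuine technical points in any proof of the theorem; your sketch identifies them but does not resolve them.
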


\section{$\SL_2$-equivariant construction of the MU-variety}
From now on, unless otherwise stated, let $X$ be the Fano threefold of index one, degree $22$, and $\text{Pic}(X)\cong\mathbb{Z}$.
\subsection{Construction of the variety $X$ and a flat family of twisted cubics}\label{conxandcubic}
In this subsection, we recall constructions of $X$ as a sublocus of the Hilbert scheme of twisted cubics. Let $W$ be a vector space of $\dim W= 4$ and $W^*$ be the dual vector space of the space $W$. Let $\bH$ be the component of twisted cubic curves of the full Hilbert scheme $\bH_3(\PP^3)$. Let $\cC\subset \bH\times \PP^3$ be the universal subscheme parameterized by $\bH$. Lastly, let $p$ and $q$ be the projection map from $\bH\times \PP(W)$ to $\bH$ and $\PP(W)$ respectively. By twisting $\cO_{\PP^3}(2)$ of the universal structure sequence $\ses{I_{\cC}}{\cO_{\bH\times \PP^3}}{\cO_{\cC}}$ and push it down along the map $p:\bH\times\PP^3\lr\bH$, we get an exact sequence
\begin{equation}\label{univseq}
0\lr p_*I_{\cC}(2)\lr \Sym^2W^*\otimes \cO_{\bH}\lr p_*\cO_{\cC}(2)\lr 0.
\end{equation}
The direct image sheaf $\cF:=p_*I_{\cC}(2)$ is a vector bundle of rank $3$ on $\bH$ which induces a birational morphism
\[
\phi: \bH\lr \Gr(3, \Sym^2W^*).
\]
Let $\bK:=\phi(\bH)$ be the image of $\bH$ by the map $\phi$. Then $\bK$ is so called, the \emph{Kronecker's modules} space. Note that the map $\phi$ is an isomorphism over the space of CM-curves parameterized by $\bH_3(\PP^3)$. From the second map of the exact sequence of \eqref{univseq} (tensoring with $W^*$) and the multiplication map $\Sym^2W^*\otimes W^* \lr\Sym^3W^*$ over $\bK$, we have a sheaf homomorphism
\begin{equation}\label{kebu}
\Phi: \cF\otimes  W^*\lr \Sym^3W^*\otimes \cO_{\bK}.
\end{equation}
\begin{notation}[\protect{\cite[Section 3]{ES95}}]\label{cebu}
Let $\cE:=\ker(\Phi)$ be the kernel bundle of the map $\Phi$ in \eqref{kebu}.
\end{notation}
The fiber of $\cE$ at the closed point $\phi([C])\in \bK$ is generated by the syzygy relation among the quadric generators of the twisted ideal $I_{C}(2)$ of the curve $C\subset \PP^3$. Furthermore, the image of the map $\Phi$ is $\text{im}(\Phi)=p_*I_{\cC}(3)$ and it gives rise to a twisted universal family
\begin{equation}\label{univcubics}
0\lr \cE \lr \cF\boxtimes \cO_{\PP^3}(1)\lr \cO_{\bK}\boxtimes  \cO_{\PP^3}(3)\lr \cO_{\cC}(3)\lr0
\end{equation}
of the universal curve $\cC\subset\bK\times \PP^3$. On the other hand, for a net of quadric forms $q\subset \Sym^2 W$, let $q^{\bot}\subset   \Sym^2 W^*$ be the space annihilated by the net $q$ with respect to a polar pairing $\Sym^2W\otimes \Sym^2W^*\lr \CC$ (\cite[Section 2]{Sch91}). Then the ideal sheaf $\widetilde{I}_{q^{\bot}}$ generated by $q^{\bot}$ has a free resolution on $\PP^3=\PP(W)$ where the first truncated map is given by
\begin{equation}\label{resGor}
\cO_{\PP^3}^{\oplus 8}(-3)\oplus \cO_{\PP^3}^{\oplus 3}(-4)\stackrel{(\phi_1,\phi_2)}{\longrightarrow} \cO_{\PP^3}^{\oplus 7}(-2) \lr \widetilde{I}_{q^{\bot}}\lr 0.
\end{equation}
\begin{notation}[\protect{\cite[Proposition 4.2]{Sch91}}]\label{noinst}
The twisted kernel $\cR:=\text{ker}(\phi_1)\otimes \cO_{\PP^3}(5)$ of the map $\phi_1$ in \eqref{resGor} is called by the \emph{instanton bundle} on $\PP^3$.
\end{notation}
 Furthermore, tor-algebra structure of the resolution of $\widetilde{I}_{q^{\bot}}$ in \eqref{resGor} induces a net of alternative form on $q^{\bot}$ which we will denote by $\omega$ (For the detail, see \cite[Section 5]{Sch91}). We let $X$ be the \emph{tri-isotropic} Grassimannian variety $\Gr^{\omega}(3, q^{\bot})\subset \Gr(3, \Sym^2W^*)$, which is our prime Fano $3$-fold.
Note that the restriction of the universal family $\cC$ on $X$ provides a flat family of twisted cubics in $X$ parameterized by $\PP^3$ (\cite[Theorem 2.4]{KS04}). Furthermore,
\begin{proposition}[\protect{\cite{Kuz97}, \cite[Theorem 2.4]{KS04}}]\label{h3sm}
Let $\bH_3(X)$ be the Hilbert scheme of twisted cubics in $X$. Then $\bH_3(X)\cong \PP^3$.
\end{proposition}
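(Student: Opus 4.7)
The plan is to exhibit an isomorphism $\Psi \colon \PP^3 \to \bH_3(X)$ from the flat family $\cC|_X \to \PP^3$ constructed in Section~\ref{conxandcubic} and then verify both bijectivity of $\Psi$ and smoothness of $\bH_3(X)$ of dimension three. Concretely, the fiber of $\cC|_X \to \PP^3$ over $p \in \PP(W) = \PP^3$ is the subscheme $D_p := \{x \in X : p \in C_x\} \subset X$, which is a twisted cubic in $X$ by \cite[Theorem~2.4]{KS04}; the morphism $\Psi(p) := [D_p]$ then classifies this family.

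First I would show $\Psi$ is bijective on closed points via the Kuznetsov flop diagram (Proposition~\ref{flopd}): for any twisted cubic $D \subset X$, its strict transform in the common resolution is identified with a $\PP^1$-fiber of the $\PP^1$-bundle $Y_1 \to \PP^3$, giving a unique $p(D) \in \PP^3$ with $D = D_{p(D)}$. This provides an inverse to $\Psi$ on closed points and so yields both surjectivity and injectivity.

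Second I would verify that $\bH_3(X)$ is smooth of dimension three at every $[D]$. By adjunction, $\deg N_{D/X} = (2g-2) - K_X \cdot D = -2 + 3 = 1$, so the expected dimension equals $\chi(N_{D/X}) = 3$. Smoothness amounts to showing $h^1(N_{D/X}) = 0$ for every $D$; for smooth $D \cong \PP^1$ this reduces to excluding splittings $N_{D/X} \cong \cO(a)\oplus\cO(b)$ with $a \leq -2$, checked by direct computation using the embedding $D \subset X$, while for singular or non-reduced $D$ one applies the analogous cohomological analysis to the Cohen-Macaulay structure, as carried out in \cite[Theorem~2.4]{KS04}. With smoothness in hand, the bijective morphism $\Psi$ between smooth irreducible projective varieties of the same dimension in characteristic zero is an isomorphism by Zariski's Main Theorem.

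The main obstacle is the smoothness of $\bH_3(X)$ at the singular and non-reduced twisted cubics $D \subset X$, where the normal-sheaf analysis is subtle. This is the principal input we take from \cite{Kuz97, KS04}; an independent approach would require a direct case-by-case classification of twisted cubic subschemes of $X$, which is already implicit in the Kronecker-module description of Section~\ref{conxandcubic}.
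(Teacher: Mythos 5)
Your outline (a classifying morphism $\Psi:\PP^3\to\bH_3(X)$ from the family of Section~\ref{conxandcubic}, bijectivity, smoothness, then Zariski's Main Theorem) is a legitimate route, and the numerical step is right: $\deg N_{D/X}=2g-2-K_X\cdot D=1$, so $\chi(N_{D/X})=3$. But the proposal has two genuine gaps, and both occur exactly at the degenerate members of the family, which are the only nontrivial cases.

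First, smoothness. Your argument for $h^1(N_{D/X})=0$ is only carried out for smooth $D\cong\PP^1$ (ruling out summands $\cO(a)$ with $a\le -2$); for the singular and non-reduced members --- a conic with an attached line over $S^-\setminus C_{\mathrm{red}}$, and a triple line on a quadric cone over $C_{\mathrm{red}}$ --- you simply defer to \cite{KS04}. Since the proposition is attributed to those references, this makes the proof circular at its crux. The paper avoids the case analysis entirely: restricting the resolution \eqref{univcubics} to $X\times\{p\}$ expresses (a twist of) $\cO_{D_p}$ by the bundles $\cE$ and $\cF$ on $X$, and the stability of $\cE,\cF$ together with the vanishing $\Ext^i_X(\cF,\cE)=0$ (they form part of an exceptional collection) yields $\dim\Hom_X(I_{D_p},\cO_{D_p})=3$ uniformly in $p$, with no separate treatment of the degenerate curves. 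This is the mechanism your proof is missing.

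Second, bijectivity. You identify the inverse of $\Psi$ by sending a twisted cubic $D\subset X$ to the point $p\in\PP^3$ whose $\PP(\cR)$-fiber flops to $D$. By Lemma~\ref{setmapprpe} this correspondence only produces honest twisted cubics for $p\in\PP^3\setminus S^-$; over $S^-\setminus C_{\mathrm{red}}$ the image of the fiber is a conic with a base point, and over $C_{\mathrm{red}}$ it is a point. So the flop does not directly furnish an inverse on the locus of degenerate curves, and one must instead argue through the scheme-theoretic fibers $D_p$ of the universal family (equivalently, through the Kronecker-module description), which is where the actual identification of the degenerate members as conic-plus-line and triple lines takes place. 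Once the tangent-space computation above is in place, the cleanest conclusion is that $\Psi$ is a morphism between smooth projective threefolds that is bijective (from the family being universal at every point), and only then does your appeal to Zariski's Main Theorem close the argument.
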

\begin{proof}
From the restricted universal sequence \eqref{univcubics}, one can see that $\dim\Hom_{X}(I_C, \cO_C)=3$ from the fact that the stability of $\cE$, $\cF$ and $\Ext_{X}^i(\cF,\cE)=0$, $i\geq 0$. Note that $\cE$, $\cF$ are two components of exceptional collections of $X$.
\end{proof}
To find a $\CC^*$-fixed quartic curve in $X$, we need to know some explicit generators of the deformation space of $\bH_3(X)$. Hence we will compute the deformation space in various local charts in Section \ref{31}.
\subsection{Review on the paper \cite{Kuz97}}
In this subsection, we review the results of \cite{Kuz97}, which we will use later to find an irreducible $\CC^*$-fixed rational curves. In the paper \cite{Kuz97}, by restricting the flip diagram (obtained by the geometric invariant theory \cite{Tha96}) among the birational models of the universal cubic curves on $\PP^3$, the author obtained an explicit flop diagram among two $\PP^1$-bundles over $\PP^3$ and $X$ respectively. Roughly speaking, two bundles are families of the (rational) maps of degree three of both spaces (\cite[Section 4.4]{Kuz97} and \cite[Proposition 4.8]{CM17}).
\begin{proposition}[\protect{\cite[Theorem 1 and Section 4.1.2]{Kuz97}}]\label{flopd}
Let $\pi_{-}:\PP(\cR)\lr \PP^3$ and $\pi_{+}:\PP(\cE)\lr X$ be the canonical projection maps for the bundles $\cR$ and $\cE$ in Notation \ref{noinst}, \ref{cebu} respectively. Then there exists a standard flop diagram among two projective bundles $\PP(\cR)$ and $\PP(\cE)$:
\begin{center}
\begin{minipage}[c]{0.4\textwidth}
\[\xymatrix@C2pc@R2pc{
& \cM \ar[ld]_{\hat{\pi}_{-}} \ar[rd] ^{\hat{\pi}_{+}} & \\
\PP(\cR) \ar[dr] & & \PP(\cE) \ar[dl] \\
& Y &
}\]
\end{minipage}
\begin{minipage}[c]{0.1\textwidth}
\[\supset\qquad\vspace{0.7em}\]
\end{minipage}
\begin{minipage}[c]{0.4\textwidth}
\[\xymatrix@C0.5pc@R1pc{
& \PP(\cS^-)\times_{C}\PP(\cS^+) \ar[ld]\ar[rd] & \\
\PP(\cS^-) \ar[rd] & & \PP(\cS^+) \ar[ld] \\
& C &
}\vspace{0.7em}\]
\end{minipage}
\end{center}
where the flopping loci $\PP(\cS^{\pm})$ are projectivized $\PP^1$-bundle of some rank two bundles $\cS^{\pm}$ over a plane quartic curve $C$ (embedded in $\PP^3$ and $X$ respectively). Then
\begin{enumerate}
\item the master space $\cM$ is the resolution $\cM\lr \cC$ of the universal curve $\cC\subset \PP^3\times X$.
\item Let $\widetilde{\pi}_{\pm}=\hat{\pi}_{\pm}\circ \pi_{\pm}$. For the classes $h={\widetilde{\pi}_{-}}^*\cO_{\PP(W)}(1)$, $H={\widetilde{\pi}_{+}}^*\cO_X(1)$, $r={\hat{\pi}_0}^*\Oo_{\PP(\Rr)}(1)$, $e={\hat{\pi}_0}^*\Oo_{\PP(\Ee)}(1)$ and $d=[\cM]$, we have equalities:
\begin{equation}\label{nclass}
h=3e-H-d,\; H=3r+5h-d
\end{equation}
regraded as a divisor class in the master space $\cM$.
\item The flopping base space $Y\subset \PP^7$ is the image of the map given by the complete linear system $|r+2h|(\rH^0(\PP(\cR), \cO(r+2h))\cong \rH^0(\PP^3, \cR(2)))$ on $\PP(\cR)$.
\end{enumerate}
\end{proposition}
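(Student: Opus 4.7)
The plan is to derive the diagram from a variation of GIT (\cite{Tha96}) applied to a natural parameter space of twisted cubics equipped with a marked point, then restrict to the tri-isotropic locus cutting out $X$. Concretely, on the space of Kronecker modules together with a marked source/target point, two different chambers of the linearization identify the two sides: the chamber favoring the target picks out a pair $([C], p)$ with $p \in C \subset \PP^3$, giving $\PP(\cR) \to \PP^3$ whose fiber over $p$ parametrizes twisted cubics through $p$ in the flat family; the opposite chamber picks out $([C], q)$ with $q \in X$, giving $\PP(\cE) \to X$. This is the framework used in \cite{Kuz97} and rests on the universal resolutions \eqref{univcubics} and \eqref{resGor}.

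For part (1), I would check that the master space $\cM$ (the GIT quotient at the wall) admits natural maps to both $\PP(\cR)$ and $\PP(\cE)$ and that the induced map $\cM \to \PP^3 \times X$ factors through $\cC$ with image equal to $\cC$. Showing that this factorization is a small resolution requires identifying the flopping loci as $\PP^1$-bundles over a plane quartic $C$, which can be done by analyzing the semistable-but-not-stable locus at the wall, where the stabilizer jumps and the quotient is forced to contract $\PP^1$-fibers.

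For part (2), the divisor relations \eqref{nclass} follow from the tautological sequences. On $\PP(\cE)$, the universal sequence \eqref{univcubics} realizes the rational map $\PP(\cE) \dashrightarrow \PP^3$ as having fiber degree three, so the pullback of $\cO_{\PP^3}(1)$ is $3e$ corrected by the twist $H$ and the exceptional divisor $d$ of $\cM \to \PP(\cE)$, giving $h = 3e - H - d$. The symmetric computation on $\PP(\cR)$, using the dual cubic parametrization, yields $H = 3r + 5h - d$, where the coefficient $5$ reflects the numerical content of the instanton resolution \eqref{resGor}.

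For part (3), pushing forward along $\pi_{-}$ gives $\pi_{-*}\cO(r + 2h) \cong \cR(2)$, so $\rH^0(\PP(\cR), \cO(r+2h)) \cong \rH^0(\PP^3, \cR(2))$; the monad \eqref{resGor} computes this to be $8$-dimensional, embedding $Y \subset \PP^7$, and base-point-freeness of the system follows from the global generation of $\cR(2)$. The main obstacle will be the precise formulation of the VGIT so that the abstractly defined bundles $\cR$ and $\cE$ of Notations \ref{noinst} and \ref{cebu} match the tautological bundles produced by the two chambers; combined with the exceptional-divisor bookkeeping needed to pin down the coefficients in \eqref{nclass}, this is the technical heart of Kuznetsov's construction.
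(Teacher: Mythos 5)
The paper does not prove this proposition at all: it is imported verbatim from \cite[Theorem 1 and Section 4.1.2]{Kuz97}, and the only ``proof'' in the text is the citation together with the surrounding remark that Kuznetsov obtains the diagram by restricting a Thaddeus-style VGIT flip among birational models of the universal cubic curve. Your outline is therefore not a different route from the paper --- it is a reconstruction of the route taken by the cited source --- and at the level of strategy (two GIT chambers giving the two $\PP^1$-bundles, the wall quotient giving the master space $\cM$, the strictly semistable locus giving the flopping loci) it is the right one.

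That said, as a proof the sketch has a concrete gap in part (2). The fiber-degree argument you give only determines the coefficients of $e$ and $r$: writing $h=\alpha e+\beta H+\gamma d$ and restricting to a fiber $l_x=\PP(\cE|_x)$ over a general $x\in X$ gives $h\cdot l_x=3$, $e\cdot l_x=1$, $H\cdot l_x=0$, $d\cdot l_x=0$, which pins down $\alpha=3$ but says nothing about $\beta$ or $\gamma$; likewise for $H=3r+5h-d$ the restriction to $l_p=\PP(\cR|_p)$ fixes only the $3$. The coefficient $5$ (and both $-d$ terms) cannot be read off from ``the numerical content of the instanton resolution'' as stated --- you need further independent test classes, e.g.\ curves contained in the flopping locus $\PP(\cS^-)\times_C\PP(\cS^+)$ where $d$ restricts nontrivially, or the crepancy relation $\hat\pi_-^*K_{\PP(\cR)}=\hat\pi_+^*K_{\PP(\cE)}$ combined with $K_{\PP(\cR)}=-2r+(c_1(\cR)-4)h$ and $K_{\PP(\cE)}=-2e+(c_1(\cE)+c_1(\omega_X))H$, which is how such coefficients are actually pinned down. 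Similarly, the identification of the strictly semistable locus with a $\PP^1\times\PP^1$-bundle over a \emph{plane quartic} $C$ is asserted rather than derived; since this curve and the relations \eqref{nclass} are precisely the content the paper later uses (Lemma \ref{setmapprpe}, Proposition \ref{irrcomli}), these are the steps that would have to be carried out in full rather than gestured at.
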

\begin{notation}
Let us denote by $S^{-}\subset \PP^3$ (resp. $S^{+}\subset X$) the image of the flopping locus $\PP(\cS^{-})$ (resp. $\PP(\cS^{+})$) along the canonical projection map $\pi_{-}$ (resp. $\pi_{+}$) in Proposition \ref{flopd}.
\end{notation}
When $X$ is the MU-variety, then the image variety $S^{\pm}$ (of the reduced scheme structure) is a tangentially developable surface in each space $\PP^3$ and $X$ of the degree $3$, $12$ rational normal curve $C_{\text{red}}$ (\cite{Kuz97, MU83}) respectively. On the other hand, for general $X$ (for example, $\bH_1(X)$ is a smooth variety), the type of the cubic curves in $X$ are classified by the strata: $\bH_3(X)\cong\PP^3=(\PP^3\setminus S^{-})\cup (S^{-}\setminus C)\cup C$ where the closed point of each stratum parameterizes a twisted cubic, smooth conic with an attached line, and pair of lines with a multiplicity two structure respectively. On the other hand, when $X$ is the MU-variety, the deepest stratum $C_{\text{red}}$ parameterizes triple lines lying on a quadric cone (\cite[page 41]{EH82}), and the others are the same one for general variety $X$. For the study of cubic curves in $\PP^3$ parameterized by $X$, see Section 3 in \cite{Mar04}.

\begin{lemma}\label{setmapprpe}
Let $X$ be the MU-variety. Let $q: \PP(\cR)\dashrightarrow \PP(\cE)$ be the flop map in Proposition \ref{flopd}. Let $l_p=\PP(\cR|_p)$ be the fiber over a point $p\in \PP^3$ and $\bar{l}_p=\pi_{+}(q(l_p))$. Then for each point $p\in \PP^3\setminus S^{-}$, $\bar{l}_p$ is a twisted cubic curve. For $p\in S^{-}\setminus C_{\text{red}}$, $\bar{l}_p$ is a smooth conic with a base point. Lastly, $\bar{l}_p$ is a point for $p\in C_{\text{red}}$.
\end{lemma}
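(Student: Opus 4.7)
The plan is to identify $\bar{l}_p$ with the image in $X$ of the fiber over $p$ of the universal cubic family $\cC\to\PP^3$, compute its degree via intersection theory on the master space $\cM$ using the class relations \eqref{nclass}, and then read off the precise geometric type from the local form of the standard flop.

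Since $\cM\to\cC$ is a resolution by Proposition~\ref{flopd}(1), the image $\bar{l}_p=\pi_+(q(l_p))$ equals the fiber over $p$ of the cubic family $\cC\to\PP^3$ projected to $X$. For $p\in\PP^3\setminus S^-$, the fiber $l_p$ is disjoint from the flopping locus $\PP(\cS^-)\subset\PP(\cR)$ (whose $\pi_-$-image is $S^-$), so $q$ is a local isomorphism near $l_p$; writing $\tilde{l}_p$ for its strict transform in $\cM$ and combining $H=3r+5h-d$ from \eqref{nclass} with $\tilde{l}_p\cdot r=1$ and $\tilde{l}_p\cdot h=0$ (both holding because $l_p$ is a line in the $\PP^1$-bundle $\PP(\cR)\to\PP^3$) and $\tilde{l}_p\cdot d=0$, I obtain
\[
\deg_X\bar{l}_p \;=\; \tilde{l}_p\cdot H \;=\; 3-\tilde{l}_p\cdot d \;=\; 3;
\]
by flatness of $\cC\to\PP^3$ combined with Proposition~\ref{h3sm}, $\bar{l}_p$ is a smooth twisted cubic.

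For $p\in S^-\setminus C_{\text{red}}$, the restriction $\pi_-|_{\PP(\cS^-)}\!:\!\PP(\cS^-)\to S^-$ is an isomorphism over the smooth part of $S^-$, so $l_p$ meets $\PP(\cS^-)$ transversally at a single point and the same formula yields $\deg_X\bar{l}_p=2$. The standard flop replaces this intersection point by an entire fiber $f$ of $\PP(\cS^+)\to C$ in $\PP(\cE)$; since $\PP(\cS^+)=\pi_+^{-1}(C)$, the fiber $f$ is simultaneously a fiber of $\pi_+\!:\!\PP(\cE)\to X$, so it collapses to a single point of $C\subset X$ --- the \emph{base point}. The complementary strict transform projects birationally onto a degree-$2$ curve in $X$, necessarily a smooth conic by Proposition~\ref{conicsinx}, passing through this base point. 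Finally, for $p\in C_{\text{red}}$, the cuspidal-edge structure of the tangent developable $S^-$ forces $l_p$ to itself be a flopping fiber of the $\PP^1$-bundle $\PP(\cS^-)\to C$; the flop then sends $l_p$ to a fiber of $\PP(\cS^+)\to C$ in $\PP(\cE)$, which is again a fiber of $\pi_+$, so $\bar{l}_p$ is a single point on the $\SL_2$-orbit curve $C_{\text{red}}\subset X$.

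The main obstacle is verifying case (iii): one must show that the $\pi_-$-fiber $l_p$ over a point $p\in C_{\text{red}}$ is literally a fiber of the flopping contraction $\PP(\cR)\to Y$ (equivalently, of $\PP(\cS^-)\to C$). This amounts to a local analysis of the tangent developable surface $S^-\subset\PP^3$ along its cuspidal edge $C_{\text{red}}$, exploiting the $\SL_2$-equivariant structure from Section~\ref{conxandcubic} together with the identification of $C$ with the plane double conic $\bH_1(X)$ (Proposition~\ref{linesinx}(2)). Once this is in place, the compatibility $\PP(\cS^+)=\pi_+^{-1}(C)$ and the standard flop formalism immediately yield all three geometric descriptions.
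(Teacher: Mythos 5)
Your treatment of the third case --- the only one the paper actually proves rather than cites from \cite{Kuz97} --- rests on a claim that is false, and which you moreover defer rather than establish. You assert that for $p\in C_{\text{red}}$ the fiber $l_p=\PP(\cR|_p)$ is itself a fiber of the flopping contraction $\PP(\cS^-)\to C$. It cannot be: a fiber of $\PP(\cS^-)\to C$ is contracted by $\PP(\cR)\to Y$, whereas $(r+2h)\cdot l_p=1\neq 0$ and $Y$ is embedded by $|r+2h|$ (part (3) of Proposition \ref{flopd}), so $l_p$ maps onto a curve in $Y$; alternatively, a fiber of $\PP(\cS^-)\to C$ maps under $\pi_-$ onto a ruling of the surface $S^-$ (otherwise $\pi_-(\PP(\cS^-))$ could not be a surface), while $l_p$ maps to the single point $p$. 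This computation $(r+2h)\cdot l_p=1$ is precisely the paper's argument, run in the contrapositive: the paper observes that $\bar{l}_p$ is contained in the support of the triple line parameterized by $p$, hence is a line or a point; if it were a line it would lie in $S^+$, forcing $q(l_p)$ to be a fiber of $\PP(\cS^+)\to C$ and hence $l_p$ to be contracted to a point of $Y$ --- contradiction. Since you explicitly flag your key claim as ``the main obstacle'' and leave it to an unspecified local analysis, case (iii) is both unproved and, as stated, unprovable.

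A second error propagates through cases (ii) and (iii): you write $\PP(\cS^+)=\pi_+^{-1}(C)$ and conclude that fibers of $\PP(\cS^+)\to C$ are $\pi_+$-fibers collapsing to points of $C$. But $\pi_+(\PP(\cS^+))=S^+$ is a surface (the tangent developable of the degree-$12$ rational curve), so those fibers map to \emph{lines} in $X$, not points; in particular your identification of the ``base point'' in case (ii) is unjustified, even though your degree computation via $H=3r+5h-d$ --- a reasonable self-contained substitute for the paper's citation of \cite[Section 4.4]{Kuz97} for cases (i) and (ii) --- does correctly show that the strict transform maps to a degree-$2$ curve. To repair case (iii), follow the paper: bound $\bar{l}_p$ inside the support of the triple line parameterized by $p$, and use $(r+2h)\cdot l_p=1$ to rule out the alternative that $\bar{l}_p$ is a line.
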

\begin{proof}
All of the claims were studied in Section 4.4 of \cite{Kuz97} except the last case. If $p\in C_{\text{red}}$, then the possible image $\bar{l}_p$ is a line or point since the point $p\in C_{\text{red}}\subset \PP^3$ parameterizes a triple line in $X$. If it is a line, then it lies in $S^{+}$ and thus it comes from a fiber $\PP(\cS^{+})$ (\cite[Section 4.2]{Kuz97}). But the image of the line $l_p$ along the contraction $\PP(\cR)\lr Y$ is not a point in $Y$ by (3) of Proposition \ref{flopd}. This is a contradiction and thus we proved the claim.
\end{proof}

\subsection{Explicit presentation of $\SL_2$-invariant alternative forms}
From now on, we focus on the MU-variety $X$. We will explicitly construct a family of rational cubic curves in $X$ parameterized by $\PP^3$ to find irrational rational quartic curves in $X$ fixed by the maximal torus ($\cong \CC^*$) under the $\SL_2$-action. In the MU-variety case, by the work of S. Donaldson (\cite[Section 5]{Don08}), the choice of the net of quadric forms $q$ and hence the net of alternative forms $\omega$ are canonical ones, and all of them are $\SL_2$-invariant ones. To find the explicit forms, we need to use the $\SL_2$-representation theory (\cite{FH91} and \cite[Section 5.5]{CS15}). Let us fix $x$, $y$ be the basis of $\CC^2$. Let $W_{d}=\Sym^d\left(\CC^{2}\right)$ with the $\SL_{2}$-action induced from the left multiplication of $\SL_2$ on $\CC^{2}$.
Let 
\[
\left\{u_{d}=x^{d},\; u_{d-2}=x^{d-1}y,\; ... \;, u_{-1}=xy^{2-d},\; u_{-d}=y^{d}\right\}
\]
be the standard basis of $W_d$. The infinitesimal $\fsl_{2}$-action on $W_{d}$ is given by 
\[
e=x\partial_{y},\quad f=y\partial_{x},\quad h=[e,f]
\]
for the standard basis $\langle e,f,h\rangle $ for $\fsl_{2}$. For $0\leq i\leq d$,
\[
\left\{\hspace{.3cm}
\begin{split}
e{\cdot} u_{d-2i}&= iu_{d-2(i-1)},\\
f{\cdot} u_{d-2i}&=(d-i)u_{d-2(i+1)} ,\\
h{\cdot} u_{d-2i}&= (d-2i)u_{d-2i}.
\end{split}\right.
\]
Note that for $g \in \fsl_{2}$, $g\cdot(u_{i}\wedge u_j)=(g{\cdot} u_i)\wedge u_j +u_i\wedge (g{\cdot} u_j)$ for $i\neq j$. Let us fix the standard $\SL_2$-equivariant basis by
\[
u_3=x^3, u_1=x^2y, u_{-1}=xy^2, u_3=y^3
\]
of the dual space $W_3=W^*$ in subsection \ref{conxandcubic}. Let $u_3^*$, $u_1^*$, $u_{-1}^*$, $u_{-3}^*$ be the standard basis of the dual space $W_3^*=W$ such that $u_i^*(u_j)=\delta_{ij}$ for $i, j=\pm 1, \pm 3$. One can easily see that $e$ and $f$ act on the space $W_3^*$ by
\[\begin{split}
e{\cdot}u_{-3}^*=0&,\, e{\cdot}u_{-1}^*=3u_{-3}^*,\, e{\cdot}u_{1}^*=2u_{-1}^*,\, e{\cdot}u_{3}^*=u_1^*,\\
f{\cdot}u_{-3}^*=u_{-1}^*&,\,f{\cdot}u_{-1}^*=2u_{1}^*,\, f{\cdot}u_{1}^*=3u_{3}^*,\, f{\cdot}u_{3}^*=0.
\end{split}\]
On the other hands, let us identify $\CC^2\cong \CC^{2*}$ by $x\mapsto \partial_x$ and $y\mapsto \partial_y$. Let
\[
\partial_3:=\partial_y^3, \partial_1:=\partial_y^2\partial_x, \partial_{-1}:=\partial_y\partial_x^2, \partial_{-3}:=\partial_x^3
\]
be the basis of the symmetric space $\Sym^3\CC^{2*}$ with weights $3$, $1$, $-1$, $-3$. Then the $\SL_2$-equivariant identification $W_3^*\cong\Sym^3\CC^{2*}$ is provided by
$6u_{-3}^*=\partial_3, 2u_{-1}^*=\partial_1, 2u_{1}^*=\partial_{-1}, 6u_{3}^*=\partial_{-3}$.
\begin{lemma}\label{invar7dim}
Under the above assumptions and notations,
the unique $7$-dimensional $\fsl(2)$-invariant subspace in $\Sym^2W_3(\cong \Sym^6\CC^2\oplus \Sym^3\CC^2)$ is generated by
\begin{equation}\label{vectori6}
U_6:=\langle u_{3}^2, u_{3}u_1, 3u_1^2+2u_{3}u_{-1}, 9u_1u_{-1}+u_{3}u_{-3}, 3u_{-1}^2+2u_1u_{-3}, u_{-1}u_{-3}, u_{-3}^2\rangle.
\end{equation}
\end{lemma}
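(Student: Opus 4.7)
The plan is to apply Clebsch--Gordan. Since $W_3$ is the $4$-dimensional irreducible $\fsl_2$-module, the symmetric square decomposes as
\[
\Sym^2 W_3 \cong V_6 \oplus V_2,
\]
where $V_k := \Sym^k\CC^2$ denotes the $(k+1)$-dimensional irreducible $\fsl_2$-module (consistent with the dimension count $10 = 7 + 3$). In particular $V_6$ is the unique $7$-dimensional $\fsl_2$-invariant subspace of $\Sym^2 W_3$, and it suffices to identify $V_6$ with the span $U_6$ of the seven listed vectors.

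First I would identify a highest weight vector. Since the weight of a monomial $u_i u_j \in \Sym^2 W_3$ equals $i + j$, the element $u_3^2$ is, up to scalar, the unique vector of weight $6$, and the identity $e\cdot u_3 = 0$ together with the Leibniz rule $e\cdot(u_i u_j) = (e\cdot u_i) u_j + u_i (e\cdot u_j)$ gives $e\cdot u_3^2 = 0$. Hence $u_3^2$ is a highest weight vector generating a copy of $V_6$ inside $\Sym^2 W_3$.

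Next I would apply the lowering operator $f$ iteratively. Using $f\cdot u_{d-2i} = (d-i) u_{d-2(i+1)}$ on $W_3$ with $d = 3$ together with the Leibniz rule, a direct induction yields, for instance,
\[
f\cdot u_3^2 = 6\, u_3 u_1, \qquad f^2\cdot u_3^2 = 6(3 u_1^2 + 2 u_3 u_{-1}), \qquad f^3\cdot u_3^2 = 12(9 u_1 u_{-1} + u_3 u_{-3}),
\]
and continuing in the same fashion one checks that $f^k \cdot u_3^2$ is a non-zero scalar multiple of the $(k+1)$-st listed vector of $U_6$ for $k = 0, 1, \ldots, 6$, while $f^7 \cdot u_3^2 = 0$. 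Consequently the seven displayed vectors form a basis of $V_6$, proving the claim.

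The only real work is the bookkeeping of the scalar coefficients arising from the Leibniz rule at each application of $f$; since only the linear span matters and each intermediate vector is manifestly non-zero of the correct weight, this step is routine rather than an obstacle.
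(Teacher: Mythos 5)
Your proposal is correct and follows essentially the same route as the paper: identify $u_3^2$ as the highest weight vector and apply the lowering operator $f$ repeatedly to generate the seven listed vectors (your scalar coefficients $6$, $6$, $12,\ldots$ agree with the paper's computation). Your explicit Clebsch--Gordan decomposition $\Sym^2 W_3\cong V_6\oplus V_2$ with $10=7+3$ is the correct justification of uniqueness (and in fact fixes a typo in the lemma statement, where the second summand should be $\Sym^2\CC^2$ rather than $\Sym^3\CC^2$).
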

\begin{proof}
Clearly, $u_3^2$ are unique (up to the scalar multiplication) highest weight vector in $\Sym^2W_3$. The action of $f$ generates the list:
\[
\frac{f{\cdot} u_3^2 }{6}=u_3u_{1}, f{\cdot}u_3u_1=3u_1^2+2u_3u_{-1},  f{\cdot}(3u_1^2+2u_3u_{-1})=9u_1u_{-1}+u_{3}u_{-3},\cdots, f{\cdot}u_{-1}u_{-3}=u_{-3}^2.
\]
\end{proof}
\begin{remark}
\begin{enumerate}
\item The $3$-dimensional invariant component $U_3$ of $\Sym^2 W_3$ is the space of quadrics containing the $\SL_2$-invariant twisted cubic $C_{\text{red}}=C_3\subset \PP(W_3^*)$ (\cite[Exercise 11.20, 11.32]{FH91}). In fact, $I_{C_3}=U_3=\langle u_1^2-u_3u_{-1}, u_1u_{-1}-u_3u_{-3},u_{-1}^2-u_{-3}u_{1}\rangle$.
\item The defining equation of the tangentially developable surface $S^{-}$ of the twisted cubic $C_3$ is (\cite[Section 2]{EL19})
\[
4u_{-3}u_1^3-6u_3u_1u_{-1}u_{-3}+4u_3u_{-1}^3-3u_1^2u_{-1}^2+u_3^2u_{-3}^2=0.
\]
\end{enumerate}
\end{remark}
For later use, we address a property of the restriction of the instanton bundle $\cR$ (Notation \ref{noinst}) along a line $L$ in $\PP^3$. It is well-known that the restricted bundle of $\cR$ is of the form $\cR|_L\cong \cO_L(-j)\oplus \cO_L(j)$, which we call $L$ by a \emph{$j$-jumping} line.
\begin{proposition}
A line $L\subset \PP^3$ is a $2$-jumping line if and only if $L\subset S^{-}$ (i.e., $L$ is a tangent line of the twisted cubic $C_3$). If the multiplicity type of the intersection $L\cap S$ of $L$ and $S$ is $(3,1)$, then $L$ is an $1$-jumping line. Otherwise, all of the lines are $0$-jumping lines.
\end{proposition}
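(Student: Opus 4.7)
The plan rests on two pillars: (i) expressing the jumping index of $\cR|_L$ as a cohomological invariant, and (ii) using the $\SL_2$-equivariance of both $\cR$ and $S^{-}$ to reduce the general claim to a finite check on orbit representatives.

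Since $c_1(\cR)=0$, any restriction splits as $\cR|_L\cong\cO_L(-j)\oplus\cO_L(j)$ with a unique $j\geq 0$, and
\[
j = h^0\bigl(L,\,\cR|_L(-1)\bigr).
\]
From the resolution \eqref{resGor}, the sheaf $\cR(-1)$ is obtained as $\ker(\phi_1)\otimes\cO_{\PP^3}(4)$ sitting in $\cO_{\PP^3}^{\oplus 8}(1)$, up to the contribution from $\phi_2$. Restricting this presentation to $L\cong\PP^1$ and passing to the long exact sequence of cohomology expresses $h^0(L,\cR|_L(-1))$ as the corank of an explicit linear map between finite-dimensional spaces whose matrix entries are the restrictions of the seven $\SL_2$-invariant quadric generators of $q^{\bot}$ (in the weight basis of Lemma \ref{invar7dim}) to $L$.

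Next, because $\cR$ and $S^{-}$ are both $\SL_2$-equivariant, the jumping index depends only on the $\SL_2$-orbit of $L$ in $\Gr(2,4)$, and the three intersection patterns listed in the statement are each $\SL_2$-invariant conditions on $L$. I would then pick one representative line from each stratum — a tangent line to the twisted cubic $C_3$ (which can be taken as the line spanned by two adjacent weight vectors such as $u_3^{*}, u_1^{*}$), a line exhibiting $(3,1)$ contact with $S^{-}$ (detected by plugging a parametrization of $L$ into the explicit quartic equation for $S^{-}$ given in the remark after Lemma \ref{invar7dim}), and a generic line — and compute $h^0(L,\cR|_L(-1))$ directly on each representative. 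Obtaining the values $2$, $1$, $0$ respectively will prove the trichotomy by $\SL_2$-invariance.

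The main obstacle will be Step 2: one must track the auxiliary syzygy map $\phi_2$ carefully enough that the cokernel computation on $L$ is unambiguous, and identify a clean representative of the codimension-two $(3,1)$ stratum. As elsewhere in the paper, a Macaulay2 verification is likely the most efficient route; by hand, the weight decomposition built into the basis \eqref{vectori6} should let one diagonalize $\phi_1|_L$ block by block and read off the corank from the multiplicity type of $L\cap S^{-}$.
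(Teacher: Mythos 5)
Your first two steps --- reducing the jumping index to $j=h^0(L,\cR|_L(-1))$ (valid because $\cR$ has rank $2$ and $c_1=0$) and extracting this number from the restriction of the resolution \eqref{resGor} --- are exactly what the paper's one-sentence proof means by checking the claim ``by the defining resolution,'' so that part of the plan is on target. One technical point you only partly acknowledge: $\cR$ is defined as $\ker(\phi_1)\otimes\cO_{\PP^3}(5)$, and restriction to $L$ does not commute with taking kernels; a $\Tor_1$ correction term appears precisely along the special lines you care about, so the ``corank of an explicit linear map'' must be read off from the whole restricted complex, not from the naive kernel of $\phi_1|_L$.

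The genuine gap is in your equivariance step. The jumping index is indeed constant on $\SL_2$-orbits, and the tangent lines of $C_3$ do form a single orbit (the transitive action on $C_3\cong\PP^1$ carries tangent lines to tangent lines), so one representative settles that stratum. But the ``otherwise'' stratum is $4$-dimensional while $\dim\SL_2=3$, so it is a positive-dimensional family of orbits, and computing $j=0$ on one representative proves nothing about the others; the same objection applies to the $2$-dimensional $(3,1)$ stratum unless you first show it is a single orbit --- and even a dense-orbit argument combined with semicontinuity only yields $j\ge 1$ on its closure, never the needed upper bound $j\le 1$. To prove the trichotomy as stated you must make a global identification: describe the closed loci $\{L:\ h^0(\cR|_L(-1))\ge k\}$, $k=1,2$, as determinantal subschemes of $\Gr(2,4)$ (for instance via the direct image of $\cR(-1)$ under the point--line incidence correspondence, whose degeneracy loci are cut out by $\SL_2$-invariant equations) and match these with the contact strata of $S^{-}$; alternatively, run your corank computation with the Pl\"ucker coordinates of $L$ as indeterminates, i.e.\ at the generic point of each stratum. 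Checking finitely many coordinate lines, as in Example \ref{kjump}, illustrates the proposition but does not prove it.
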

\begin{proof}
It is straightforward to check the properties of the jumping lines by the defining resolution of $\cR$ in \eqref{resGor}.
\end{proof}
\begin{example}\label{kjump}
Let us denote by $l_{i, j}$ the line supported on the $i$ and $j$-the coordinates of $\PP^3$. The line $l_{3,1}$ and $l_{-1,-3}$ are a $2$-jumping line. $l_{3,-1}$ and $l_{1,-3}$ are an $1$-jumping line. But the lines $l_{3,-3}$ and $l_{1,-1}$ are a $0$-jumping line. 
\end{example}

On the other hand, to find the net $q$ of quadrics in $\Sym^2W_3^*$, we need to be careful about choosing the basis. To avoid the confusion, we pick up the basis of $W_3^*$ as $u_{i}^*$. Then the invariant $3$-dimensional net in $\Sym^2W_3^*$ is given by
\[q=\langle 3u_{-3}^*u_{1}^*-u_{-1}^{*2},\;9u_{3}^*u_{-3}^*-u_{1}^*u_{-1}^*,\;3u_{-1}^*u_{3}^*-u_{1}^{2*}\rangle\]
since $e{\cdot}(3u_{-3}^*u_{1}^*-u_{-1}^{*2})=0$ and the action of $f$ on the highest weight vector generates the other ones. Now the polar pairing $\Sym^2W_3^*\otimes \Sym^2W_3\lr \CC$
is given by \[u_{\bold{k}}^{*}\otimes u_{\bold{m}}\mapsto\delta_{\bold{km}} {2 \choose \bold{k}}^{-1}=\frac{1}{2}\delta_{\bold{km}}\partial_{u_{\bold{m}}}(u_{\bold{m}})\]where $\bold{k}$ and $\bold{m}$ are multi-indexed. 
The polar ideal $q^{\bot}$ of the net $q$ of quadrics under this pairing is exactly the space $q^{\bot}=U_6$ in \eqref{vectori6} of Lemma \ref{invar7dim}, which matches with the construction in Section 4 of the paper \cite{Sch91}.

Returning to the problem of the construction of $X$ as the sublocus of the Hilbert scheme of twisted cubics in $\PP(W_3^*)$, let us fix the ordered basis of $U_6$ in \eqref{vectori6}:
\begin{equation}\label{ordw}
\begin{split}
w_6:&=u_3^2, w_4:=u_{3}u_1, w_2:=3u_1^2+2u_{3}u_{-1}, w_0:=9u_1u_{-1}+u_{3}u_{-3},\\ 
w_{-2}:&=3u_{-1}^2+2u_1u_{-3}, w_{-4}:=u_{-1}u_{-3}, w_{-6}:=u_{-3}^2.
\end{split}
\end{equation}
There exists an induced $\fsl_{2}$-action on the basis $w_i$ as follows.
\begin{gather*}
f{\cdot}w_6=6w_4, f{\cdot}w_4=w_2, f{\cdot}w_2{\cdot}=2w_0, f(w_0)=6w_{-2}, f{\cdot}w_{-2}=10w_{-4}, f{\cdot}w_{-4}=w_{-6}, f{\cdot}w_{-6}=0,\\
e{\cdot}w_6=0, e{\cdot}w_4=w_6, e{\cdot}w_2=10w_4, e{\cdot}w_0=6w_{2}, e{\cdot}w_{-2}=2w_{0}, e{\cdot}w_{-4}=w_{-2}, e{\cdot}w_{-6}=6w_{-4}.
\end{gather*}
Let $w_i^*$ be the standard dual basis of $U_6$. Note the weight is reversed. Then
\begin{proposition}\label{altsl2}
The unique $3$-dimensional $\fsl_{2}$-subrepresentation $N$ of $\bigwedge^2 U_6^*$ are generated by
\begin{equation}\label{altform}
\begin{array}{cc}
\frac{3}{5}w_{-6}^*\wedge w_{4}^*-w_{-4}^*\wedge w_{2}^*+6w_{-2}^*\wedge w_{0}^*,&\; -\frac{1}{5}w_{-4}^*\wedge w_{4}^*+\frac{9}{5}w_{-6}^*\wedge w_{6}^*+w_{-2}^*\wedge w_{2}^*,\\
 -w_{-2}^*\wedge w_{4}^*+\frac{3}{5}w_{-4}^*\wedge w_{6}^*+6w_{0}^*\wedge w_{2}^*.&
\end{array}
\end{equation}
\end{proposition}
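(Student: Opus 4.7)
The plan is to combine the Clebsch--Gordan decomposition of $\bigwedge^2 U_6^*$ with an explicit construction of a highest weight vector. Since $U_6\cong\Sym^6\CC^2$ is a self-dual irreducible $\fsl_2$-representation, its antisymmetric square decomposes as
\[
\bigwedge^{2} U_6^{*}\ \cong\ \Sym^{10}\CC^2\oplus \Sym^{6}\CC^2\oplus \Sym^{2}\CC^2,
\]
with summands of dimensions $11$, $7$, and $3$. Since these dimensions are distinct, the unique $3$-dimensional $\fsl_2$-subrepresentation $N$ is the $\Sym^2\CC^2$ summand, and it is determined by a single highest weight vector of weight $2$.

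To locate that vector, recall that $w_i^*$ carries weight $-i$, so the weight-$2$ part of $\bigwedge^2 U_6^*$ is spanned by $w_{-6}^{*}\wedge w_4^{*}$, $w_{-4}^{*}\wedge w_2^{*}$, and $w_{-2}^{*}\wedge w_0^{*}$. Only $\Sym^{10}\CC^2$ and $\Sym^6\CC^2$ contribute to weight $4$, so the kernel of $e$ on the weight-$2$ subspace is exactly $1$-dimensional. Writing $v=a\,w_{-6}^{*}\wedge w_4^{*}+b\,w_{-4}^{*}\wedge w_2^{*}+c\,w_{-2}^{*}\wedge w_0^{*}$ and imposing $e\cdot v=0$ using the contragredient action on $\{w_i^*\}$ (deduced from the tabulated action on $\{w_i\}$ through the Leibniz rule) yields a small linear system whose solution, up to scalar, is the first vector of \eqref{altform}.

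The remaining two generators are then obtained by applying $f$. A direct computation with the contragredient $f$-action on $\{w_i^*\}$ gives $f\cdot v_1=-2v_2$ and $f\cdot v_2=-v_3$, where $v_2$ and $v_3$ denote the second and third vectors listed in \eqref{altform}. Since the $f$-string starting from a nonzero highest weight vector of $\Sym^2\CC^2$ spans the entire representation, the three vectors in \eqref{altform} form a basis of $N$, which by the first paragraph is the unique $3$-dimensional $\fsl_2$-subrepresentation.

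The main obstacle is the bookkeeping of the contragredient $\fsl_2$-action on $U_6^*$: the nontrivial structure constants in the tabulated action on $\{w_i\}$ (for instance $e\cdot w_2=10 w_4$ or $f\cdot w_{-2}=10 w_{-4}$) translate, via $e\cdot w_{j+2}^*=-c_j w_j^*$ and its $f$-analogue, into dual coefficients that must be applied carefully on wedge products. Once that table is set down, the verification reduces to a pair of short linear-algebra computations whose outputs agree with \eqref{altform} up to the stated scalars.
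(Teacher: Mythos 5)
Your proof is correct and follows essentially the same route as the paper: solve $e\cdot v=0$ on the three-dimensional weight-$2$ subspace of $\bigwedge^2 U_6^*$ to obtain the highest weight vector (the same linear system $10a+6b=6b+c=0$), then apply $f$ twice to generate the rest. Your explicit Clebsch--Gordan decomposition $\bigwedge^{2}U_6^{*}\cong \Sym^{10}\CC^2\oplus\Sym^{6}\CC^2\oplus\Sym^{2}\CC^2$ is a welcome addition that justifies the uniqueness the paper leaves implicit, and the sign discrepancy ($f\cdot v_1=-2v_2$ versus the paper's $\tfrac{1}{2}f\cdot w^*=v_2$) is only a matter of the contragredient-action convention and does not affect the resulting span.
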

\begin{proof}
The highest weight vector $w^*$ in $\eta$ is of the form $w^*=a w_{-6}^*\wedge w_{4}^*+b w_{-4}^*\wedge w_{2}^*+c w_{-2}^*\wedge w_{0}^*$. Since $e{\cdot}w^*=0$, we get $10a+6b=6b+c=0$. Therefore one can choose $a=\frac{3}{5}$, $b=-1$, $c=6$. By the action of the operator $f$, we obtain
\[
\frac{1}{2}f{\cdot}w^*=-\frac{1}{5}w_{-4}^*\wedge w_{4}^*+\frac{9}{5}w_{-6}^*\wedge w_{6}^*+w_{-2}^*\wedge w_{2}^*,\; \frac{1}{2}f^2{\cdot}w^*=-w_{-2}^*\wedge w_{4}^*+\frac{3}{5}w_{-4}^*\wedge w_{6}^*+6w_{0}^*\wedge w_{2}^*.
\]
\end{proof}

\begin{remark}
Matrix form of the net $\eta$ of alternative forms in Proposition \ref{altsl2} is
\[\eta=
\begin{pmatrix}0& 0& 0& 0& 0& -\frac{3}{5}y_2& -\frac{9}{5}y_0\\
0& 0& 0& 0& y_2& \frac{1}{5}y_0& -\frac{3}{5}y_{-2}\\
0& 0& 0& -6y_2& -y_0& y_{-2}& 0\\
0& 0& 6y_2& 0& -6y_{-2}& 0& 0\\
0& -y_2& y_0& 6y_{-2}& 0& 0& 0\\
\frac{3}{5}y_2& -\frac{1}{5}y_0& -y_{-2}& 0& 0& 0& 0\\
\frac{9}{5}y_0& \frac{3}{5}y_{-2}& 0& 0& 0& 0& 0
\end{pmatrix}
\]
for the dual basis $y_2$, $y_{0}$, $y_{-2}$ of the ordered generators \eqref{altform} of the net $N\subset \bigwedge^2 U_6^*$.
\begin{enumerate}
\item To find the net of alternative forms on $U_6$, we should study the tor-algebra structure of the Artinian-Gorenstein ring
\[\CC[u_{3},u_{1},u_{-1}, u_{-3}]/q^{\bot}\]
where the ideal $q^{\bot}$ is generated by $U_6$ (\cite[Section 5]{Sch01}). One can show that the net matches with $\eta$ by using Macaulay2 (\cite{M2}).
\item The Pfaffians of the principle $6\times 6$ minor matrix of the net $\eta$ is generated by
\[
\langle 
y_2^3, y_2^2y_0, y_2y_0^2+y_2^2y_{-2}, y_0^3+6y_2y_0y_{-2}, y_{0}^2y_{-2}+y_{2}y_{-2}^2, y_0y_{-2}^2, y_{-2}^3
\;\rangle.
\]
Hence the polar form of the pairing: $\Sym^3 N\otimes \Sym^3 N^*\lr \CC$ is the double conic:
\begin{equation}\label{doublecon}
F:=(y_2y_{-2}+\frac{1}{4}y_{0}^2)^2=0.
\end{equation}
\end{enumerate}
\end{remark}
\begin{proposition}[\protect{\cite[Section 5]{Sch01}, \cite[Lemma 4.1]{Fae14}}]\label{lacx}
The Hilbert scheme of lines (resp. conics) in the MU-variety $X$ is isomorphic to $\bH_1(X)\cong \{F=0\}$ in \eqref{doublecon} (resp. $\bH_2(X)\cong \PP(N)$). Hence the $\CC^*$-fixed lines and conics are isolated ones.
\end{proposition}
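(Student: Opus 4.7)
The plan is to combine the structural results from Propositions \ref{linesinx} and \ref{conicsinx} with the explicit representation-theoretic setup developed in this section.

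For the conic assertion, Proposition \ref{conicsinx} already provides $\bH_2(X) \cong \PP(W_2)$ with $W_2 \cong \Sym^2 \CC^2$ as an $\SL_2$-representation. By Proposition \ref{altsl2}, the net $N$ of $\SL_2$-invariant alternating forms is also three-dimensional with weights $2, 0, -2$, so $N \cong \Sym^2 \CC^2$. To make this identification canonical, I would associate to a smooth conic $C \subset X$ the $5$-dimensional linear span $\langle V_3 : [V_3] \in C \rangle \subset U_6$ and use the restriction of the net of alternating forms to this span to pick out a well-defined element of $\PP(N)$, producing the desired isomorphism $\bH_2(X)\cong \PP(N)$.

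For the line assertion, I would proceed in two steps. First, by Proposition \ref{linesinx}(2) the Hilbert scheme $\bH_1(X)$ is a planar double conic, and I would identify the ambient $\PP^2$ with $\bH_2(X) = \PP(N)$ via the map $L \mapsto L^2$ sending a line to its canonical planar double structure; this is well-defined and injective because $\dim\Ext^1(\cO_L,\cO_L(-1)) = h^0(N_{L/X}(-1)) = 1$ by Proposition \ref{linesinx}(2), and the image is exactly the conic $Q \subset \bH_2(X)$ of Proposition \ref{conicsinx}. Second, I would show that the reduced equation of $Q$ inside $\PP(N)$ is $y_2 y_{-2} + \tfrac{1}{4} y_0^2 = 0$. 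This has essentially been set up already: in \eqref{doublecon}, $F$ is exhibited as the polar form of $\Sym^3 N \otimes \Sym^3 N^* \to \CC$ induced from the principal $6\times 6$ sub-Pfaffians of the matrix $\eta$, which precisely encodes when the net $\eta$ fails to produce an isotropic pencil of $3$-planes. The double structure arises because the scheme structure on $\bH_1(X)$ has tangent dimension two at each closed point, again as dictated by $h^0(N_{L/X}) = 2$ from Proposition \ref{linesinx}(2).

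The isolated fixed-point statement then follows immediately: since the maximal torus $\CC^* \subset \SL_2$ acts on $N$ with distinct weights $2,0,-2$, the projective space $\PP(N)$ has exactly three $\CC^*$-fixed points $[y_2]$, $[y_0]$, $[y_{-2}]$. All three are isolated fixed conics in $\bH_2(X)$, while only $[y_2]$ and $[y_{-2}]$ lie on the conic $y_2 y_{-2} + \tfrac{1}{4} y_0^2 = 0$, giving the isolated $\CC^*$-fixed lines, consistent with Example \ref{kjump}.

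The main technical hurdle is the explicit Pfaffian verification that produces the equation $y_2 y_{-2} + \tfrac{1}{4}y_0^2 = 0$ for the reduced locus $Q$. This amounts to a linear-algebraic computation on the $7 \times 7$ matrix $\eta$ of Proposition \ref{altsl2}, which is tedious by hand but straightforward with a computer algebra system such as Macaulay2; everything else reduces to unpacking the definitions and applying the $\SL_2$-equivariance already established in this section.
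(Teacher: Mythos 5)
The paper does not prove this proposition at all: it is imported directly from \cite{Sch01} and \cite{Fae14}, so there is no internal argument to compare yours against. Judged on its own terms, your reconstruction is structurally sensible and would deliver the fixed-point conclusion, but it has a genuine gap at exactly the step where the cited references do the real work. Your identification of the reduced conic $Q\subset\PP(N)$ parameterizing double lines with $\{y_2y_{-2}+\tfrac14 y_0^2=0\}$ rests on the assertion that the Pfaffian degeneracy locus of $\eta$ ``precisely encodes when the net fails to produce an isotropic pencil of $3$-planes.'' That is not something Macaulay2 can check for you: the Pfaffian computation only produces the polynomial $F$, whereas the claim that the rank-drop locus of the net of skew forms on $U_6$ coincides with the locus of lines (equivalently, with the image of $L\mapsto L^2$) in $X=\Gr^{\omega}(3,q^{\bot})$ is a geometric theorem, and it is precisely the content of \cite{Sch01} and \cite{Fae14}. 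Likewise, your construction of the isomorphism $\bH_2(X)\cong\PP(N)$ via $5$-dimensional spans and restricted alternating forms is plausible but unverified (why is the span $5$-dimensional, and why does restriction single out a unique element of $\PP(N)$?).

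For the way the proposition is actually used later (isolated $\CC^*$-fixed lines and conics), your argument can be closed without the Pfaffian identification. Proposition \ref{conicsinx} already gives $\bH_2(X)\cong\PP(W_2)$ with $W_2\cong\Sym^2\CC^2$ $\SL_2$-equivariantly, so the three fixed points of $\bH_2(X)$ are isolated by the distinct weights $2,0,-2$; this is all the conic statement you need, and it makes your span construction unnecessary. For lines, $\bH_1(X)_{\mathrm{red}}$ is a smooth conic by Proposition \ref{linesinx}(2), and your injective map $L\mapsto L^2$ into $\bH_2(X)$ shows $\SL_2$ cannot act trivially on it (otherwise $\bH_2(X)$ would contain a positive-dimensional $\CC^*$-fixed locus, contradicting the three isolated fixed points); a nontrivial $\SL_2$-action on $\PP^1$ factors through $\mathrm{PGL}_2$ and has exactly two isolated $\CC^*$-fixed points. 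This yields the isolatedness claim cleanly, while the precise equation $y_2y_{-2}+\tfrac14y_0^2=0$ should simply be attributed to the references, as the paper does.
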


\section{Hilbert scheme of rational curves of degree $4$ in $X$}
In section \ref{31}, we present the deformation space of twisted cubics in $X$ for the study of $\bH_4(X)$. Since the maximal torus action on $\bH_3(X)\cong \PP^3$ has four fixed points, it is enough to consider these cases only. In the subsection \ref{42}, we prove that $\bH_4(X)$ is smooth and obtain the Poincar\'e polynomial by computing the weights of tangent space.

Since the maximal torus action on Grassimannian $\Gr(3,U_6)$ is inherited from the $\SL_2$-action on $U_6$, it is well-defined on the MU-variety $X$. For the standard Schubert cell $V_{k}^{\circ}$ in Grassmannian variety $\Gr(3,U_6)$, let $V_{k}=X\cap V_{k}^{\circ}$ be the open chart in $X$. One can easily compute the weight and thus the variety $X$ has four affine charts around the fixed point in $\PP(\wedge^3U_6)$ as follows.
\begin{align*}
p_{12}=&\PP(w_6\wedge w_4\wedge w_2),\;p_{10}=\PP(w_6\wedge w_4\wedge w_2),\\
\;p_{-10}=&\PP(w_0\wedge w_{-4}\wedge w_{-6}), p_{-12}=\PP(w_{-2}\wedge w_{-4}\wedge w_{-6}).
\end{align*}
Because of the symmetry, it is enough to consider the first two affine charts around $p_{12}$ and $p_{10}$. Let
\[
P_3=
\begin{pmatrix}1&0&0&a_1&a_2&a_3&a_4\\0&1&0&a_5&a_6&a_7&a_8\\0&0&1&a_9&a_{10}&a_{11}&a_{12}\end{pmatrix}
\]
be the affine chart of $\Gr(3, U_6)$ around the point $p_{12}$ (similarily $p_{10}$). Then the affine chart of $X$ is given by the condition $P_3\eta P_3^T=O$ and thus $V_{12}\subset X$ is defined as the nine equations:
\begin{equation}\label{eqV12}
\begin{split}
\langle 5a_2+3a_7, 
a_3+9a_8,&
2a_2a_5-2a_1a_6+1/5a_4,
10a_1-a_{11}, 5a_2-9a_{12},\\
6a_2a_9-6a_1a_{10}-a_3,& 6a_5+a_{10}, 5a_6+a_{11}, 2a_6a_9-2a_5a_{10}-1/3a_7-1/5a_{12}\rangle.
\end{split}
\end{equation}
Also the ordered basis of $U_6$ in \eqref{ordw} provides the defining equation of the universal curve $\cC\subset  \PP^3\times V_{12}$. That is, the universal curve $\cC$ is defined by the relations \[P_3\begin{pmatrix}w_6&w_4&\cdots&w_{-4}&w_{-6}\end{pmatrix}^T=O\;\text{and}\;P_3\eta P_3^T=O.\] Note that the universal curve $\cC$ can be written as the maximal minors of the $3\times 2$ matrix:
\begin{equation}\label{equnivV12}
\begin{pmatrix}6u_1-a_{10}u_{-3}&12u_{-1}+6a_9u_{-3}\\ -2u_3-3a_{10}u_{-1}-a_{11}u_{-3}&
6u_1+18a_9u_{-1}+5a_{10}u_{-3}\\
-3a_{11}u_{-1}-12a_{12}u_{-3}&-10u_3+15a_{10}u_{-1}+4a_{11}u_{-3}
\end{pmatrix}
\end{equation}
which is similar to the presentation of twisted cubics in the case of the projective space (cf. \eqref{univcubics}).
\subsection{Generators of the deformation space of the Hilbert scheme $\bH_3(X)$}\label{31}
It is very well-known that the deformation space of the Hilbert scheme is
\[
\rH^0(X, \cH om (\cI_C, \cO_C))=\Hom_{X} (\cI_C, \cO_C)
\]
for a curve $C\subset X$.\\
Hence we will compute the deformation space $\Hom (\cI_C, \cO_C)$ and its weights by $\CC^*$-action for each affine chart of $X$. Since the $\CC^*$-action on $\bH_3(X)=\PP(W_3^*)$ is a standard one, the fixed points are coordinates points of $\PP(W_3^*)$. Geometrically, it parameterizes two types of curves: Triple line and smooth conic with a line. Clearly, the relevant line and conic are also $\CC^*$-fixed ones.

Let us denote by the coordinate points of $\PP^3$ as $p_{3}=[1:0:0:0]$, $p_{1}=[0:1:0:0]$, $p_{-1}=[0:0:1:0]$, and $p_{-3}=[0:0:0:1]$.
\begin{proposition}\label{deftrip}
Let $C$ be the triple line parameterized by the fixed point $p_{-3}\in \bH_3(X)=\PP(W_3^*)$. Then the generators of the deformation of $\bH_3(X)$ at $[C]$ are given by the weights: $6$, $4$ and $2$.
\end{proposition}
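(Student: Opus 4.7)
The plan is to invoke the smoothness of $\bH_3(X)$ and then read the three weights off the $\SL_2$-representation structure of $W_3^*$. By Proposition \ref{h3sm}, $\bH_3(X)\cong \PP(W_3^*)$ is smooth, so the deformation space $\Hom_X(\cI_C,\cO_C)$ at $[C]=p_{-3}$ is three-dimensional and coincides as a $\CC^*$-representation with the tangent space $T_{p_{-3}}\PP(W_3^*)$. The task therefore reduces to identifying the three weights of this tangent representation.

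From the $\fsl_2$-action on $W_3^*$ recorded earlier in the paper, the relation $e\cdot u_{-3}^*=0$ identifies $u_{-3}^*$ as the highest weight vector of the irreducible four-dimensional $\SL_2$-module $W_3^*$, so $u_3^*, u_1^*, u_{-1}^*, u_{-3}^*$ is a string of weight vectors with consecutive differences equal to $2$. Via the canonical description
\[
T_{p_{-3}}\PP(W_3^*) \;\cong\; \Hom\bigl(\langle u_{-3}^*\rangle,\, W_3^*/\langle u_{-3}^*\rangle\bigr),
\]
the tangent space splits into three one-dimensional weight spaces, whose weights are the differences between the weight of $u_{-3}^*$ and those of $u_3^*, u_1^*, u_{-1}^*$. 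These come out (under the paper's orientation convention for the torus) to be precisely $6$, $4$, and $2$.

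A concrete consistency check is provided by the affine chart $V_{12}$ cut out by the equations \eqref{eqV12}. Linearizing the nine defining relations at the origin eliminates nine of the twelve coordinates $a_1,\ldots,a_{12}$, leaving $a_1$, $a_5$, $a_9$ as the three independent infinitesimal directions; their $\CC^*$-weights, read off directly from the row and column weights of the parameterization matrix $P_3$, match $6,4,2$ up to the sign of the torus orientation, consistent with the universal-family presentation \eqref{equnivV12}. Since smoothness of $\bH_3(X)$ is already in hand, there is no obstruction calculation to perform; the only genuinely delicate step is keeping track of the torus-orientation convention, which is mechanical.
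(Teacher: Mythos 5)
Your argument is correct in substance but takes a genuinely different route from the paper. You deduce the weights purely from representation theory: since $\bH_3(X)\cong\PP(W_3^*)$ is smooth and the identification is $\SL_2$-equivariant, the deformation space $\Hom_X(\cI_C,\cO_C)$ coincides with the tangent space $\Hom(\langle u_{-3}^*\rangle, W_3^*/\langle u_{-3}^*\rangle)$, whose three weights are the consecutive differences $6,4,2$ up to the overall sign of the torus. The paper instead works on the Grassmannian side: it writes down the ideal of the triple line $C$ in the affine charts $V_{12}$ and $V_{10}$ of $X$, computes the sheaf $\cHom(\cI_C,\cO_C)$ on each chart with Macaulay2, and determines which local sections extend globally, arriving at the explicit matrix of generators \eqref{genssec3}. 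Your route is shorter and avoids the computer calculation, but it only produces the weights, whereas the paper's computation also yields the explicit generators as homomorphisms $\cI_C\to\cO_C$, and these are used downstream (in the proof of Proposition \ref{mult4line} one must identify which column of \eqref{genssec3} spans the kernel of the map $\Psi$); so your proof suffices for the literal statement but not for its later applications. Two caveats: your ``consistency check'' linearizes \eqref{eqV12} and thereby computes $T_{p_{12}}X$, the tangent space of the threefold $X$ at a torus-fixed point, which is a different (if coincidentally also three-dimensional) object from $\Hom_X(\cI_C,\cO_C)$, so it does not actually corroborate the main computation; and the sign and normalization of the weights is genuinely delicate here --- the paper's own proof ends with weights $4$, $8$, $6$ rather than the $6$, $4$, $2$ of the statement --- so your appeal to ``the paper's orientation convention'' is doing real work and should be pinned down explicitly rather than waved at.
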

\begin{proof}
Since we know that the Hilbert scheme $\bH_3(X)$ is smooth, it is enough to collect the generators of given weights.
From the equations \eqref{eqV12} and \eqref{equnivV12}, one can read that the defining ideal of $C$ is given by $\langle a_4, a_8, a_{12}\rangle$ over the open subset $V_{12}$ (resp. the ideal $\langle a_{12}, a_8, a_7, 5a_6+a_{11}, 6a_5+a_{10}, a_4, a_3, a_2, 10a_1-a_{11}, a_{11}^2, a_{10}a_{11}, 5a_{10}^2-6a_9a_{11}
\rangle$ over $V_{12}^{\circ}$). By using Macaulay2 (\cite{M2}), one can show that the sheaf $\cH om (\cI_C, \cO_{C})$ over $V_{12}$ associates to
\[
\cH om (\cI_C, \cO_{C})(V_{12})=(\bigoplus_{i\neq 3, 6} \psi_i) \oplus (\CC[a_9,a_{10}, a_{11}]/\langle  a_{10}^2,a_{10}a_{11}, a_{11}^2\rangle)(\psi_3\oplus\psi_6)
\]
such that the generators $\psi_i$, $1\leq i \leq 6$ are given the column vectors
\begin{equation}\label{eq:gensall1}
\begin{pmatrix}
0&-3a_{11}&-5a_{10}&0&0&-2a_{11}\\
a_{11}&a_{10}&2a_9&0&0&0\\
0&0&0&a_{10}&a_{11}&5a_{9}
\end{pmatrix}
\end{equation}
with respect to the ordered generators $\{a_4, a_8, a_{12}\}$ of the ideal $\cI_C|_{V_{12}}$. On the other hand, around the fixed point $p_{10}$, let us denote $V_{10}^{\circ}$ by the $3\times 7$ matrix $Q_3$ of $b_i$, ($1\leq i\leq 12$) entries with proper positions. Then the ideal of $C$ is given as $\langle b_4, b_8, b_{12}\rangle$ over the open subset $V_{10}$ and thus the sheaf $\cH om (\cI_C, \cO_{C})$ over $V_{10}$ is
\[
\cH om (I_C, \cO_{C})(V_{10})=(\CC[b_9,b_{10}, b_{11}]/\langle b_{10}^2, b_{10}b_{11}, b_{11}^2\rangle) \phi_1 \bigoplus_{i=2}^{4} (\CC[b_9])\phi_i
\]
such that the generators $\phi_i$, $1\leq i \leq 4$ are given the column vectors
\begin{equation}\label{eq:genshom31}
\begin{pmatrix}\phi_1 &\phi_2 &\phi_3 &\phi_4 \end{pmatrix}=\begin{pmatrix}
0&5b_{10}&0&-3b_{11}\\
0&-2&b_{11}&b_{10}\\
1&0&0&0
\end{pmatrix}
\end{equation}
with respect to the ordered generators $\{b_4, b_8, b_{12}\}$. Surely, the coordinate change among the matrices $P_3$ and $Q_3$ is nothing but the elementary operation of matrices. By considering the $\CC^*$-weights of the deformation space in each chart, one can see that the global Hom-space $\Hom(\cI_C, \cO_C)$ is generated by extending the following column vectors in $V_{12}$:
\begin{equation}\label{genssec3}
\begin{pmatrix} a_1&0&0\\
a_5 &-\frac{1}{5}a_{11}&0\\
a_{9}&a_{10}&a_{11}
\end{pmatrix}=\begin{pmatrix} \frac{a_{11}}{10}&0&0\\
-\frac{a_{10}}{6} &-\frac{1}{5}a_{11}&0\\
a_{9}&a_{10}&a_{11}
\end{pmatrix}.
\end{equation}
For example, the first column $\phi_1$ over the open chart $V_{10}$ (of degree $1$ part) in \eqref{eq:genshom31} becomes the vector $-\frac{1}{6}\psi_2+\frac{1}{5}\psi_6$ in \eqref{eq:gensall1}. But the other generators $\phi_i$, $2\leq i\leq 4$ of any degrees can not be extended into the chart $U_{12}$. Since the weight of $\CC^*$-action are compatible with Pl\"{u}cker embedding of $X\subset\PP(\wedge^3U_6)$, the weights of the columns in \eqref{genssec3} are $4$, $8$, and $6$.
\end{proof}
\begin{proposition}[\protect{\cite[cf. Proposition 3.5]{CL17}}]\label{smlq}
Let $C$ be the curve parameterized by the fixed point $p_{-1}\in \bH_3(X)=\PP(W_3^*)$. Then,
\begin{enumerate}
\item the curve $C=L\cup Q$ consists of the union of the line $L$ and the conic $Q$.
\item \begin{equation}\label{lqn}N_{C/X}|_L\cong \cO_L(1)\oplus \cO_L(-1).\end{equation}
\item The weight of the tangent space of Hilbert scheme $\bH_3(X)$ at $[C]$ are $4$, $2$ and $-2$.
\end{enumerate}
\end{proposition}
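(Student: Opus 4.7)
The plan is to adapt the method of Proposition \ref{deftrip}. I would work in the four affine charts $V_{12}, V_{10}, V_{-10}, V_{-12}$ of $X$ about the torus-fixed points, combine the universal matrix \eqref{equnivV12} (and its three symmetric analogues) with the defining relations \eqref{eqV12} to determine the ideal of $C$ locally, and assemble the global picture. For part (1), substituting the homogeneous coordinates $[u_3 : u_1 : u_{-1} : u_{-3}] = [0 : 0 : 1 : 0]$ of $p_{-1}$ into \eqref{equnivV12} and computing the $2 \times 2$ minors shows that $C \cap V_{12}$ is a smooth affine line, and repeating the computation in the other three charts produces a second smooth conic component attached at a single point. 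Equivalently, one checks directly that $p_{-1}$ satisfies the quartic equation of $S^-$ but not the quadric generators of $I_{C_3}$, so $p_{-1} \in S^- \setminus C_{\text{red}}$; Lemma \ref{setmapprpe} then asserts that $\bar{l}_{p_{-1}}$ is a smooth conic with an attached base point, yielding $C = L \cup Q$. Both components are $\CC^*$-fixed by equivariance, and Proposition \ref{lacx} identifies them among the finitely many torus-fixed lines and conics.

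For part (2), since $C$ is a nodal local complete intersection in the smooth threefold $X$, the normal sheaf $N_{C/X}$ is locally free of rank $2$. The inclusion $\cI_C \hookrightarrow \cI_L$, analyzed around the node $p := L \cap Q$ with the standard local model $\cI_L = (y, z)$, $\cI_C = (xy, z)$, produces a short exact sequence
\[
0 \to N_{L/X} \to N_{C/X}|_L \to \cO_p \to 0.
\]
Combining this with $N_{L/X} \cong \cO_L(1) \oplus \cO_L(-2)$ (Proposition \ref{linesinx}, MU case) and the resulting determinant computation $\det N_{C/X}|_L \cong \cO_L$, we obtain $N_{C/X}|_L \cong \cO_L(a) \oplus \cO_L(-a)$ with $a \in \{1, 2\}$. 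To exclude $a = 2$, I would apply the Mayer--Vietoris short exact sequence
\[
0 \to N_{C/X} \to N_{C/X}|_L \oplus N_{C/X}|_Q \to N_{C/X}|_p \to 0,
\]
using the analogous computation $N_{C/X}|_Q \cong \cO_Q(1) \oplus \cO_Q$ (which follows from $N_{Q/X} \cong \cO_Q^{\oplus 2}$, holding by Proposition \ref{conicsinx} because $Q$ is a smooth $\CC^*$-fixed conic and not a planar double line) together with the smoothness constraint $h^0(N_{C/X}) = \dim T_{[C]} \bH_3(X) = 3$. In the case $a = 2$, one has $h^0(N_{C/X}|_L) + h^0(N_{C/X}|_Q) = 3 + 3 = 6$ while $h^0(N_{C/X}|_p) = 2$, forcing $h^0(N_{C/X}) \geq 6 - 2 = 4$, a contradiction. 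Hence $a = 1$.

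For part (3), since $\bH_3(X) \cong \PP(W_3^*)$ is a standard projective space, the $\CC^*$-weights on $T_{p_{-1}} \PP(W_3^*)$ are (in the sign convention already employed in Proposition \ref{deftrip}) the differences $\text{wt}(u_{-1}^*) - \text{wt}(u_j^*)$ for $j \in \{3, 1, -3\}$. With $\text{wt}(u_i^*) = -i$, these evaluate to $4$, $2$, and $-2$.

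The principal obstacle is step (2), specifically the determination that $a = 1$ rather than $a = 2$; the short exact sequence and determinant argument leave both possibilities open, so the Mayer--Vietoris comparison relying on the smoothness of $\bH_3(X)$ is essential. An equivalent alternative, more in keeping with the computational spirit of Proposition \ref{deftrip}, is to extract $N_{C/X}|_L$ directly from the explicit generators of $\Hom(\cI_C, \cO_C)$ computed in each affine chart via Macaulay2, and read off the splitting type from the $\CC^*$-weights of those generators.
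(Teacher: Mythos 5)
Your proposal is correct, and it is worth recording where it tracks the paper and where it departs. For part (1) you do essentially what the paper does: read the ideal of $C$ off the universal matrix in the chart $V_{10}$ (the paper finds $\langle b_{10},b_{11},b_8b_9\rangle$, exhibiting the node at $p_{10}$); your cross-check via Lemma \ref{setmapprpe} is a valid alternative. For part (2) both arguments start from the elementary-modification sequence $\ses{N_{L/X}}{N_{C/X}|_L}{\CC_p}$ of \cite{HH83} together with $N_{L/X}\cong\cO_L(1)\oplus\cO_L(-2)$, and both ultimately rest on the smoothness of $\bH_3(X)$; the paper gets to the exclusion of $a=2$ faster, by deducing $\rH^1(N_{C/X}|_L)=0$ from $\rH^1(N_{C/X})=0$ and the sequence $\ses{N_{C/X}|_Q(-p)}{N_{C/X}}{N_{C/X}|_L}$, which kills the summand $\cO_L(-2)$ outright. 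Your Mayer--Vietoris $h^0$-count also works, but it needs the extra determination $N_{C/X}|_Q\cong\cO_Q(1)\oplus\cO_Q$, which you justify correctly from Proposition \ref{conicsinx} and a second elementary modification. For part (3) your route is genuinely different and more economical: you read the weights directly off $T_{p_{-1}}\PP(W_3^*)$ using the equivariance of the identification $\bH_3(X)\cong\PP(W_3^*)$, whereas the paper recomputes the generators of $\Hom(\cI_C,\cO_C)$ in the chart $V_{10}$. Your convention is internally consistent (it returns $6,4,2$ at $p_{-3}$, matching Proposition \ref{deftrip}, and $4,2,-2$ at $p_{-1}$), so for the statement as posed your shortcut suffices; what the paper's explicit chart computation buys, and your shortcut does not, is the actual list of generators of the deformation space, which is reused later in Propositions \ref{mult4line} and \ref{l2q}.
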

\begin{proof}
(1) The result comes from the equation of the restriction of the universal curve $\cC\subset V_{10}\times \PP^3$.
The ideal of the curve $C$ is generated by $\langle b_1,b_2,b_3+9b_8,b_4,b_5,b_6,b_7,b_8b_9,b_{10},b_{11},b_{12}\rangle$ over $V_{10}^\circ$. Hence the curve $C$ is defined by $\langle b_{10}, b_{11}, b_{8}b_{9}\rangle$ over the open chart $V_{10}$. Note that $L$ and $Q$ properly intersects at the point $p_{10}$.

(2) Let $L\cap Q=\{p:=p_{10}\}$ be the intersection point of $L$ and $Q$. By tensoring the normal bundle $N_{C/X}$ into the structure sequence $\ses{\cO_{Q}(-p)}{\cO_{C}}{\cO_L}$, we have an exact sequence
\[
\ses{N_{C/X}|_Q(-p)}{N_{C/X}}{N_{C/X}|_L}. 
\]
Since $\rH^1(N_{C/X})=0$, we have $\rH^1(N_{C/X}|_L)=0$. On the other hand, combining with an exact sequence (\cite[Corollary 3.2]{HH83})
\[\ses{N_{L/X}}{N_{C/X}|_L}{\CC_p}\] and $N_{L/X}=\cO_L(1)\oplus \cO_L(-2)$ ((2) of Proposition \ref{linesinx}), we obtain the result.

(3) By doing the same calculation as did the proof of Proposition \ref{deftrip}, the deformation space of the curve $C$ around $V_{10}$ is generated by the columns of the following matrix:
\[
\begin{pmatrix}
b_9&1&0\\
0&0&0\\
0&0&b_8
\end{pmatrix}
\]
with respect to the ordered generators $\{b_{10}, b_{11}, b_{8}b_{9}\}$, which can be extended to the global deformations. By counting the weights of these generators, we obtain the result.
\end{proof}

\begin{remark}
By slightly modifying the proof of the previous two propositions, one can prove that the Hilbert scheme $\bH_3(X)$ is smooth without using the exceptional collections of $X$ (Proposition \ref{h3sm}).
\end{remark}
\subsection{ $\CC^*$-fixed degree $4$ irreducible rational curves in $X$}\label{42}
From now on, we will study the Hilbert scheme $\bH_4(X)$. Since degenerated quartic curves are clearly supported on lower degree curves, we firstly seek $\CC^*$-fixed irreducible rational quartic curves in $X$ through the flop diagram in Proposition \ref{flopd}.

Let $\cU$ be a vector bundle on a smooth projective variety $Z$. Let us call by $\widetilde{C}$ a \emph{lifting} of a curve $C\subset Z$ of the projection map $\PP(\cU)\lr Z$ whenever $\widetilde{C}$ is a subscheme of $\widetilde{C}\subset\PP(\cU)$ given a section of the restricted projection map $\PP(\cU|_C)\lr C$.
\begin{proposition}\label{irrcomli}
Let $C$ be an irreducible quartic rational curve in $X$. Then there exists a lifting $\widetilde{C}$ of $C$ in $\PP(\cE)$ such that its image on $\PP^3$ along the flop map in Proposition \ref{flopd} is a line.
\end{proposition}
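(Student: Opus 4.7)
The plan is to exploit the flop diagram of Proposition~\ref{flopd}. By the definition of a lifting given just before the proposition, $\widetilde{C}\subset\PP(\cE)$ is a section of the ruled surface $\pi_+^{-1}(C)\cong \PP(\cE|_C)\to C\cong \PP^1$. Regarding $\widetilde{C}$ as a curve in the master space $\cM$ via $\hat{\pi}_+$ and then projecting to $\PP^3$ via $\widetilde{\pi}_-$ yields a rational curve in $\PP^3$ whose degree equals $h\cdot\widetilde{C}^{\cM}$, where $\widetilde{C}^{\cM}$ denotes the strict transform of $\widetilde{C}$ in $\cM$.

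First, I would apply the class relation $h=3e-H-d$ from Proposition~\ref{flopd}(2) together with $H\cdot\widetilde{C}^{\cM}=\deg_X C=4$ (since $\pi_+$ identifies $\widetilde{C}$ with the quartic $C$) to obtain
\[
h\cdot\widetilde{C}^{\cM}\;=\;3\,(e\cdot\widetilde{C}^{\cM})\;-\;4\;-\;(d\cdot\widetilde{C}^{\cM}).
\]
Thus the image in $\PP^3$ is a line mapped birationally from $\widetilde C$ precisely when
\[
e\cdot\widetilde{C}^{\cM}\;=\;2\qquad\text{and}\qquad d\cdot\widetilde{C}^{\cM}\;=\;1,
\]
the smallest nonnegative integer solution of $3(e\cdot\widetilde{C}^{\cM})-(d\cdot\widetilde{C}^{\cM})=5$. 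This corresponds to choosing a section of $\PP(\cE|_C)\to C$ whose quotient line bundle on $\PP^1$ has degree $2$ and which meets the flopping locus $\PP(\cS^+)$ simply.

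Second, I would realize such a section on the Hirzebruch surface $\PP(\cE|_C)$. Writing $\cE|_C\cong\cO_{\PP^1}(a)\oplus\cO_{\PP^1}(b)$, the sum $a+b$ is determined by $\deg_C\cE$, which can be read off from the defining sequence~\eqref{univcubics} of $\cE$ restricted to $C$, while $|a-b|$ is controlled by the geometry of $C\subset X$. A direct check then produces a surjection $\cE|_C\twoheadrightarrow\cO_{\PP^1}(2)$, giving the desired section with $e\cdot\widetilde{C}^{\cM}=2$. Finally, Lemma~\ref{setmapprpe} describes precisely how fibers of $\pi_+$ interact with the flopping locus $\PP(\cS^+)$, so that by moving within the available linear system one may ensure the section meets $\PP(\cS^+)$ transversally at a single point, giving $d\cdot\widetilde{C}^{\cM}=1$.

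The main obstacle is controlling the splitting type of $\cE|_C$ uniformly across all irreducible rational quartics $C\subset X$, and verifying the transversality of the chosen section with the exceptional locus of the flop: a pathological splitting could in principle force $\min(a,b)>2$ or prevent simple intersection with $\PP(\cS^+)$, and excluding such pathologies is where the real work of the proof lies. Once this is done, the degree relation above forces the image of $\widetilde{C}$ in $\PP^3$ to have degree one, hence to be a line.
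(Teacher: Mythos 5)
Your framework is the right one --- you set up the same intersection-theoretic computation via the relation $h=3e-H-d$ that the paper uses --- but the proposal stops exactly where the actual argument begins, and the way you propose to close the gaps would not work. You treat the splitting type of $\cE|_C\cong\cO_C(-2-m)\oplus\cO_C(-2+m)$ and the intersection number $d\cdot[\widetilde{C}]$ as two independent quantities to be pinned down by a ``direct check'' and a transversality/general-position argument, respectively. The paper's key observation is that neither needs to be computed directly: they are \emph{forced} by positivity. Taking $\widetilde{C}$ to be the section with $e\cdot[\widetilde{C}]=2-m$, the nefness of $h$ on $\PP^3$ gives $[\overline{C}]\cdot h=3(2-m)-4-d\cdot[\widetilde{C}]=2-3m-d\cdot[\widetilde{C}]\geq 0$, while $C\not\subset S^{+}$ gives $d\cdot[\widetilde{C}]\geq 1$; together these force $m=0$ and $d\cdot[\widetilde{C}]\in\{1,2\}$, and the case $d\cdot[\widetilde{C}]=2$ (i.e.\ $[\overline{C}]\cdot h=0$) is excluded by Lemma~\ref{setmapprpe} since the image would then lie in a fiber of $\pi_-$. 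Your plan to ``move within the available linear system'' to achieve $d\cdot[\widetilde{C}]=1$ is not available: the relevant section is essentially rigid when $m>0$, and you have not shown $m=0$, so a pathological splitting could indeed kill the argument exactly as you fear.

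Two further omissions: you never rule out $C\subset S^{+}$, which is needed for the lower bound $d\cdot[\widetilde{C}]\geq 1$ (the paper does this using the fact that the normalization $\PP^1\times\PP^1\to S^{+}$ is given by a sub-linear system of $|\cO(1,11)|$, so $S^{+}$ contains no irreducible rational quartics), and you assert rather than establish that $\deg\cE|_C=-4$ (the paper cites \cite[Corollary 3.9]{CFK23} for $C\cong\PP^1$ and the restriction being a rank-two bundle of degree $-4$). With those two facts in hand the numerical argument above closes the proof without any case analysis of splitting types or transversality claims.
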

\begin{proof}
Let $C$ be an irreducible quartic rational curve in $X$. First of all, we note that $C$ is not contained in $S^{+}$ since the normalization $\PP^1\times \PP^1\lr S^{+}$ is given by a sub-linear system $|\cO(1,11)|$ (\cite[Lemma 6.1]{MU83}). On the other hand, by \cite[Corollary 3.9]{CFK23}, $C\cong \PP^1$ and $\cE|_{C}$ is a vector bundle of rank $2$ and degree $-4$ on the curve $C$. We may let $\cE|_{C}\cong \Oo_{C}(-2-m)\oplus\Oo_{C}(-2+m)$ for some $m\geq 0$. Let $\widetilde{C}$ be the section of the canonical map $\PP(\cE|_C)\to C$ corresponding to the surjection $\cE|_{C}\dual\twoheadrightarrow\Oo_{C}(2-m)$. Let us denote by $e$ the restriction of the hyperplane class of the projective bundle $\PP(\cE)$. Then $e.[\widetilde{C}]=2-m$ by its construction.
Let $\overline{C}$ be the strict transform of $\widetilde{C}$ along the projection map $\pi_{-}$ followed by the flop map $q$. Then the ampleness of $h$ on $\PP^3$ implies that
\[
[\overline{C}].h
=[\widetilde{C}].(3e-H-d)
=3\times (2-m)-4-d.[\widetilde{C}]=2-3m-d.[\widetilde{C}]\geq 0.
\]
But since $C\not\subset S^{+}$, we have $d.[\widetilde{C}]\geq 1$. Therefore $m=0$ and $d.[\widetilde{C}]=1$ or $2$. In the latter case, $[\overline{C}].h=0$ and thus the image of $\widetilde{C}$ along the flop map $q$ must be contained in a fiber of the projection map $\pi_{-}$, which contradicts to Lemma \ref{setmapprpe}. After all, $[\overline{C}].h=1$ and thus we proved the claim.
\end{proof}
From the result of Proposition \ref{irrcomli}, to find a $\CC^*$-fixed irreducible quartic rational curve in $X$, it is enough to check whether various transformed curve of six coordinate lines in $\PP^3$ becomes an irreducible quartic curve $X$. By a numerical class calculation similar to Proposition \ref{irrcomli}, it is enough to consider four fixed lines: $l_{1,-1}$, $l_{3,-1}$, $l_{-1,-3}$, and $l_{3,-3}$ (cf. Example \ref{kjump}). Note that the four lines are $0$, $1$, $2$, $0$-jumping lines respectively.
\begin{proposition}\label{fam1-1}
Let $l:=l_{1,-1}$ be the line supported on the $1$ (and $-1$) coordinates of $\PP^3$. Then the canonical lifting does not provide any irreducible rational quartic curve in $X$.
\end{proposition}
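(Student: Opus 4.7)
The plan is to derive a numerical obstruction by combining the flop relations \eqref{nclass} with the disjointness of $l = l_{1,-1}$ from the rational normal cubic $C_{\text{red}} \subset \PP^3$. From the proof of Proposition \ref{irrcomli}, the canonical lifting $\widetilde{C} \subset \PP(\cE)$ of an irreducible rational quartic $C \subset X$ satisfies $e \cdot \widetilde{C} = 2$ and $H \cdot \widetilde{C} = 4$, which together with \eqref{nclass} force $h \cdot \widetilde{C} = 1$ and $d \cdot \widetilde{C} = 1$ on the master space $\cM$. In particular such a canonical lift must meet the flopping divisor exactly once.

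First I would verify $l_{1,-1} \cap C_{\text{red}} = \emptyset$. Parametrizing $C_{\text{red}}$ by $[s:t] \mapsto (s^3, s^2 t, s t^2, t^3)$ in the coordinates $(u_3, u_1, u_{-1}, u_{-3})$, the defining equations $u_3 = u_{-3} = 0$ of $l_{1,-1}$ impose $s^3 = t^3 = 0$, forcing $s = t = 0$. Thus $l_{1,-1}$ avoids $C_{\text{red}}$, consistent with $l_{1,-1}$ being a $0$-jumping line (Example \ref{kjump}), so that $\pi_-^{-1}(l_{1,-1}) \cong l_{1,-1} \times \PP^1$.

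Next I would promote this disjointness to a numerical contradiction. By Proposition \ref{flopd}, the flopping locus $\PP(\cS^-) \subset \PP(\cR)$ projects onto $C_{\text{red}}$ under $\pi_-$; hence $\pi_-^{-1}(l_{1,-1})$ lies entirely in the locus where $\hat\pi_- : \cM \to \PP(\cR)$ is an isomorphism, and every curve $\gamma \subset \cM$ whose $\hat\pi_-$-image lies in $\pi_-^{-1}(l_{1,-1})$ must satisfy $d \cdot \gamma = 0$. Taking $\gamma$ to be the strict transform of the canonical section $\overline{C} \subset \pi_-^{-1}(l_{1,-1})$ (with $r \cdot \gamma = 0$ and $h \cdot \gamma = 1$), the relation $H = 3r + 5h - d$ in \eqref{nclass} yields $H \cdot \gamma = 3(0) + 5(1) - 0 = 5$. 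Consequently the image in $X$ has total $H$-degree $5$ and is either an irreducible rational quintic (when the induced map to $X$ is birational) or a line covered five times (when the map has degree $5$), never an irreducible quartic.

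The main subtlety is justifying the identification of ``the canonical lifting'' of $l_{1,-1}$ with the section of class $(r \cdot \gamma, h \cdot \gamma) = (0, 1)$, matching the numerical class $(e, H, h, d, r) = (2, 4, 1, 1, 0)$ that Proposition \ref{irrcomli} singles out for canonical lifts of irreducible quartics. Once this identification is granted, the computed value $d \cdot \gamma = 0 \neq 1$ (equivalently $H \cdot \gamma = 5 \neq 4$) immediately yields the claim.
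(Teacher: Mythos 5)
Your numerical setup is a reasonable framework, but the argument breaks at the step where you claim $d\cdot\gamma=0$. By the Notation following Proposition \ref{flopd}, the flopping locus $\PP(\cS^-)$ is a $\PP^1$-bundle over the parameter curve $C$, but its image under $\pi_-$ in $\PP^3$ is the \emph{surface} $S^-$ --- for the MU-variety, the tangent developable of $C_{\text{red}}$, a quartic hypersurface --- not the curve $C_{\text{red}}$ itself. Every line in $\PP^3$ meets this quartic surface; restricting its equation $4u_{-3}u_1^3-6u_3u_1u_{-1}u_{-3}+4u_3u_{-1}^3-3u_1^2u_{-1}^2+u_3^2u_{-3}^2=0$ to $l_{1,-1}=\{u_3=u_{-3}=0\}$ gives $-3u_1^2u_{-1}^2=0$, so $l_{1,-1}\cdot S^-=2p_1+2p_{-1}$. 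Hence $\pi_-^{-1}(l_{1,-1})$ \emph{does} meet $\PP(\cS^-)$, in finitely many points lying in the fibers over $p_1$ and $p_{-1}$ (the ``base points'' of Lemma \ref{setmapprpe}), and $\hat\pi_-$ is not an isomorphism over all of $\pi_-^{-1}(l_{1,-1})$. Checking $l_{1,-1}\cap C_{\text{red}}=\emptyset$ only shows the line is not tangent to $C_{\text{red}}$ (equivalently, not contained in $S^-$); it does not force $d\cdot\gamma$ to vanish.

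Consequently your computation $H\cdot\gamma=5-d\cdot\gamma=5$ is valid only for a minimal section avoiding those finitely many base points; such sections give quintics and are harmless. The dangerous candidates --- and the actual content of the proposition --- are the minimal sections that \emph{do} pass through the base points, for which $d\cdot\gamma>0$ and a quartic ($d\cdot\gamma=1$) is numerically possible a priori. The paper's proof addresses exactly these: it writes down the family $P_3(a,t)$ of cubics over $l_{1,-1}$ explicitly, isolates the lifting $a=0$ through the base points, and checks by direct substitution that its image in $X$ is a line (so in fact $H\cdot\gamma=1$, i.e.\ $d\cdot\gamma=4$, the section meeting the flopping locus with multiplicity two over each of $p_1$ and $p_{-1}$). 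Some version of this local multiplicity computation cannot be avoided, so the purely numerical argument as written does not close the case.
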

\begin{proof}
From the construction of the flop diagram in Proposition \ref{flopd}, the image $\PP(\cR|_l)\cong \PP^1\times \PP^1$ by the composition map $\pi_{-}\circ q$ in $X$ is exactly the image of $\cC|_l$ except the fibers of two points $p_1$ and $p_{-1}\in \PP^3$. So, among various lifting of the line $l$, the lifting passing through the base point may provide an irreducible quartic curve in $X$. Hence we explicitly construct a family of rational cubic curves in $X$ parameterized by $l$ whose image is exactly $\cC|_{l}$. From the local equation $\cC$ and $X$, we can construct a family of twisted cubics in $X$ such that the central fiber is a degree two map with one base point as follows.
\begin{equation}\label{eq-l1-1}
  P_3(a,t)=\begin{pmatrix}a & 0 & 0 & -\frac{3}{5} t^2& \frac{9}{5}t& -\frac{18}{5}& 0\\
  -\frac{2}{3} at & a & 0& 0& 0 & -\frac{3}{5} t & \frac{2}{5} \\
  0& -2at^2 & at & -\frac{1}{3}a & 0 & 0& \frac{1}{5} t^2 
  \end{pmatrix},
\end{equation}
where $[0:1:t:0]\in l$ and $[1:a]\in \PP^1$. Not that for each $t=t_0\neq 0$, the curve $P_3(a, t_0)$ is a twisted cubic parameterized by $a$. For $t=0$, the curve $P_3(a, 0)$ parameterized by $a$ is a conic with a base point at $a=0$. Hence the possible quartic curve is given by the canonically lifted section $a=0$. But by plugging $a=0$ into \eqref{eq-l1-1}, one can easily see that the image of the curve $P_3(0,t)$ in $X$ is a line and thus it does not give rise to an irreducible quartic curve.

On the other hand, let $[0:s:1:0]\in l$ and $[b:1]\in \PP^1$ be the other chart of the line $l$. By plugging $t=\frac{1}{s}$ and $a=\frac{1}{b}$ in \eqref{eq-l1-1} and doing the elementary operation, we have a family of curves as follows.
\[\begin{pmatrix}
1 & -\frac{3}{2} s & 0 & 0 & 0 & \frac{9}{10} b & -\frac{3}{5} b s\\
0 & 1 & -\frac{1}{2} s & \frac{1}{6} s^2 & 0 & 0 & -\frac{1}{10} b\\
\frac{4}{3} s^2 & 0 & 0 & -\frac{4}{5} b & \frac{12}{5} b s & -\frac{24}{5} b s^2 & 0
\end{pmatrix}.\]
By the same argument as did in the first paragraph, we finish the proof of claim.
\end{proof}
\begin{proposition}
Let $l:=l_{3,-1}$ be the fixed line supported on the $3$ (and $-1$) coordinates of $\PP^3$. Then the canonical lifting provides a fixed irreducible rational normal sextic curve in $X$.
\end{proposition}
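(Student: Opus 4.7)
The plan is to mimic the strategy of Proposition \ref{fam1-1}, but exploit the 1-jumping splitting $\cR|_l \cong \cO_l(-1)\oplus\cO_l(1)$ in place of the generic $\cO_l\oplus\cO_l$ splitting. The goal is to write the canonical lifting explicitly as a morphism $\phi:\PP^1\to X$ and then both read off its degree from the numerical relations \eqref{nclass} and confirm that $\phi$ is given by a complete linear system.

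First, I would construct a local family $P_3(a,t)$ of $3\times 7$ matrices parameterizing twisted cubics in $X$ above $l=l_{3,-1}$, directly analogous to \eqref{eq-l1-1}, where $t$ is an affine coordinate on $l$ and $a$ is a fiber coordinate on $\PP(\cR|_l)\cong \mathbb{F}_2$. The rows of $P_3(a,t)$ must span a three-dimensional subspace of $U_6$ annihilated by the $\SL_2$-invariant net of alternating forms $\eta$ of Proposition \ref{altsl2}, so writing down $P_3(a,t)$ amounts to solving the nine quadratic isotropy relations $P_3\,\eta\,P_3^T=0$ with an ansatz whose column degeneration at $t=0$ encodes the $\cO_l(-1)\oplus\cO_l(1)$ splitting. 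This step is the main technical one and is most efficiently carried out with \texttt{Macaulay2}, just as for the matrix \eqref{eq-l1-1}.

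Second, I would identify the canonical lifting. Because $l$ is 1-jumping, the Hirzebruch surface $\PP(\cR|_l)=\mathbb{F}_2$ has a distinguished section, namely the one corresponding to the unique (up to scalar) surjection $\cR|_l\twoheadrightarrow \cO_l(1)$ onto the positive Harder-Narasimhan quotient; in the coordinates of Step 1 this section is cut out by setting $a=0$ after a suitable normalization. Substituting into $P_3(0,t)$ and clearing a common factor of $t$ produces an explicit morphism $\phi:\PP^1\to X\subset \PP(U_6)$ whose seven coordinates are polynomials in $t$. To compute its degree, apply the numerical relation \eqref{nclass} to the strict transform $\widetilde{C}$ of the canonical section in the master space $\cM$: we have $h.[\widetilde{C}]=1$, $r.[\widetilde{C}]=1$, and $d.[\widetilde{C}]=2$ (meeting $\PP(\cS^{-})$ once over each of the two distinct points of $l\cap S^{-}$, whose intersection profile is of type $(3,1)$), whence
\[
[\overline{C}].H \;=\; 3\,r.[\widetilde{C}]+5\,h.[\widetilde{C}]-d.[\widetilde{C}] \;=\; 3+5-2 \;=\; 6.
\]

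Third, to upgrade from a degree-$6$ curve to a \emph{rational normal} sextic, I would check that the seven coordinate polynomials of $\phi$ are linearly independent polynomials of degree $6$ in $t$. Since $h^0(\cO_{\PP^1}(6))=7$, this forces $\phi$ to be given by the complete linear system $|\cO_{\PP^1}(6)|$; irreducibility and rationality are then immediate, and the image is a rational normal sextic in the $\PP^6$ it spans inside $\PP(U_6)$. Its $\CC^*$-invariance is inherited from the $\CC^*$-invariance of $l_{3,-1}$, of the matrix family $P_3(a,t)$, and of the canonical quotient used to define the section. The principal obstacle is Step 1: producing the matrix $P_3(a,t)$ with simultaneously the correct isotropy and the prescribed jumping profile; once this is in hand, Steps 2 and 3 reduce to direct substitution, a short intersection-number bookkeeping, and a rank computation on seven explicit polynomials.
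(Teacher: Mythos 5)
Your Step 1 is exactly what the paper does: it writes down the explicit family \eqref{eq-l3-1} over $l_{3,-1}$ by solving the isotropy conditions $P_3\eta P_3^T=0$, passes to the second chart via $t=1/s$, $a=s^{-2}b$, and takes the distinguished section $b=\infty$, obtaining an explicit $3\times 7$ matrix in $s$ from which the degree-$6$ fixed irreducible curve is read off directly. Where you diverge is in how you certify the degree, and both of your replacement steps have gaps. Your intersection-theoretic computation $[\overline{C}].H=3\,r.[\widetilde{C}]+5\,h.[\widetilde{C}]-d.[\widetilde{C}]=3+5-2$ hinges on $d.[\widetilde{C}]=2$, which you justify by saying the section meets $\PP(\cS^-)$ once over each of the two points of $l\cap S^{-}$. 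That is precisely the point that needs proof: the intersection profile of $l$ with $S^{-}$ is $(3,1)$, so at the order-$3$ point there is no a priori reason the canonical section should meet the flopping surface transversally; the paper's Figure \ref{fig:Kim3-1} shows that the strict transform of $\PP(\cR|_l)$ acquires reducible fibers, one containing a contracted (degree-$0$) component, over these two points, so the local multiplicities really do require the explicit model. If $d.[\widetilde{C}]$ were $1$ or $3$ you would get degree $7$ or $5$, so this number cannot be waved through.

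Second, Step 3 misidentifies the ambient space. The curve lies in $X\subset\Gr(3,U_6)\subset\PP(\bigwedge^3U_6)$, not in $\PP(U_6)$, so the seven columns of $P_3(0,s)$ are not projective coordinates of the image curve. In the paper's final matrix the entries have degree at most $2$ in $s$; it is the $3\times 3$ Pl\"{u}cker minors (for instance the minor on columns $5,6,7$, equal to $-\tfrac{25}{27}s^6$) that have degree up to $6$, and whose linear span is what certifies that the image is a rational normal sextic. Your criterion ``seven coordinate polynomials of degree $6$, hence a complete linear system'' is applied to the wrong data, even though the intended conclusion is correct. Both repairs are routine once the explicit matrix is in hand --- which is to say that, as in the paper, the explicit matrix does essentially all the work, and the numerical bookkeeping you propose in its place is not self-contained.
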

\begin{proof}
$\PP(\cR|_l)\cong \PP(\cO_l(-1)\oplus \cO_l(1))$ in $X$ is exactly the image of $\cC|_l$ except the fibers of two points $p_3$, $p_{-1}$. As did in proposition \ref{fam1-1}, our chart around the point $[0:0:1:0]$ is given by
\begin{equation}\label{eq-l3-1}
\begin{pmatrix}
a& 0& 0& 0& -\frac{1}{3}t^2a& \frac{10}{27}& \frac{10}{27}t^3a\\
0& a& 0& \frac{1}{9}ta& 0& \frac{5}{9}t^2a& -\frac{10}{243}\\
0& 0& -\frac{27}{5}t^2a& 1& \frac{18}{5}t^3a& 0& t^4a
\end{pmatrix}
\end{equation}
where $U_3\times \CC=\{[t:0:1:0]\times [1:a]\mid t, a\in \CC\}\subset \PP(\cR|_l)|_{U_{3}}$. Over $t=0$, one can easily see that the fiber is a conic with a base point at $a=\infty$. Let $U_{-1}\times \CC=\{[1:0:s:0]\times [1:b]\mid s, b\in \CC\}\subset \PP(\cR|_l)|_{U_{-1}}$ be other chart of the projective bundle $\PP(R|_l)$. By plugging $t=\frac{1}{s}$ and $a=s^{-2}b$ in \eqref{eq-l3-1}, we obtain
\[\begin{pmatrix}s^3b& 0& 0& 0& -\frac{1}{3}sb& \frac{10}{27}s& \frac{10}{27}b\\
0& s^2b& 0& \frac{1}{9}sb& 0&  \frac{5}{9}b& -\frac{10}{243}\\
0& 0& -\frac{27}{5}s^2b& s^2& \frac{18}{5}sb& 0& b
\end{pmatrix}.
\]
%

For $s=0$, the family parameterizes a fixed point for any $b\neq 0$. For the section $b=\infty$, the parameterized curve is given by
\[
\begin{pmatrix}
1 & 0 & 2s & 0 & -\frac{5}{3}s^2 & 0 & 0\\
0 & 1 & 0 & \frac{1}{9}s & 0 & \frac{5}{9}s^2 & 0\\
0 & 0 & -\frac{27}{5} & 0 & \frac{18}{5}s & 0 & s^2
\end{pmatrix}
\]
which is a degree $6$ rational curve fixed by the $\CC^*$-action.
\end{proof}


\begin{remark}
In fact, for the case of the line $l=l_{3,-1}$, during the strict transform of $\PP(\cR|_{l})$ along the blow-up map $\cM\lr \PP(\cR)$ in Proposition \ref{flopd}, the configuration of fibers and lifting curve is provided by the picture
(FIGURE \ref{fig:Kim3-1}.).
\begin{figure}
\begin{tikzpicture}

\draw[thick,color=blue] (0.5,3.15)--(4.5,3.15);
\draw[thick,color=green] (0.5,0.7)--(4.5,0.7);

\draw[color=purple] (1,4) to[out=-90,in=60] (0.5,1.9);
\draw[color=brown] (0.5,2.1) to[out=-60,in=90] (1,0);

\draw[color=purple] (4,4) to[out=-90,in=120] (4.5,1.9);
\draw[color=brown] (4.5,2.1) to[out=240,in=90] (4,0);

\draw[color=red] (0.7,1.6) to[out=-45,in=-135] (1.6,1.6);
\draw[color=orange] (1.4,1.6) to[out=-45,in=-135] (2.3,1.6);

\node at (0.5,2.6) {\footnotesize $2$};
\node at (0.5,1.2) {\footnotesize $0$};

\node at (4.5,2.6) {\footnotesize $2$};
\node at (4.5,1.2) {\footnotesize $1$};

\node at (1.15,1.65) {\footnotesize $0$};
\node at (1.9,1.65) {\footnotesize $1$};

\node at (2.5,3.5) {\footnotesize $2$};
\node at (2.5,0.5) {\footnotesize $6$};

\node[circle,fill=black,inner sep=0pt,minimum size=3pt] at (0.95,3.15) {};
\node[circle,fill=black,inner sep=0pt,minimum size=3pt] at (0.95,0.7) {};
\node[circle,fill=black,inner sep=0pt,minimum size=3pt] at (4.05,3.15) {};
\node[circle,fill=black,inner sep=0pt,minimum size=3pt] at (4.05,0.7) {};
\node[circle,fill=black,inner sep=0pt,minimum size=3pt] at (4.45,2) {};
\node[circle,fill=black,inner sep=0pt,minimum size=3pt] at (0.55,2) {};

\draw[very thick] (0,0) rectangle (5,4);
\node at (2.5,-0.5) {$\widetilde{\mathcal{P}}$};

\end{tikzpicture}
\vspace{-1em}
\caption{A family of twisted cubics over $l_{3,-1}$} \label{fig:Kim3-1}
\end{figure}
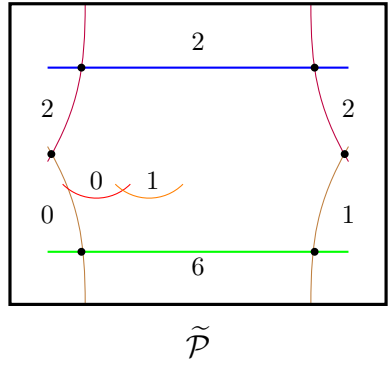
\end{remark}

\begin{proposition}
Let $l:=l_{-1,-3}$ be the fixed line supported on the $3$ (and $1$) coordinates of $\PP^3$.
Then there is no lifting which provides an irreducible rational quartic curve in $X$.
\end{proposition}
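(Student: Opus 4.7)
The plan is to mimic the explicit-calculation strategy used for $l_{1,-1}$ in Proposition \ref{fam1-1} and for $l_{3,-1}$ in the preceding proposition. By Proposition \ref{irrcomli}, any irreducible rational quartic $C\subset X$ arises as the image under the flop of a lifting $\widetilde{C}\subset \PP(\cE)$ whose image on $\PP^3$ along $\pi_{-}$ is a line, so it suffices to check the $\CC^*$-fixed line $l:=l_{-1,-3}$ as the last remaining case. The first step is to write down, in an affine chart of $X$ around $p_{-3}$, a two-parameter family $P_3(a,t)$ of twisted cubics in which $t$ parameterizes a point $[0:0:1:t]\in l$ and $a$ parameterizes the $\PP^1$-fiber of $\PP(\cR|_l)\to l$. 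The matrix is obtained by the same syzygy construction used in \eqref{eq-l1-1} and \eqref{eq-l3-1}, now using the generators of $U_6$ in \eqref{ordw} and the chart equations \eqref{eqV12} for $X$.

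Next I would analyze this family in both affine charts of $l\cong\PP^1$. Because $l_{-1,-3}$ is a $2$-jumping line, $\cR|_l\cong\cO_l(-2)\oplus\cO_l(2)$ and the projective bundle $\PP(\cR|_l)$ is isomorphic to the Hirzebruch surface $\mathbb{F}_4$; consequently the change of variable between the two affine charts must rescale the fiber coordinate by $s^{-4}$, in contrast to the $s^{-2}$ seen in the $1$-jumping case. The numerical constraints $e\cdot[\widetilde{C}]=2$ and $d\cdot[\widetilde{C}]=1$ extracted from the proof of Proposition \ref{irrcomli} restrict the admissible sections of $\PP(\cR|_l)\to l$ to a short explicit list. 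For each such section I would substitute the corresponding value of $a$ as a function of $t$ into $P_3(a,t)$, read off the image curve in $X$ from the resulting $3\times 7$ matrix of Pl\"ucker coordinates, and record its degree and irreducibility.

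I expect the calculation to show that in every case the image is either a line, so that the fiber $\bar{l}_p$ degenerates to a point on $X$ (as happened at the base points in Proposition \ref{fam1-1}), or a conic with a base point, or a curve of degree strictly greater than $4$; in no case does an irreducible quartic appear. The main obstacle is the bookkeeping forced by the $2$-jumping type: the two affine charts see rather different base loci of the family, and the substitution $a\mapsto s^{-4}b$ produces cancellations among the $P_3$ entries that must be tracked carefully when clearing denominators. Once the two local expressions of $P_3(a,t)$ are tabulated and the finitely many candidate sections are enumerated, the non-existence statement reduces to inspecting the leading terms of the matrix entries, exactly as in the preceding two propositions.
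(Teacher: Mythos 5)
Your plan diverges from the paper's actual argument, and the divergence matters: the paper does \emph{not} handle $l_{-1,-3}$ by writing down a two-parameter family $P_3(a,t)$ and inspecting charts. Instead it observes that $l_{-1,-3}$ is tangent to $C_{\mathrm{red}}$, so $l\subset S^-$, $\cR|_l\cong\cO_{\PP^1}(-2)\oplus\cO_{\PP^1}(2)$, and $\PP(\cR|_l)\cong\mathbb{F}_4$ with negative section $\ell_0=\PP(\cR|_l)\cap\PP(\cS^-)$; then for \emph{any} irreducible section $\ell$ of class $\ell_0+bf$ (other than the contracted $\ell_0$) it computes the degree of the image in $X$ directly from the divisor relation $H=3r+5h-d$ of Proposition \ref{flopd}: $[\overline{\ell}].H=3(b-2)+5-[\ell].\ell_0=2b+3$, which is odd and hence never $4$. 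That single parity computation disposes of all liftings at once.

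The gap in your proposal is exactly at the point where you wave at this computation. The statement to be proved quantifies over \emph{all} liftings, so a chart-by-chart inspection of a few explicitly written sections cannot suffice: sections of $\mathbb{F}_4\to\PP^1$ occur in the classes $\ell_0+bf$ for $b=0$ and all $b\geq 4$, each a positive-dimensional family, and "the canonical lifting" alone (as in Propositions \ref{fam1-1} and the $l_{3,-1}$ case) does not exhaust them. You assert that the constraints $e\cdot[\widetilde{C}]=2$, $d\cdot[\widetilde{C}]=1$ from Proposition \ref{irrcomli} cut the candidates down to "a short explicit list," but those constraints live on the $\PP(\cE)$ side and were derived for a hypothetical quartic $C$; translating them into a constraint on section classes of $\PP(\cR|_l)$ is precisely the intersection-theoretic computation above, and once you do it you find the list is \emph{empty} (no integer $b$ solves $2b+3=4$), so the matrix bookkeeping you describe never needs to happen. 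As written, your argument ends with "I expect the calculation to show\dots," which is not a proof; supply the degree formula for a general section of $\mathbb{F}_4$ and the proposition follows immediately.
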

\begin{proof}
Since $l=l_{-3,-1}$ is a tangent line to $C_{\text{red}}$, we have $l\subseteq\Sigma^-$ and $\Rr|_{l}\cong\Oo_{\PP^1}(-2)\oplus\Oo_{\PP^1}(2)$.
In this case, $\mathcal{P}\cong\FF_4$ and we denote by $\ell_0$ the canonical section and $f$ the fiber class of $\mathcal{P}$.
Note that $\mathcal{P}\cap\PP(\cS^-)=\ell_0$.

Let $\ell$ be an irreducible section of $\mathcal{P}$ of the linear class $\ell_0+bf$ and $\overline{\ell}$ be its image on $X$ along the flop map in Proposition \ref{flopd}.
Since ${\hat\pi_{-}}^{-1}(\ell_0)$ is contracted via the flop diagram, we may assume that $\ell$ is distinct from $\ell_0$.
Then the degree of $\overline{\ell}$ is given by
\[
[\overline{\ell}].H
=[\ell].(3r+5h-d)
=3\times (b-2)+5-[\ell].\ell_0
=2b+3,
\]
but this number is never equal to $4$.
\end{proof}

\begin{proposition}\label{fixdirr4}
Let $l_{3,-3}$ be the fixed line supported on the $3$ (and $1$) coordinates of $\PP^3$. There uniquely exists a $\CC^*$-fixed quartic rational curve.
\end{proposition}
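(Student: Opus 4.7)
The plan is to follow the same strategy as the preceding propositions: first determine the splitting type of $\Rr|_{l}$ on $\mathcal{P}:=\PP(\Rr|_{l})$, then use the numerical class calculation from Proposition \ref{flopd}(2) to isolate which linear classes of sections $\ell \subset\mathcal{P}$ can map to a quartic curve, and finally perform the explicit parametrization to exhibit the unique such curve and verify irreducibility.

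\textbf{Numerical reduction.} By Example \ref{kjump}, $l_{3,-3}$ is a $0$-jumping line, so $\Rr|_{l}\cong\Oo_{l}\oplus\Oo_{l}$ and $\mathcal{P}\cong\PP^{1}\times\PP^{1}$ with a section class $\ell_{0}$ (satisfying $\ell_{0}^{2}=0$) and fiber class $f$. Any section of $\mathcal{P}\to l$ has class $\ell_{0}+bf$ for some $b\geq 0$, and the restriction formulas $r.\ell=b$, $h.\ell=1$ apply. Since $l$ is not tangent to $C_{3}$, $l\not\subset S^{-}$; the intersection $l\cap S^{-}$ is determined by the equation of $S^{-}$ restricted to $u_{1}=u_{-1}=0$, namely $u_{3}^{2}u_{-3}^{2}=0$, so $\mathcal{P}\cap\PP(\Ss^{-})$ is supported on finitely many points over $p_{3}$ and $p_{-3}$. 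Hence $d.\ell$ is the sum of the local intersection multiplicities of $\ell$ with these points, and Proposition \ref{flopd}(2) gives
\[
[\overline{\ell}].H=3r.\ell+5h.\ell-d.\ell=3b+5-d.\ell.
\]
Setting $3b+5-d.\ell=4$ forces $d.\ell=3b+1$, and since $d.\ell$ is bounded by $(\ell_{0}+bf).(\mathcal{P}\cap\PP(\Ss^{-}))$ which is at most $2b+2$ (two centers, each contributing at most a multiplicity one condition per fiber crossing), only $b=0$ with $d.\ell=1$ is admissible. So the candidate liftings are sections $\ell_{0}\subset\mathcal{P}$ passing through exactly one of the four fat points of $\mathcal{P}\cap\PP(\Ss^{-})$ with simple contact.

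\textbf{Explicit construction.} As in Propositions \ref{fam1-1} and for the $l_{3,-1}$ case, parametrize $l_{3,-3}$ by $[1:0:0:t]$, introduce a fiber coordinate $a\in\PP^{1}$, and write down a matrix $P_{3}(a,t)$ of size $3\times 7$ with entries depending on $(a,t)$ that realizes the flat family of twisted cubics in $X$ over $\mathcal{P}$, with defining equations \eqref{eqV12} and \eqref{equnivV12} of the chart $V_{12}$ in hand. The image in $\PP^{3}$ of the generic fiber is $\cC|_{l}$ away from the fibers over $p_{3}$ and $p_{-3}$. Examine the two $t=0$ and $t=\infty$ degeneracies: each becomes a rational curve in $X$ with a base point along $\ell_{0}$, revealing exactly one section $a=a(t)$ whose image in $X$ is an honest irreducible rational curve of degree $4$. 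By symmetry under the involution $t\leftrightarrow t^{-1}$, $a\leftrightarrow a^{-1}$ coming from the Weyl element of $\SL_{2}$, only one such section is consistent with the $\CC^{*}$-invariance, giving uniqueness. Irreducibility and reducedness of the image are confirmed by inspecting the $7$ Pl\"ucker coordinates of $P_{3}(a(t),t)$ and checking they generate the ideal of a rational normal image curve of degree $4$.

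\textbf{Main obstacle.} The delicate point is the computation of $d.\ell$ at the singular centers of the blow-up over $p_{3},p_{-3}\in C_{\mathrm{red}}$: because these lie on the singular locus of $S^{-}$ and the flopping locus $\PP(\Ss^{-})$ is a $\PP^{1}$-bundle over the plane quartic $C$, one needs to verify that the scheme-theoretic intersection $\mathcal{P}\cap\PP(\Ss^{-})$ really consists of the expected configuration of reduced points so that a transverse section exists; this forces the unique choice of $a(t)$. Once this is settled, the remaining check — that the resulting curve is an irreducible, reduced, $\CC^{*}$-fixed rational quartic — is a direct computation with the explicit parametrization, entirely parallel to the cases treated in Proposition \ref{fam1-1} and the sextic case for $l_{3,-1}$.
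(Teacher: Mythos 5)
Your overall strategy --- lift over $l_{3,-3}$ to $\PP(\cR)$, use the class identity $H=3r+5h-d$ from Proposition \ref{flopd}(2) to constrain the admissible sections, and then exhibit the curve by an explicit chart computation --- is the same as the paper's (whose proof literally says the method is similar to the previous cases and then records the explicit family and the fixed quartic \eqref{fixedquart}). But there are two genuine gaps. First, your numerical reduction does not close: from $3b+5-d.\ell=4$ you get $d.\ell=3b+1$, and your own bound $d.\ell\le 2b+2$ only excludes $b\ge 2$; the case $b=1$, $d.\ell=4$ satisfies $4\le 2\cdot 1+2$ and survives, so the reduction to sections of class $\ell_0$ is not established. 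Moreover the bound $2b+2$ is itself asserted rather than proved: it requires the scheme structure of $\mathcal{P}\cap\PP(\cS^-)$ over $p_3$ and $p_{-3}$ and the rule for computing $d.[\widetilde{\ell}]$ on the strict transform, which is exactly the ``main obstacle'' you acknowledge but defer.

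Second, and more seriously, the conclusion of the proposition --- existence and uniqueness of a $\CC^*$-fixed irreducible quartic over this line --- is precisely the content of the explicit computation you describe but do not carry out. The paper's proof consists of producing the explicit $3\times 7$ family of cubics over $[t:0:0:1]\in l_{3,-3}$ (subject to a determinantal relation among the parameters) and reading off the fixed quartic \eqref{fixedquart}; your ``revealing exactly one section $a=a(t)$ whose image is an irreducible quartic'' is an assertion, not a derivation. Likewise, uniqueness via the Weyl involution $t\leftrightarrow t^{-1}$, $a\leftrightarrow a^{-1}$ is not a proof: one must enumerate the $\CC^*$-fixed liftings (the fixed points of the torus action on the space of sections) and check the degree of each image, which only the explicit chart computation provides. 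As written, the proposal is a sound plan whose decisive steps are missing.
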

\begin{proof}
Since the method is similar to previous cases, we only address the conclusion. That is, the family of twisted cubic curves in $X$ over $[t:0:0:1]\in l_{3,-3}$ is
\[\begin{pmatrix}1&0& 0& a_1& -\frac{9}{5}ta_9& 9ta_5& -t^2-ta_1\\0& 1& 0& a_5& -2a_1& 3ta_9& -ta_5\\0& 0& 1& a_9& -6a_5& 10a_1& -ta_9
\end{pmatrix}
\]
where 
\[\det\begin{pmatrix}a_5&5a_1+t& 3ta_9\\-2a_9&-30a_5&-20a_1+5t
\end{pmatrix}=0.\]
The fixed irreducible quartic curve in $X$ is 
\begin{equation}\label{fixedquart}
\begin{pmatrix}-4& 0& 0& -t& 0& 0& 5t^2\\0& -4& 0& 0& 2t& 0& 0\\0& 0& -4& 0& 0& -10t& 0
\end{pmatrix}.\qedhere
\end{equation}

\end{proof}

\begin{remark}
From the calculation in this subsection, one can read the parametric presentation of the fixed lines and conic.
In special, for $[s:t]\in \PP^1$, the fixed conic is 
\begin{equation}\label{fixedcon}
\begin{pmatrix}s&0&0&0&0&-9t&0\\0&s&0&0&0&0&t\\0&0&0&1&0&0&0\end{pmatrix}.
\end{equation}
 \end{remark}
After all, we have a configuration of $\CC^*$-fixed reduced rational curve of degree $\leq 4$ as in the FIGURE \ref{fig:M1}.
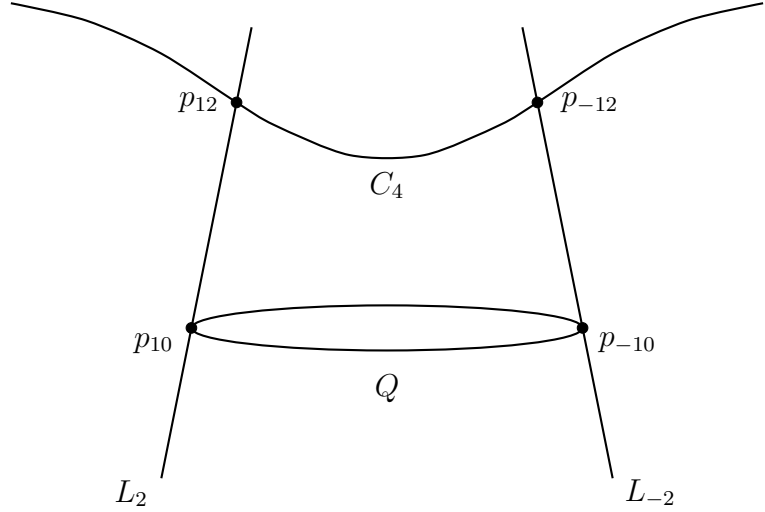
\begin{figure}
  \centering
    \begin{tikzpicture}
      \coordinate (P12) at (-2,2);   
      \coordinate (P-12) at (2,2);   
      \coordinate (P10) at (-2.6,-1);  
      \coordinate (P-10) at (2.6,-1);  
  
      \draw[thick] (-3,-3) -- (P12) -- (-1.8,3); 
      \draw[thick] (3,-3) -- (P-12) -- (1.8,3) ; 
      \node at (-3.4,-3.2) {$L_2$};
      \node at (3.5,-3.2) {$L_{-2}$};
   
      \draw[thick, smooth] plot coordinates {(-5,3.32) (-4,3.09) (-3,2.65) (-2, 2) (-1.5, 1.7) (-0.5, 1.3) (0.5, 1.3) (1.5, 1.7) (2, 2) (3,2.65) (4,3.09) (5,3.32)};
      \node at (0, 0.9) {$C_4$}; 
  
      \draw[thick] (-2.6,-1) arc[start angle=180, end angle=360, x radius=2.6cm, y radius=0.3cm];  
      \draw[thick] (2.6,-1) arc[start angle=0, end angle=180, x radius=2.6cm, y radius=0.3cm];    
      \node at (0, -1.8) {$Q$};  
  
      \filldraw (P12) circle (2pt); 
      \filldraw (P-12) circle (2pt);
      \filldraw (P10) circle (2pt);
      \filldraw (P-10) circle (2pt);
      \node at (-2.5,2) {$p_{12}$};
      \node at (2.7,2) {$p_{-12}$};
      \node at (-3.1,-1.2) {$p_{10}$};
      \node at (3.2,-1.2) {$p_{-10}$};
  \end{tikzpicture}
  \caption{$\CC^*$-fixed reducible rational curves} \label{fig:M1}
  \end{figure}

\subsection{Reducible $\CC^*$-fixed curves and Poincar\'e polynomial of $\bH_4(X)$}\label{trofixpoin}
In this subsection, we compute the weights of the reduced $\CC^*$-fixed rational quartic curve in $X$.
\begin{proposition}\label{irrq}
Let $C_4$ be the $\CC^*$-fixed, irreducible quartic curve in Proposition \ref{fixdirr4}. Then the weights of the tangent space of $\bH_3(X)$ at $[C_4]$ are $4$, $2$, $-2$, and $-4$.
\end{proposition}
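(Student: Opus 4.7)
The plan mirrors the chart-by-chart method carried out in Propositions \ref{deftrip} and \ref{smlq}. From \eqref{fixedquart}, one reads off that $C_4$ meets precisely two $\CC^*$-fixed points of $X$: the point $p_{12}$ (the value at $t=0$) and the point $p_{-12}$ (the leading behaviour of the last column as $t\to\infty$, after rescaling by $t^{-2}$). Hence $C_4$ sits inside the union $V_{12}\cup V_{-12}$ of the two affine charts around these fixed points. In each chart, I would rewrite \eqref{fixedquart} in the row-reduced form adapted to that chart by elementary row operations, and read off the ideal $\cI_{C_4}$ inside the coordinate ring cut out by the nine $\fsl_2$-invariant equations analogous to \eqref{eqV12}.

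Next, with the help of Macaulay2 and following the pattern of \eqref{eq:gensall1} and \eqref{eq:genshom31}, I would compute the local Hom-module $\Hom_{V_{\pm 12}}(\cI_{C_4},\cO_{C_4})$ in each chart. The torus $\CC^*$ acts diagonally on each chart with weights prescribed by $\mathrm{wt}(w_k)=k$ on the basis \eqref{ordw}, so each generator of the local Hom is homogeneous with a definite weight equal to the difference between the weight of the target monomial in $\cO_{C_4}$ and that of the chosen generator of $\cI_{C_4}$. The global tangent space $T_{[C_4]}\bH_4(X)=\Hom_X(\cI_{C_4},\cO_{C_4})$ is then the subspace of pairs of local generators that agree on the overlap $V_{12}\cap V_{-12}$, which I would isolate by matching $\CC^*$-weights across the two charts and using the explicit coordinate change on the overlap (obtained by the same elementary row operations that reshape one row-reduced form into the other).

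The main obstacle is the gluing step: as already visible in Proposition \ref{deftrip} when comparing \eqref{eq:gensall1} and \eqref{eq:genshom31}, many local generators on $V_{12}$ fail to extend to $V_{-12}$ because their natural continuation would introduce a pole along the boundary, and these must be systematically discarded so that only four globally defined generators survive. A useful consistency check is available: since $C_4$ is a smooth rational quartic in the Fano threefold $X$ of index one, one has $\deg N_{C_4/X}=-K_X\cdot C_4-2=2$ and hence $h^0(N_{C_4/X})=4$, $h^1(N_{C_4/X})=0$, confirming both the count of generators and the smoothness of $\bH_4(X)$ at $[C_4]$. Because $C_4$ is $\CC^*$-fixed and the weights of $H^0(N_{C_4/X})$ must form a symmetric multiset, only the values $\pm 4,\pm 2$ (rather than, say, $\pm 3,\pm 1$) should emerge from the Macaulay2 computation, in agreement with the stated conclusion.
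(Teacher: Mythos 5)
Your plan is workable, but it takes a genuinely different route from the paper's own proof. You propose to redo for $C_4$ the chart-by-chart computation of $\Hom(\cI_{C_4},\cO_{C_4})$ carried out in Propositions \ref{deftrip} and \ref{smlq}: your identification of the two relevant fixed points is correct ($p_{12}$ at $t=0$ and $p_{-12}$ at $t=\infty$; in fact the entire affine part of $C_4$ already lies in $V_{12}$, since the leading $3\times 3$ minor of \eqref{fixedquart} is the constant $-64$, so only the single point at infinity needs the second chart), after which you would compute the local Hom-modules with Macaulay2 and glue. The paper instead argues globally in one stroke: it invokes the balanced splitting $N_{C_4/X}\cong\cO_{C_4}(1)\oplus\cO_{C_4}(1)\cong\cO_{C_4}(1)\otimes V_2$, reads off from the parametrization \eqref{fixedquart} that the torus acts on $V_2$ with weights $\pm 1$ and on $\rH^0(\cO_{C_4}(1))\cong\CC^2$ with weights $\pm 3$, and concludes that the weights on $\rH^0(N_{C_4/X})=\rH^0(\cO_{C_4}(1))\otimes V_2$ are $\pm 3\pm 1=\{4,2,-2,-4\}$. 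The paper's argument buys brevity and entirely avoids the gluing step you rightly identify as the main obstacle, at the price of taking the splitting type of $N_{C_4/X}$ as known; your route is uniform with the rest of Section 4 but remains a sketch until the Macaulay2 computation is actually executed. Two cautions about your closing consistency check: $h^1(N_{C_4/X})=0$ does not follow from $\deg N_{C_4/X}=2$ alone (a direct summand of degree $\le -2$ would contribute to $h^1$), so you implicitly need the balanced splitting there as well; and symmetry of the weight multiset by itself does not single out $\{\pm 4,\pm 2\}$ over, say, $\{\pm 6,\pm 2\}$ --- you are right that the actual values must come out of the computation (or out of the paper's tensor decomposition), not out of the symmetry argument.
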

\begin{proof}
It is well-known that  $N_{C_4/X}=\cO_{C_4}(1)\oplus \cO_{C_4}(1)\cong \cO_{C_4}(1)\otimes V_2$ for some vector space $V_2$ of dimension $2$, it is enough to find the weights of $\CC^*$-action on each space of the normal bundle. By using the defining equation of $C$ (cf. \eqref{fixedquart}), one can easily see that the weights of $V_2$ are $\pm1$ and the  $\CC^*$-action on $\rH^0(\cO_{C_4}(1))\cong \CC^2$ is given by $\pm 3$. Thus one can finish the proof.
\end{proof}

\begin{proposition}\label{req}
Let $C=L_{2}\cup L_{-2} \cup Q$ be the $\CC^*$-fixed, reduced quartic curve in $X$ such that $L_{\pm2}$ are fixed lines and $Q$ is the fixed smooth conic (cf. FIGURE \ref{fig:M1}). Then the weights of the tangent space of $\bH_4(X)$ at the point $[C]$ are $4$, $2$, $-2$, and $-4$.
\end{proposition}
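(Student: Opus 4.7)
The plan is to compute the tangent space $T_{[C]} \bH_4(X) = \Hom_X(\cI_C, \cO_C)$ by the same explicit chart-by-chart method used for twisted cubics in Propositions \ref{deftrip} and \ref{smlq}. The curve $C = L_2 \cup Q \cup L_{-2}$ is a chain of three smooth rational components joined at two nodes $p_{10} \in L_2 \cap Q$ and $p_{-10} \in L_{-2} \cap Q$, and is CM in the smooth threefold $X$. Hence the tangent space coincides with $H^0(C, \cHom(\cI_C, \cO_C))$ and is controlled locally near the four $\CC^*$-fixed points of $C$.

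First, I would read off the defining ideal of $C$ in each of the four affine charts $V_{\pm 12}, V_{\pm 10}$ constructed at the start of Section 4. In $V_{12}$ (respectively $V_{-12}$), only the line $L_2$ (respectively $L_{-2}$) is visible, so the local computation reduces to the normal-bundle calculation of a single fixed line. In $V_{10}$, both $L_2$ and the conic $Q$ are visible and meet at the node $p_{10}$; the local defining ideal has the same shape as the one studied in Proposition \ref{smlq} (a line attached transversely to a smooth conic), so the local generators of $\cHom(\cI_C, \cO_C)$ can be produced by Macaulay2 along exactly the same template. The situation in $V_{-10}$ is symmetric to $V_{10}$.

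Next, I would select those local generators that extend to global sections across all four charts, using the transition maps between the $P_3$-type affine presentations of $X \subset \Gr(3, U_6)$. The dimension count one expects is
\[
h^0(N_{L_2/X}) + h^0(N_{Q/X}) + h^0(N_{L_{-2}/X}) - 2 = 2 + 2 + 2 - 2 = 4,
\]
where each summand is read from Proposition \ref{linesinx}(2) and Proposition \ref{conicsinx}, and the correction $-2$ comes from requiring the first-order deformation to preserve the two nodes. This matches the smoothness and dimension $4$ of $\bH_4(X)$ recorded in Theorem \ref{mainthm}. Once the four global generators are exhibited, their torus weights are read off from the Pl\"ucker embedding $X \subset \PP(\bigwedge^3 U_6)$ using the weight assignment $w_{i}\mapsto i$ exactly as at the end of the proof of Proposition \ref{deftrip}.

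The main obstacle is the cross-chart bookkeeping needed to isolate precisely the four global sections out of the larger local modules, and to verify that the two node-preservation conditions at $p_{\pm 10}$ cut the dimension correctly. The $\ZZ_2$-symmetry $t \mapsto t^{-1}$ of the torus action on $X$ (which exchanges the pairs $p_{\pm 12}$ and $p_{\pm 10}$) halves the work by forcing weights to occur in opposite pairs; combined with the match to the weights $4,2,-2,-4$ of Proposition \ref{irrq} for the irreducible fixed quartic, this produces the claimed list $\{\pm 2,\pm 4\}$.
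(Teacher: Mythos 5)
Your plan is viable but follows a genuinely different route from the paper's proof of this particular proposition. You propose the explicit chart-by-chart computation of $\Hom_X(\cI_C,\cO_C)$ in $V_{\pm 12},V_{\pm 10}$ with Macaulay2 and cross-chart patching --- i.e.\ the method the paper uses for Propositions \ref{deftrip}, \ref{smlq}, \ref{mult4line}, and \ref{l2q} --- whereas for Proposition \ref{req} the paper argues coordinate-freely: since $C$ has only $A_1$-singularities, $N_{C/X}$ is locally free, and tensoring the structure sequence $\ses{\cO_Q(-p_{10}-p_{-10})}{\cO_C}{\cO_{L_2}\oplus\cO_{L_{-2}}}$ with $N_{C/X}$, together with $N_{Q/X}\cong\cO_Q^{\oplus 2}$ and the elementary-modification sequences of \cite[Corollary 3.2]{HH83}, gives $N_{C/X}|_Q\cong\cO_Q(p_{10})\oplus\cO_Q(p_{-10})$ and $N_{C/X}|_{L_{\pm 2}}\cong\cO(1)\oplus\cO(-1)$, hence $\rH^1(N_{C/X})=0$ and $\rH^0(N_{C/X})=\rH^0(\cO_{L_2}(1))\oplus\rH^0(\cO_{L_{-2}}(1))$, with weights $4,2$ and $-4,-2$ imported from the proof of Proposition \ref{smlq}. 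The paper's route buys a clean identification of exactly where the four sections live; your route, if carried out, would produce the same answer but at the cost of the cross-chart bookkeeping you acknowledge.

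Two soft spots in your write-up. First, the count $h^0(N_{L_2/X})+h^0(N_{Q/X})+h^0(N_{L_{-2}/X})-2=4$ is only a heuristic and, more importantly, misattributes the sections: in the paper's analysis the conic contributes \emph{nothing} (because $N_{C/X}|_Q(-p_{10}-p_{-10})\cong\cO_Q(-p_{-10})\oplus\cO_Q(-p_{10})$ has no sections), while each line contributes \emph{two} sections via $N_{C/X}|_{L_i}\cong\cO_{L_i}(1)\oplus\cO_{L_i}(-1)$, which is a positive elementary modification of $N_{L_i/X}\cong\cO(1)\oplus\cO(-2)$ at the node --- so a priori $h^0$ along a component can jump rather than drop, and the "$-2$ for node preservation" is not a justification. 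Second, invoking the match with the weights of Proposition \ref{irrq} is only a consistency check, not an argument. Neither issue is fatal, since your primary plan (the explicit computation) would be self-justifying once performed, but as written the proof defers the essential content --- $h^0=4$, $h^1=0$, and the weight assignment --- to an unexecuted calculation.
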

\begin{proof}
Since the curve $C$ has the $A_1$-singularity at two singular points $p_{10}$ and $p_{-10}$, the normal sheaf $N_{C/X}$ is a locally free sheaf of rank two. Hence from the structure sequence $\ses{\cO_{Q}(-p_{10}-q_{-10})}{\cO_{C}}{\cO_{L_2}\oplus\cO_{L_{-2}}}$, we have
\begin{equation}\label{llcex}
\ses{N_{C/X}|_{Q}(-p_{10}-q_{-10})}{N_{C/X}}{N_{C/X}|_{L_2\sqcup L_{-2}}\cong N_{C/X}|_{L_2}\oplus N_{C/X}|_{L_{-2}}}.
\end{equation}
Note that the $\CC^*$-action on the direct sum part of the last term in \eqref{llcex} trivially acts since it comes from the restriction map $\cO_{C}\twoheadrightarrow \cO_{L_{i}}, i=\pm2$.
But from the short exact sequence $\ses{N_{Q/X}}{N_{C/X}|_{Q}}{\CC_{p_{10}}\oplus\CC_{p_{-10}}}$, $N_{Q/X}\cong \cO_{Q}\oplus\cO_{Q}$ (Proposition \ref{conicsinx}) and the isomorphism in \eqref{lqn}, we see that $N_{C/X}|_{Q}\cong \cO_Q(p_{10})\oplus \cO_Q(p_{-10})$ (cf. \cite[Corollary 3.2]{HH83}) and $N_{C/X}|_{L_{i}}=N_{L_i\cup Q/X}|_{L_{i}}\cong \cO_{L_i}(1)\oplus \cO_{L_i}(-1)$ for $i=\pm2$. Plugging these data in \eqref{llcex}, we conclude that $\rH^1(N_{C/X})=0$ and
\begin{equation}\label{llqnor}
\rH^0(N_{C/X})=\rH^0(N_{C/X}|_{L_2})\oplus\rH^0(N_{C/X}|_{L_{-2}})=\rH^0(\cO_{L_2}(1))\oplus\rH^0(\cO_{L_{-2}}(1)).
\end{equation}
From the proof of (2) and (3) of Proposition \ref{smlq}, the weights of the first (resp. second) term of the rightmost side in \eqref{llqnor} are $4$, $2$ (resp. $-4$, $-2$) (cf. Proposition \ref{lacx}).
\end{proof}
In general, the Hilbert scheme parameterizes non Cohen-Macaulay connected (briefly CM) curves. But in our case, since $X\subset \PP^{34}$ are generated by quadratic and linear forms and does not contain a plane, this cannot occur as follows.
\begin{proposition}\label{cucm}
Every connected curve $C$ with Hilbert polynomial $4m+1$ in $X$ is a CM-curve.
\end{proposition}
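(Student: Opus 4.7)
The plan is to argue by contradiction: assuming $C \subset X$ is a connected curve of Hilbert polynomial $4m+1$ that is not Cohen-Macaulay, I will extract a CM curve $C'$ in $X$ of degree $4$ and positive arithmetic genus, and then obtain a contradiction from the two highlighted properties of $X$ (cut out in $\PP^{34}$ by linear and quadratic forms, and containing no plane) together with the non-existence of elliptic quartics in $X$ cited earlier.

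First, I would take the maximal $0$-dimensional torsion subsheaf $T \subset \cO_C$ (of length $\ell \geq 1$) and form the Cohen-Macaulayfication $C' \subset X$ defined by $\cO_{C'} = \cO_C/T$. Computing Euler characteristics in the sequence
\[
0 \longrightarrow T \longrightarrow \cO_C \longrightarrow \cO_{C'} \longrightarrow 0
\]
(after twisting by $\cO(m)$) shows that $C'$ has Hilbert polynomial $4m + 1 - \ell$, so $C'$ is a CM curve of degree $4$ and arithmetic genus $p_a(C') = \ell \geq 1$. Since $|C'|=|C|$ topologically and $C$ is connected, $C'$ is connected as well.

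Next, I would bound the linear span $\langle C'\rangle \subset \PP^{34}$. Riemann-Roch on the CM curve gives $\chi(\cO_{C'}(1)) = 5 - \ell$, and a Clifford-type bound, stating that $h^0(\cO_{C'}(1)) \leq \tfrac{1}{2}\deg(\cO_{C'}(1))+1 = 3$ whenever $h^1(\cO_{C'}(1))>0$, combined with $\dim\langle C'\rangle + 1 \leq h^0(\cO_{C'}(1))$ yields $\dim \langle C'\rangle \leq 3$, with the stronger bound $\dim \langle C'\rangle \leq 2$ once $\ell \geq 2$. If $\dim\langle C'\rangle \leq 2$, then $C' \subset \Pi$ for some plane $\Pi \subset \PP^{34}$; since $\Pi \not\subset X$ by hypothesis and $X$ is cut out by linear and quadratic forms, $X \cap \Pi$ is defined in $\Pi$ by conics (and lines), so $\deg(X\cap\Pi) \leq 2$ in $\Pi$, contradicting $\deg C' = 4$.

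The only remaining case is $\ell = 1$ with $\dim\langle C'\rangle = 3$, in which case $C'$ is a connected CM curve of degree $4$ and arithmetic genus $1$ in some $\PP^3$, i.e.\ an elliptic quartic, which is excluded in $X$ by \cite[Corollary 3.9]{CFK23}. The most delicate step I anticipate is the justification of the Clifford-type inequality for the (possibly non-reduced or reducible) CM curve $C'$; if a clean reference in the required generality is not available, an alternative is to enumerate connected CM quartic curves directly, e.g.\ ribbon structures on plane conics and unions involving the lower-degree rational curves already classified in Sections~\ref{lowrat}--\ref{31}, and verify case by case that each such curve either lies in a plane or has arithmetic genus exactly $1$.
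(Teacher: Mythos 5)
Your reduction to the Cohen--Macaulayfication $C'$ with $p_a(C')=\ell\ge 1$, and your elimination of the planar case via the fact that $X$ is cut out by linear and quadratic forms, follow the same route as the paper (which phrases the planar contradiction as ``$X$ would contain the plane $\langle \bar C\rangle$, violating $\dim \bH_1(X)=1$''). The genuine gap is in your last step: you identify ``connected CM curve of degree $4$ and arithmetic genus $1$ spanning a $\PP^3$'' with ``elliptic quartic'' and discard it by [CFK23, Corollary 3.9]. That corollary does not obviously cover all such curves: the degree-$4$, genus-$1$ locus contains, besides complete intersections of two quadrics, degenerate connected CM members (for instance a plane cubic together with an incident line, and various non-reduced structures) which are not elliptic quartics in the usual sense. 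The paper's own proof only extracts from [CFK23] that $\bar C$ cannot be a local complete intersection, and then must invoke the Avritzer--Vainsencher classification [AV92, Theorem 5.2]: a non-lci connected CM quartic of genus $1$ spanning $\PP^3$ has ideal generated by two quadrics sharing a common linear factor together with a cubic, hence contains a plane cubic, and it is this that re-triggers the ``no plane in $X$'' contradiction. Without this extra classification step (or some other case-by-case elimination of the degenerate genus-$1$ quartics), your argument does not close.

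A secondary, smaller issue is the Clifford-type inequality you use to bound $\dim\langle C'\rangle$; you flag it yourself, and it is indeed the delicate point for reducible or non-reduced CM curves. The paper sidesteps it by appealing directly to the Castelnuovo/degree--genus bound for non-planar quartics, which is the same assertion in different clothing, so I would not count this heavily against you. Note, however, that your proposed fallback (enumerating connected CM quartics and checking that each is planar or of genus exactly $1$) still leaves you facing precisely the non-planar genus-$1$ curves that constitute the main gap above.
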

\begin{proof}
Let $\bar{C}$ be the maximal CM-subcurve of the curve $C$. By its definition $\deg(\bar{C})=4$. If the curve $\bar{C}$ is planar, then $X$ contains the plane $\langle \bar{C}\rangle =\PP^2$ which violates the fact that $\dim \bH_1(X)=1$. Hence by the degree-genus formula, the possible arithmetic genus of $\bar{C}$ are $p_a(\bar{C})=0$ or $1$. When $p_a(\bar{C})=1$, then it is well-known that $\langle\bar{C}\rangle=\PP^3$. Also by \cite[Corollary 3.9]{CFK23}, the curve $\bar{C}$ is not a local complete intersection. Then, the possible generators of the defining equation $\bar{C}$ was classified in \cite[Theorem 5.2]{AV92} (cf. \cite[(2) and (3) of Proposition 2.5]{CCM16}). That is, the ideal $\bar{C}$ consists of two quadrics having a common linear factor and a cubic form, then it violates that $X$ does not contain any plane again.
\end{proof}
Note that the structure sheaf of the CM-curve is stable (Proposition \ref{stabr}). Since the MU-variety does not contain a pair of lines, the reduced support of fixed curves are of types: A line, conic, and a pair of line and conic. For the detailed introduction and properties of the CM-filtration of CM-curve in a smooth projective variety of dimension $3$, see \cite{BF87, NS03}.
\begin{proposition}\label{mult4line}
\begin{enumerate}
\item An CM-curve $C$ in $X$ supported on the $\CC^*$-fixed line $L$ is unique.
\item The Hilbert scheme $\bH_4(X)$ at $[C]$ is smooth with weights $6$, $4$, $4$, and $2$.
\end{enumerate}
\end{proposition}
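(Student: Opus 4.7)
The plan is to address claims (1) and (2) separately: first to establish uniqueness of the CM quartic structure $C$ supported on $L$ via the Banica--Forster theory of multiple structures on curves in smooth threefolds, then to compute the tangent and obstruction spaces at $[C]$ by explicit Macaulay2 computation in a local chart.

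For uniqueness (part (1)), by the symmetry interchanging $L_2$ and $L_{-2}$, I would take $L := L_2$, the case $L = L_{-2}$ being parallel. By Proposition \ref{linesinx}(2), $N^*_{L/X} \cong \cO_L(-1) \oplus \cO_L(2)$. Following \cite{BF87, NS03}, any CM multiplicity-$4$ structure $C$ on $L$ admits a filtration $L = L^{(1)} \subset L^{(2)} \subset L^{(3)} \subset L^{(4)} = C$ by CM subcurves whose conormal quotients $F_i := I_{L^{(i)}}/I_{L^{(i+1)}}$ are line bundles on $L \cong \PP^1$, with $F_1$ a quotient of $N^*_{L/X}$. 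The Hilbert polynomial $4m+1$ forces $\chi(\cO_C) = 1$, hence $\sum_{i=1}^3 \deg F_i = -3$. I would enumerate the finitely many degree triples $(d_1, d_2, d_3)$ compatible with the CM extension conditions and with the additional constraints imposed by $C \subset X$ (read off from the local equations \eqref{eqV12} in the Pl\"ucker chart $V_{12}$), and verify that exactly one triple with one compatible extension class survives.

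For smoothness and weights (part (2)), I would work in the chart $V_{12}$ around $p_{12}$. Using \eqref{eqV12} and \eqref{equnivV12}, I would write down the ideal $I_C \subset \cO_X$ of the unique $C$ produced in Step 1. Following the computational scheme of Propositions \ref{deftrip} and \ref{smlq}, I would compute $\Hom_X(I_C, \cO_C)$ and $\Ext^1_X(I_C, \cO_C)$ via Macaulay2 \cite{M2}, identify the generators over $V_{12}$, and retain those that extend compatibly to the neighbouring charts (for example $V_{10}$). The $\CC^*$-weights are then read off from the Pl\"ucker embedding $X \subset \PP(\bigwedge^3 U_6)$. I expect to find four globally extendable generators with $\CC^*$-weights $6, 4, 4, 2$, spanning a $4$-dimensional tangent space, together with vanishing $\Ext^1$; the latter yields smoothness at $[C]$.

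The main obstacle is the uniqueness claim in Step 1. Unlike the $\PP^3$ case of \cite{NS03}, where $N^*_{L/\PP^3} \cong \cO(-1)^{\oplus 2}$ is symmetric and forces $(d_1,d_2,d_3) = (-1,-1,-1)$, our $N^*_{L/X} \cong \cO_L(-1) \oplus \cO_L(2)$ admits more line bundle quotients, so several degree triples with $\sum d_i = -3$ are \emph{a priori} admissible, and the Banica--Forster extension data carries moduli at each step. Reducing these to a single curve requires combining the CM condition with the very restrictive embedding in $X$, and this is the conceptually nontrivial part of the argument. The local computation in Step 2 is routine, but tracking the $\CC^*$-weights through the chart transitions (as for Proposition \ref{deftrip}) requires care.
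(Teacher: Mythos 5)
Your overall strategy coincides with the paper's: a Banica--Forster/Nollet--Schlesinger filtration with degree bookkeeping for uniqueness, followed by an explicit Macaulay2 computation of $\Hom_X(\cI_C,\cO_C)$ in the Pl\"ucker charts for the weights. But there is one concrete gap in part (1). You assert that \emph{any} CM multiplicity-$4$ structure on $L$ admits a filtration with three rank-one conormal quotients; that holds only when $C$ has generic embedding dimension $2$. A multiplicity-$4$ CM structure can instead contain the full first infinitesimal neighbourhood of $L$ (rank sequence $(2,1)$ rather than $(1,1,1)$), i.e.\ $I_L^3\subset I_C\subset I_L^2$ with $I_L^2/I_C$ a line bundle. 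The paper disposes of this case first: the Hilbert polynomial forces $I_L^2/I_C\cong\cO_L(-4)$, while $I_L^2/I_L^3\cong\Sym^2(\cO_L(-1)\oplus\cO_L(2))\cong\cO_L(-2)\oplus\cO_L(1)\oplus\cO_L(4)$ admits no nonzero map, let alone a surjection, onto $\cO_L(-4)$. Without this step your enumeration of degree triples does not exhaust the CM structures on $L$.

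On the remaining case your plan is right in outline but leaves the decisive mechanisms unnamed, and they are not quite where you place them. With the paper's normalization the degrees are $(a,2a+b,3a+c)$ with $a\ge-1$, $0\le b\le c$ and $6a+b+c=-3$, forcing $a=-1$ and $(b,c)\in\{(0,3),(1,2)\}$. The triple $(0,3)$ is excluded not by the local equations \eqref{eqV12} but by a global fact: the intermediate triple structure would have Hilbert polynomial $3m$, hence be planar, and $X$ contains no plane. For $(1,2)$, uniqueness of the extension class comes from $\dim\Ext^1(\cO_D,\cO_L(-1))=1$ together with the classification of non-split extensions, after which the chart computation only serves to exhibit the ideal explicitly. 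Part (2) of your plan matches the paper's computation (four globally extendable generators of weights $6,4,4,2$), so once the gap above is filled the argument goes through.
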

\begin{proof}
(1) If the curve $C$ has an embedding dimension $3$ at each point,
then $I_{L}^3\subset I_C\subset I_{L}^2$ (\cite[Section 4]{BF87}). Also $I_L^2/I_C\cong \cO_L(-4)$ because $I_L/I_L^2\cong \cO_L(-1)\oplus \cO_L(2)$. Combining these ones, we have the structure sequence
\begin{equation}\label{thick4}
\ses{I_C/I_L^3}{I_L^2/I_L^3\cong\Sym^2(\cO_L(-1)\oplus \cO_L(2))}{I_L^2/I_C=\cO_L(-4)}
\end{equation}
which is impossible because of the degree reasoning of the third map in \eqref{thick4}. If the curve $C$ has generic embedding dimension $2$, there exists a CM-filtration of $C$:
\[
L\subset D\subset W\subset C
\]
such that $I_{L|D}\cong \cO_L(a)$, $I_{D|W}\cong \cO_L(2a+b)$ and $I_{W|C}\cong \cO_L(3a+c)$. By the construction of CM-filteration of $C$ and Proposition \ref{linesinx}, we see that $a\geq -1$, $0\leq b\leq c$. Since $p_a(C)=0$, we have $6a+b+c=-3$. Hence the possible types are given by $(a, b, c)=(-1,0,3)$ or $(-1, 1, 2)$  (\cite[Section 2]{NS03}). For the first case, $\chi(\cO_W(m))=3m$, which implies that the subcurve $W$ is planar by the degree-genus formula. Since the defining equations of $X$ are generated by quadrics and linear forms, $\langle W\rangle =\PP^2\subset X$, which is a contradiction. Hence the CM-curve $C$ has a CM-subcurve $W$ (resp. $D$) with the Hilbert polynomial $3m+1$ (resp. $2m+1$).
We claim that the CM-curve $C$ with this filtration is unique if it exists. By a diagram chase and Proposition \ref{stabr}, one can see that $\cO_C$ lies in the exact sequence
\[
\ses{G}{\cO_C}{\cO_{D}}
\]
such that $G$ fits into the non-split extension $\ses{\cO_L(-1)}{G}{\cO_L(-1)}$. But such non-extensions are parameterized by a $\CC^{a-1}$-bundle over $\PP^{a-1}$ for $a=\dim\Ext^1(\cO_{D},\cO_L(-1))$ (\cite[Proposition 4.4]{Mun08}). Since $a=1$, the possible CM-curve $C$ is unique. Now we find the defining ideal of the unique curve $C$. From the structure sequence: $\ses{\cO_L(-1)}{\cO_{W}}{\cO_{D}}$, we have a surjective map: $I_{W}\twoheadrightarrow \cO_L(-1)$. By taking the derived functor $\Hom(I_{W},-)$, we obtain an exact sequence:
\[
0\lr \Hom(I_{W},\cO_L(-1))\lr \Hom(I_{W},\cO_{W})\stackrel{\Psi}{\lr} \Hom(I_{W},\cO_{D})\lr \cdots
\]
where the middle term is the deformation space of $\bH_3(X)$ at $[W]$.
Among the generators in \eqref{genssec3}, the last column only generates the kernel of $\Psi$. So we have a unique surjective map $\phi_3: I_{W}\twoheadrightarrow \cO_L(-1)$, $\phi_3\begin{pmatrix} a_4\\a_8\\ a_{12}\end{pmatrix}=\begin{pmatrix} 0\\0\\a_{11}\end{pmatrix}$ and thus the kernel consists of $\langle a_4, a_8, a_{10}a_{12}\rangle$. Therefore the ideal $I_{C}$ is given by
\[\begin{split}
\langle a_8, a_7+3a_{12}, 5a_6+a_{11}, 6a_5+a_{10}, a_4, a_3, &5a_2-9a_{12}, 10a_1-a_{11}, a_{10}a_{12}, a_{11}^2,\\ a_{10}a_{11}-18a_9a_{12}, &5a_{10}^2-6a_9a_{11}+12a_{12} \rangle.
\end{split}\]
Using the Macaulay2 program (\cite{M2}), one can check that the projectivization of this curve is exactly the multiplicity $4$-line in $X$.

(2) On the open chart $V_{10}$, the ideal of the curve $C$ is generated by $\langle b_4, b_8, b_{10}b_{12}\rangle$. As did in the degree three case, by considering the weight decomposition of the deformation space, one can see that the column of the matrix:
\[
\begin{pmatrix}
0&0&0&\frac{3}{2}b_{12}\\
0&0&b_{12}&-\frac{1}{6}b_{11}\\
b_{11}&b_{12}&-6b_9 b_{12}&b_9b_{11}+6b_{10}
\end{pmatrix}
\]
generates the sheaf $\cH om (I_C, \cO_{C})$ over $V_{10}$ where the weights are given by $4$, $2$, $4$, and $6$.
\end{proof}

\begin{remark}
As did in \cite[Section 3.3]{CK11}, one can construct a flat family of sheaves over the stable maps space. By performing the elementary modification of sheaves, one can obtain a stable limit. In our setting, for the general $4:1$-covering $f:\PP^1\lr L\subset X$, the direct image sheaf of $\cO_{\PP^1}$ by the map $f$ is of the form $f_*{\cO_{\PP^1}}=\cO_L\oplus \cO_L(-1)^{\oplus3}$. Hence the possible type of stable sheaf (CM-curve) is of the form in Proposition \ref{mult4line}.
\end{remark}

\begin{lemma}\label{nestedseq}
Let $Y \stackrel{i}{\hookrightarrow} X$ be a smooth, closed subvariety of the smooth variety $X$.
If $F$ and $G \in \mathrm{Coh}(Y)$, then there is an exact sequence
\begin{equation}\label{thomas2}
\begin{split}
0 \to \Ext^1_{Y}(F, G) \to \Ext^1_{X}(i_*F, i_*G) &\to \Hom_{Y}(F,G\otimes N_{Y/X}) \\
&\to \Ext^2_{Y}(F, G) \to \Ext^2_{X}(i_*F, i_*G).
\end{split}
\end{equation}
\end{lemma}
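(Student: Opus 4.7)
The plan is to deduce the exact sequence from the low-degree (5-term) exact sequence of a hyperext spectral sequence, after using the adjunction between $i_*$ and $i^{*}$ and the Koszul model of the regular embedding $i\colon Y\hookrightarrow X$.

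First I would invoke the derived adjunction
\[
\mathrm{R}\Hom_{X}(i_*F,i_*G)\;\cong\;\mathrm{R}\Hom_{Y}(\mathrm{L}i^{*}i_*F,G),
\]
which identifies $\Ext^{n}_{X}(i_*F,i_*G)=\Ext^{n}_{Y}(\mathrm{L}i^{*}i_*F,G)$ for every $n$. Because $Y$ is smooth in the smooth variety $X$, the inclusion $i$ is a regular embedding, so $i_*\mathcal{O}_{Y}$ admits (locally) the Koszul resolution built from the conormal bundle $N_{Y/X}^{\vee}=\mathcal{I}/\mathcal{I}^{2}$. Tensoring the Koszul resolution with $F$ computes the cohomology sheaves of the derived pullback:
\[
\mathcal{H}^{-q}(\mathrm{L}i^{*}i_*F)\;\cong\;F\otimes_{\mathcal{O}_{Y}}\wedge^{q} N_{Y/X}^{\vee}\qquad (q\geq 0).
\]
In particular $\mathcal{H}^{0}=F$ and $\mathcal{H}^{-1}=F\otimes N_{Y/X}^{\vee}$.

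Next I would feed these into the hyperext spectral sequence for the bounded-above complex $\mathrm{L}i^{*}i_*F$ against the sheaf $G$:
\[
E_{2}^{p,q}=\Ext^{p}_{Y}\bigl(\mathcal{H}^{-q}(\mathrm{L}i^{*}i_*F),\,G\bigr)\;\Longrightarrow\;\Ext^{p+q}_{Y}(\mathrm{L}i^{*}i_*F,G)=\Ext^{p+q}_{X}(i_*F,i_*G),
\]
with differentials $d_{2}\colon E_{2}^{p,q}\to E_{2}^{p+2,q-1}$. Since $N_{Y/X}$ is locally free, the projection formula gives
\[
E_{2}^{p,0}=\Ext^{p}_{Y}(F,G),\qquad E_{2}^{p,1}=\Ext^{p}_{Y}(F\otimes N_{Y/X}^{\vee},G)=\Ext^{p}_{Y}(F,\,G\otimes N_{Y/X}).
\]

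Finally, the standard 5-term exact sequence attached to a first-quadrant (or here, suitably bounded) spectral sequence in total degrees $\leq 2$ reads
\[
0\to E_{2}^{1,0}\to H^{1}\to E_{2}^{0,1}\xrightarrow{d_{2}} E_{2}^{2,0}\to H^{2},
\]
which upon substituting the identifications above becomes exactly the claimed sequence \eqref{thomas2}. The one nontrivial input is the Koszul computation of $\mathcal{H}^{-q}(\mathrm{L}i^{*}i_*F)$; this is where smoothness of both $Y$ and $X$ is used, ensuring that $i$ is a regular embedding so that the conormal sequence of $i_*\mathcal{O}_{Y}$ furnishes the required locally free resolution. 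Everything else is formal manipulation of the spectral sequence.
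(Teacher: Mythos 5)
Your proposal is correct and follows essentially the same route as the paper, which simply cites this as the base-change (hyperext) spectral sequence from \cite[Theorem 12.1]{MCC00} and reads off its five-term exact sequence. You supply the construction the paper outsources to that reference — the adjunction $\mathrm{L}i^*\dashv i_*$, the Koszul computation of $\mathcal{H}^{-q}(\mathrm{L}i^*i_*F)\cong F\otimes\wedge^q N_{Y/X}^{\vee}$ (valid since the Tor sheaves are local and the Koszul differentials vanish after tensoring with an $\mathcal{O}_Y$-module), and the identification $E_2^{p,1}\cong\Ext^p_Y(F,G\otimes N_{Y/X})$ via local freeness of $N_{Y/X}$ — all of which is sound.
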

\begin{proof}
This is the base change spectral sequence in \cite[Theorem 12.1]{MCC00}.
\end{proof}

\begin{proposition}
Let $C$ be a smooth conic in $X$. Then the extension class is
\[
\dim \Ext_X^1(\cO_C,\cO_C(-p))=0
\]
for some $p\in C$. Hence there does not exist a double structure on a smooth conic in $X$.
\end{proposition}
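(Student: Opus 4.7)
The plan is to invoke Lemma \ref{nestedseq} with $Y = C$, $F = \cO_C$, and $G = \cO_C(-p)$, reducing the computation of $\Ext^1_X(\cO_C, \cO_C(-p))$ to cohomology of line bundles on $C \cong \PP^1$. First I would recall from Proposition \ref{conicsinx} that every smooth conic $C \subset X$ has normal bundle $N_{C/X} \cong \cO_C \oplus \cO_C$, since the only other points of $\bH_2(X)$ parameterize planar double lines and so do not arise here.

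With this description of $N_{C/X}$ in hand, the two Ext groups on $C$ flanking the global Ext in \eqref{thomas2} become
\[
\Ext^1_C(\cO_C, \cO_C(-p)) = H^1(\PP^1, \cO(-1)) = 0,
\]
\[
\Hom_C(\cO_C, \cO_C(-p) \otimes N_{C/X}) = H^0(\PP^1, \cO(-1))^{\oplus 2} = 0.
\]
The exactness of \eqref{thomas2} then squeezes $\Ext^1_X(\cO_C, \cO_C(-p))$ between two zero terms, forcing its vanishing. This gives the first assertion of the proposition for any choice of $p \in C$.

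For the corollary about double structures, I would argue as follows. A locally Cohen--Macaulay double structure $C' \supset C$ inside $X$ with Hilbert polynomial $4m+1$ sits in a short exact sequence
\[
\ses{\mathcal{L}}{\cO_{C'}}{\cO_C}
\]
for some line bundle $\mathcal{L}$ on $C \cong \PP^1$, and the genus condition $p_a(C')=0$ together with $\chi(\cO_C)=1$ forces $\chi(\mathcal{L}) = 0$, i.e.\ $\mathcal{L} \cong \cO_C(-p)$ for some $p \in C$. Such non-split extensions in $\mathrm{Coh}(X)$ are classified by $\Ext^1_X(\cO_C, \cO_C(-p))$, which has just been shown to vanish. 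Hence every such extension splits and no non-trivial double structure on a smooth conic exists in $X$.

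There is no serious obstacle beyond applying the spectral sequence of Lemma \ref{nestedseq}: once the normal bundle of a smooth conic has been identified, both neighbouring terms vanish immediately from Bott vanishing on $\PP^1$, and the translation between extension classes and scheme-theoretic double structures is the standard one for a CM-multiplicity-two sheaf on a smooth rational curve inside a smooth threefold.
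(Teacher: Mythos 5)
Your proposal is correct and follows essentially the same route as the paper: both apply Lemma \ref{nestedseq} with $F=\cO_C$, $G=\cO_C(-p)$, use $N_{C/X}\cong\cO_C\oplus\cO_C$ from Proposition \ref{conicsinx} to see that both flanking terms $H^1(\PP^1,\cO(-1))$ and $H^0(\PP^1,\cO(-1))^{\oplus 2}$ vanish, and then identify the kernel of $\cO_{C^2}\twoheadrightarrow\cO_C$ as $\cO_C(-p)$ via the Hilbert polynomial. The only cosmetic difference is that the paper rules out the split extension by invoking the stability of $\cO_{C^2}$ (Proposition \ref{stabr}), where you appeal to the standard fact that a genuine double structure gives a non-split extension; both are fine.
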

\begin{proof}
By Proposition \ref{conicsinx} and Lemma \ref{nestedseq},
\[
0=\rH^1(O_C(-p))=\Ext_C^1(\cO_C,\cO_C(-p))\lr \Ext_X^1(\cO_C,\cO_C(-p)) \lr \rH^0(N_{C/V}\otimes O_C(-p))=0
\]
we proved the claim. Let $C^2$ be a double structure on $C$ with Hilbert polynomial $4m+1$. Let $\cO_{C^2}\twoheadrightarrow \cO_C$ be the restriction map. The kernel $\cK$ of the restriction map is torsion-free and thus it is locally free of rank one on $C$. Thus it is of the form $\cK\cong \cO_C(-p)$. Clearly, it is not a split type because of the stability of the structure sheaf $\cO_{C^2}$ (Proposition \ref{stabr}).
\end{proof}

\begin{proposition}\label{l2q}
Let $C=L^2\cup Q$ be a quartic curve with a double structure on the line $L$ and a smooth conic $Q$. 
\begin{enumerate}
\item Then $C$ is unique. 
\item Furthermore, the Hilbert scheme $\bH_4(X)$ is smooth at $[C]$ with weights $6$, $2$, $2$, $-2$.
\end{enumerate}
\end{proposition}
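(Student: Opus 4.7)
The plan follows the strategy of Propositions \ref{mult4line} and \ref{req}: first identify $C$ uniquely via its structure sheaf filtration, and then verify smoothness by a local tangent space computation.

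For part (1), I would first observe that the planar double line $L^2$ supported on the $\CC^*$-fixed line $L$ is classified by $\Ext^1(\cO_L, \cO_L(-1)) \cong \rH^0(N_{L/X}(-1))$, which is one-dimensional by Proposition \ref{linesinx}(2). Together with the uniqueness of the $\CC^*$-fixed smooth conic $Q$ in \eqref{fixedcon}, this forces $L^2$ and $Q$ to be unique, and they meet at a single $\CC^*$-fixed point $p \in \{p_{10}, p_{-10}\}$. A Hilbert polynomial count then yields a short exact sequence
\[
0 \lr \cO_L(-1) \lr \cO_C \lr \cO_{L \cup Q} \lr 0,
\]
so that $C$ is classified by a class in $\Ext^1(\cO_{L \cup Q}, \cO_L(-1))$. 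Applying $\Hom(-, \cO_L(-1))$ to the structure sequence $0 \to \cO_{L \cup Q} \to \cO_L \oplus \cO_Q \to \cO_p \to 0$, invoking Lemma \ref{nestedseq} to transfer the relevant Ext-groups from the curves to $X$, and using the stability of $\cO_C$ (Proposition \ref{stabr}) to exclude split extensions, the extension class, and hence $C$, is forced to be unique.

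For part (2), I would pass to the local chart $V_{10}$ around $p_{10}$ in the coordinates $b_i$ introduced in Section \ref{31}. The ideal $I_{L^2}$ is obtained from the unique nonzero class in $\Ext^1(\cO_L, \cO_L(-1))$, while $I_Q$ is read off from \eqref{fixedcon}; their intersection produces $I_C$ explicitly. A Macaulay2-aided computation then yields $\Hom(I_C, \cO_C)$ as a vector space with four generators, and the $\CC^*$-weights of these generators are read from the monomial content in the local coordinates, giving $6, 2, 2, -2$. Since the dimension $4$ matches the expected dimension of $\bH_4(X)$, the Hilbert scheme is smooth at $[C]$.

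The main obstacle is writing down the ideal $I_{L^2}$ explicitly in the chart. Identifying the \emph{doubling plane} -- equivalently, the unique nonzero class in the one-dimensional $\Ext^1(\cO_L, \cO_L(-1))$ -- and translating this cohomological datum into explicit polynomial generators in the $b_i$ is the nontrivial step. Once this ideal is in place, the $\Hom$-module calculation and the reading of $\CC^*$-weights proceed mechanically, by direct analogy with the arguments for Propositions \ref{deftrip}, \ref{smlq}, and \ref{mult4line}.
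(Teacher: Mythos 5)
Your proposal matches the paper's argument in both structure and substance: part (1) is proved by classifying $C$ via the extension $0\to\cO_L(-1)\to\cO_C\to\cO_{L\cup Q}\to 0$ and showing the space of such extensions is one-dimensional (the paper reads this off directly as $\rH^0(N_{L\cup Q/X}|_{L}(-1))=\CC$ using \eqref{lqn}, which is a cleaner packaging of the same normal-bundle data than your $\Ext$-group bookkeeping through Lemma \ref{nestedseq}), and part (2) is exactly the paper's computation of $I_C$ as the intersection of the double-line ideal with $I_Q$ in the chart $V_{10}$, followed by a computation of $\Hom(\cI_C,\cO_C)$ and its $\CC^*$-weights. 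The ``main obstacle'' you flag is resolved in the paper simply by writing the double-line ideal explicitly as $\langle b_1, b_2, b_3, b_4, b_6, b_7, b_8, b_{11}, b_{12}, 6b_5-b_{10}, b_{10}^2\rangle$ in $V_{10}^{\circ}$, which yields $I_C=\langle b_4, b_{11}, b_8b_9\rangle$ on $V_{10}$.
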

\begin{proof}
(1): The curve $C$ is determined by the extension class $F$ which fits into $\ses{\cO_L(-1)}{F}{\cO_{L\cup Q}}$. But since $\rH^0(N_{L\cup Q/X}|_{L}(-1))=\CC$, we have a unique such a curve $C$.

(2): Let us define the ideal of $C$ by the union of the double line and conic. Since the planes containing the double line (defined by the ideal $\langle b_1, b_2, b_3, b_4, b_6, b_7, b_8, b_{11}, b_{12}, 6b_5-b_{10}, b_{10}^2\rangle$ in $V_{10}^\circ$)  and the conic (defined in equation \eqref{fixedcon}) meets at point, it is enough to intersect two ideals. After all, over the open chart $V_{10}$ it is generated by $\langle b_4, b_{11}, b_8b_9\rangle$. Then the deformation space is generated the three columns with respect to this basis:
\[
\begin{pmatrix}
0&0&0&0\\
0&0&b_{10}&b_9\\
b_{10}&b_8&0&0
\end{pmatrix}.
\]
One can easily see that this space can be extended into the open chart $V_{12}$ and $V_{-10}$. By weight computation, we proved the claim.
\end{proof}
One can compute the Poincar\'e polynomial of the Hilbert scheme $\bH_4(X)$ by using Theorem \ref{prop:onetoone}. In general, let $\bM$ be a $\CC^*$-invariant closed subscheme in a projective space $\PP^r$. If the space $\bM$ is smooth for each point in the $\CC^*$-fixed locus $\bM^{\CC^*}$, then $\bM$ is a smooth variety.
\begin{proposition}\label{mainpr}
Let $X$ be the Mukai-Umemura variety. Then, $\bH_4(X)$ is smooth and
$P(\bH_4(X))=1+p+2p^2+p^3+p^4$.
\end{proposition}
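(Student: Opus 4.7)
The plan is to apply the Białynicki-Birula decomposition (Theorem~\ref{prop:onetoone}) to $\bH_4(X)$ equipped with the $\CC^*$-action inherited from the $\SL_2$-action on $X$. Three ingredients are needed: a complete enumeration of the $\CC^*$-fixed points, smoothness of $\bH_4(X)$, and the $\CC^*$-weights on the tangent space at each fixed point.

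For the fixed-point enumeration, I would combine Proposition~\ref{cucm} (every connected degree-$4$ genus-$0$ curve in $X$ is Cohen-Macaulay) with the disjointness of distinct lines from Proposition~\ref{linesinx}: this restricts the possible reduced supports to configurations built from the two fixed lines $L_{\pm 2}$, the fixed smooth conic $Q$, and the irreducible fixed quartic $C_4$ of Proposition~\ref{fixdirr4}. A CM-filtration analysis of the type carried out in Propositions~\ref{mult4line} and~\ref{l2q}, together with the exclusion of any double structure on a smooth conic (stated just before Proposition~\ref{l2q}), leaves exactly six $\CC^*$-fixed points: $C_4$; the reduced tree $L_2\cup L_{-2}\cup Q$; the two quadruple-line thickenings on $L_{\pm 2}$; and the two curves $L_{\pm 2}^2\cup Q$. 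For smoothness, Propositions~\ref{irrq}, \ref{req}, \ref{mult4line}, and \ref{l2q}, together with their $L_2\leftrightarrow L_{-2}$ partners, give $\dim T_{[C]}\bH_4(X)=4$ at each fixed point; since the expected dimension $-K_X\cdot C$ equals $4$, $\bH_4(X)$ is smooth of dimension $4$ at every fixed point, and the general principle quoted just before the proposition (a $\CC^*$-invariant projective subscheme smooth at all its $\CC^*$-fixed points is globally smooth) propagates smoothness to all of $\bH_4(X)$.

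For the Poincar\'e polynomial, Białynicki-Birula decomposes $\bH_4(X)$ into affine cells indexed by the fixed points $[C]$, each of dimension $\mu^+([C])$ equal to the number of strictly positive $\CC^*$-weights on $T_{[C]}\bH_4(X)$. Reading the weights recorded in the preceding propositions (and applying the symmetry $t\mapsto t^{-1}$ to get the $L_{-2}$-side partners):
\[
\begin{array}{lcc}
\text{fixed point} & \text{tangent weights} & \mu^+\\
\hline
C_4 & (4,2,-2,-4) & 2\\
L_2\cup L_{-2}\cup Q & (4,2,-2,-4) & 2\\
\text{mult-}4\text{ on }L_2 & (6,4,4,2) & 4\\
\text{mult-}4\text{ on }L_{-2} & (-6,-4,-4,-2) & 0\\
L_2^2\cup Q & (6,2,2,-2) & 3\\
L_{-2}^2\cup Q & (-6,-2,-2,2) & 1
\end{array}
\]
and summing $p^{\mu^+}$ yields $P(\bH_4(X))=1+p+2p^2+p^3+p^4$.

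The main obstacle is the completeness of the fixed-point enumeration in step one: ruling out further exotic CM multiple structures, such as non-planar thickenings on a single fixed line, triple lines glued to extra components, or unexpected thickenings of $Q$ or $C_4$, requires the CM-filtration bookkeeping of Propositions~\ref{mult4line} and~\ref{l2q}, the stability constraint of Proposition~\ref{stabr}, and the fact that $X$ contains neither a plane nor a double smooth conic. Once this enumeration is pinned down, the rest of the argument is a mechanical weight count.
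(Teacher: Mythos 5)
Your proposal is correct and follows essentially the same route as the paper: the paper's own proof of Proposition~\ref{mainpr} simply cites Propositions~\ref{irrq}, \ref{req}, \ref{mult4line}, \ref{l2q} for smoothness at the fixed points and then invokes Theorem~\ref{prop:onetoone} together with the $L_2\leftrightarrow L_{-2}$ symmetry, which is exactly the weight table and cell count you write out. Your tally of the six fixed points and the sum $p^0+p^1+2p^2+p^3+p^4$ matches the paper's computation.
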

\begin{proof}
From Proposition \ref{irrq}, \ref{req}, \ref{mult4line}, \ref{l2q}, we know that $\bH_4(X)$ is smooth. Hence we obtain the Poincar\'e polynomial of $\bH_4(X)$ by Theorem \ref{prop:onetoone} and symmetry.
\end{proof}

\bibliographystyle{alpha}
\bibliography{Library.bib}

\end{document}